\documentclass[oneside,letterpaper,11pt]{amsart}

\usepackage[top=1in,bottom=1in,left=1.55in,right=1.55in]{geometry}
\usepackage[T1]{fontenc}
\usepackage[utf8]{inputenc}

\usepackage{amssymb}
\usepackage{microtype}
\usepackage{graphicx}
\usepackage{enumitem}
\usepackage{mathtools}

\usepackage{comment}

\newtheorem{theorem}{Theorem}[section]
\newtheorem{corollary}[theorem]{Corollary}
\newtheorem{prop}[theorem]{Proposition}
\newtheorem{lemma}[theorem]{Lemma}
\newtheorem*{lemma*}{Lemma}
\newtheorem*{corollary*}{Corollary}

\newcommand{\introthmname}{}
\newtheorem{introthminn}{\introthmname}
\newenvironment{introthm}[1]
  {\renewcommand{\introthmname}{#1}\begin{introthminn}}
  {\end{introthminn}}
  
\newcommand{\adhocrmkname}{}
\newtheorem{adhocrmkinn}[theorem]{\adhocrmkname}
\newenvironment{adhocrmk}[1]
  {\renewcommand{\adhocrmkname}{#1}\begin{adhocrmkinn}}
  {\end{adhocrmkinn}}

\theoremstyle{definition}
\newtheorem{definition}[theorem]{Definition}

\theoremstyle{remark}
\newtheorem{remark}[theorem]{Remark}

\usepackage{accents}

\usepackage{rsfso}
\pdfmapfile{+rsfso.map}
\usepackage{bm}

\newcommand{\RR}{\mathbb{R}} 
\newcommand{\CC}{\mathbb{C}} 
\newcommand{\RP}{\mathbb{RP}} 
\newcommand{\CP}{\mathbb{CP}} 
\newcommand{\ZZ}{\mathbb{Z}} 
\newcommand{\QQ}{\mathbb{Q}} 
\newcommand{\PP}{\mathbb{P}} 

\newcommand{\KL}{\mathbf{L}}

\newcommand{\HL}[1]{H^2(#1;\ZZ)}

\newcommand{\U}{\mathrm{U}}

\newcommand{\br}[1]{\ensuremath{\langle#1\rangle}}
\newcommand{\ct}[2]{\ensuremath{\langle#2\sqcup1\langle#1\rangle\rangle}}

\DeclareMathOperator{\conj}{conj}   
\DeclareMathOperator{\im}{im}       
\DeclareMathOperator{\bl}{bl}       
\DeclareMathOperator{\tr}{tr}       
\DeclareMathOperator{\discr}{discr} 
\DeclareMathOperator{\Hom}{Hom}     

\newcommand{\PD}{\mathrm{PD}}                
\newcommand{\OH}{\Omega_{h,\sigma}}        
\newcommand{\HTp}{{h,\sigma,\phi}}       
\newcommand{\GHp}{\mathrm{O}(\KL,\HTp)}      
\newcommand{\OHp}{\Omega_\HTp}      
\newcommand{\PGL}{\mathrm{PGL}}     
\newcommand{\LL}{\mathcal{L}}       
\newcommand{\I}{\mathrm{i}}         

\newcommand\os{\hat{s}}
\newcommand{\ncr}{\ensuremath{r}}
\renewcommand{\epsilon}{\varepsilon}
\newcommand{\orth}{\mathbin{\bot} }

\newcommand{\ie}{\emph{i.e.}\ }
\newcommand{\iec}{\emph{i.e.,} }

\usepackage{xcolor}

\renewcommand{\emptyset}{\varnothing}

\usepackage{amsrefs}
\renewcommand{\eprint}[1]{\tt arXiv:\linebreak[0]\href{http://arxiv.org/abs/#1}{\path{#1}}}

\usepackage{tikz}
\usepackage{tikz-cd}
\usetikzlibrary{arrows}

\tikzset{
    vec/.style={
        circle,
        fill,
        inner sep=1pt
    },
    every label/.style={
        rectangle,
        inner sep=0,
        label distance=1.3pt
        }
}

\usepackage{hyperref}
\hypersetup{colorlinks,citecolor=black,filecolor=black,linkcolor=black}

\title{Real nodal sextics without real nodes}
\author{Johannes Josi}

\address{%
Université de Genève
\textup{and} 
Institut de Mathématiques de Jussieu--Paris Rive Gauche}
\curraddr{Section de mathématiques\\
Université de Genève\\
Villa~Battelle\\
Route de Drize~7\\
1227~Carouge, Switzerland}
\email{johannes.josi@unige.ch}
\subjclass[2010]{14P25 (primary), 14H50, 14J28}

\begin{document}
\thispagestyle{empty}

\begin{abstract}
We present a rigid isotopy classification of irreducible sextic curves in $\RP^2$
which have non-real ordinary double points as their only singularities.
Our approach uses periods of K3 surfaces and V.~Nikulin's classification
of ``involutions with condition'' on unimodular lattices.
The classification obtained generalizes Nikulin's rigid isotopy classification
of non-singular sextics in $\RP^2$.
\end{abstract}

\maketitle

\section*{Introduction}\label{sec:intro}

In 1979, V. Nikulin~\cite{Nikulin79} classified real non-singular sextics
(curves of degree six) in $\RP^2$ up to rigid isotopies,
that is, up to deformations in the space of real non-singular sextics.
The aim of this article is to generalize this classification to
real irreducible sextics whose only singularities are non-real
ordinary double points.
For such sextics, rigid isotopy means deformation within the space of real
nodal sextics with a fixed number of nodes.

\subsection*{Statement of the results}\label{sec:intro.results}
We associate the following invariants with a real irreducible sextic $C\subset\CP^2$
whose only singularities are $m$ pairs of non-real nodes:
\begin{itemize}
\item \emph{Isotopy type of the real part.}
Since all the nodes are non-real, the real part
$\RR C\subset \RP^2$ is a collection of smoothly embedded circles, called \emph{ovals}.
The way these ovals are disposed in $\RP^2$ is invariant under rigid isotopies.
\item \emph{Dividing type.}
Recall that a real non-singular curve $C$ is called \emph{dividing} or
\emph{of dividing type~\textup{I}%
\footnote{In the literature this is often simply called the \emph{type}
of a curve, but this might lead to confusion here since we also use other notions
of type, such as the homological type of a curve.}
} if $C\setminus\RR C$ consists of two connected components, and
\emph{of dividing type~\textup{II}} if $C\setminus\RR C$ is connected.
We extend this notion to nodal curves by declaring that a nodal curve $C$
is dividing if its normalization $\tilde C$ is dividing
with respect to the inherited real structure.
If $C$ is a non-singular dividing curve, then the two connected components of
$C\setminus\RR C$ are called \emph{halves} of the curve.
\item \emph{Number of crossing pairs \textup{(only defined for dividing curves)}.}
A non-real node of a dividing curve is called \emph{crossing} if the
two branches intersecting at the node belong to different halves of the normalization of the curve.
A non-real node is crossing if and only if this is the case for its
complex conjugate node, so that we can speak about \emph{crossing}
and \emph{non-crossing pairs} of non-real nodes.
\end{itemize}
These three characteristics are invariant under rigid isotopies.
Our first theorem states that they determine the rigid isotopy class.

\begin{introthm}{Theorem}\label{thm:curve-uniqueness}
Two real irreducible sextics with $m$ pairs of non-real nodes
are rigidly isotopic if and only if their real
parts are isotopic, they are of the same dividing type
and, if they are dividing, they have the same number of crossing pairs.
\end{introthm}

By comparison, Nikulin's theorem states that non-singular real sextics
are classified up to rigid isotopy by their isotopy type and their dividing type.

The second question we consider is which combinations of isotopy type,
dividing type and number of crossing pairs are realized by real
irreducible sextics $C$ with $m$ pairs of non-real nodes.
There are several geometric conditions which must be satisfied.
\begin{enumerate}
\item \emph{Existence of the smoothing.}
Smoothing the non-real nodes leads to a non-singular real sextic
whose real part has the same isotopy type as the original curve.
The smoothed curve is dividing if and only if the original curve is
dividing and without crossing pairs.
Therefore, a sextic with prescribed isotopy type, dividing type and number 
of crossing nodes can only exist if there is a non-singular sextic
with the same isotopy type and the appropriate dividing type.
\item \emph{Harnack's inequality for $\tilde C$.}
The normalization $\tilde C$ is a smooth real curve of genus $g=10-2m$.
Harnack's inequality
(see Harnack~\cite{Harnack} and Klein~\cite{Klein}*{p.~154} for a topological proof)
for $\tilde C$ states that $l \le g+1 = 11-2m$,
where $l$ denotes the number of ovals of $\RR C$.
Moreover, the equality $l = 11 - 2m$ is only possible if $C$ is dividing.
\end{enumerate}

If $C$ is dividing, then there are further restrictions coming from
Rokhlin's complex orientation formula (see~\cite{Rokhlin}).
The orientation of each of the halves of $\tilde C$ induces a boundary orientation
on the real part $\RR C$. A pair of ovals in $\RR C$ is called \emph{injective} if
one oval is contained in the disk bounded by the other oval.
An injective pair is called \emph{positive} if the orientation of the two ovals
is induced by an orientation of the annulus bounded by these ovals, and
\emph{negative} otherwise. Let $\Pi_+$ and $\Pi_-$ denote the numbers of
positive and negative injective pairs, respectively.
In our case, Rokhlin's complex orientation formula states that
\[
2(\Pi_- - \Pi_+) + l = 9 - 2\ncr,
\]
where $l$ is the number of ovals and $\ncr$ is the number of crossing pairs.
This implies the following relations between the
topology of the real part and the number $\ncr$ of crossing pairs for dividing curves:
\begin{enumerate}[resume*]
\item \emph{Arnold's congruence.}
An oval is called \emph{even} if it lies inside an even number of
other ovals, and \emph{odd} otherwise. Arnold's congruence states that
\[o_\mathrm{even}-o_\mathrm{odd} \equiv 9 - 2\ncr \mod{4},\]
where $o_\mathrm{even}$ and $o_\mathrm{odd}$ denote the numbers of even and odd ovals, respectively.
\item\emph{Restriction for curves without injective pairs.}
For curves without injective pairs, the complex orientation
formula implies
\[
l = 9 - 2\ncr.
\]
\end{enumerate} 

Our second theorem states that these conditions are sufficient for
the existence of a real sextic of prescribed isotopy type, dividing type
and number of crossing pairs.

\begin{introthm}{Theorem}\label{thm:curve-existence}
A triple consisting of an isotopy type, a dividing type and a number of crossing pairs (if the dividing type is I)
is realized by a real irreducible sextic whose only singularities are $m$ pairs of non-real nodes
if and only if it satisfies the conditions \textup{(1)--(4)} above.
\end{introthm}

\begin{corollary*}
The rigid isotopy classes of real irreducible nodal sextics without real nodes
are those listed in Figures~\ref{fig:typeI} and \ref{fig:typeII}.
In total, there are $78$ classes of dividing sextics and $125$ classes
of non-dividing sextics.
\end{corollary*}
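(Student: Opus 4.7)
The plan is to combine Theorems~\ref{thm:curve-uniqueness} and~\ref{thm:curve-existence} with Nikulin's rigid isotopy classification of non-singular real sextics to reduce the corollary to a finite combinatorial enumeration. By Theorem~\ref{thm:curve-uniqueness}, each rigid isotopy class corresponds to a triple $(I,D,\ncr)$, where $I$ is the isotopy type of the real part, $D\in\{\mathrm{I},\mathrm{II}\}$ is the dividing type, and $\ncr$ is the number of crossing pairs (only defined when $D=\mathrm{I}$); by Theorem~\ref{thm:curve-existence}, such a triple is realised precisely when it satisfies conditions~(1)--(4). It therefore suffices to list all admissible triples and verify that they coincide with the entries of Figures~\ref{fig:typeI} and~\ref{fig:typeII}, whose totals are $78$ and $125$.

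I would organise the enumeration by the underlying non-singular datum $(I,D)$: condition~(1) forces this pair to appear in Nikulin's classification, which I would take as input. For each such $(I,D)$, condition~(2) (Harnack's inequality $l \le 11-2m$, with equality only when $D=\mathrm{I}$) determines the admissible values of $m\in\{0,\dots,5\}$. In the non-dividing case $D=\mathrm{II}$, each admissible $m$ yields exactly one class, contributing to the count of $125$. In the dividing case $D=\mathrm{I}$, I would then, for every admissible $(I,m)$, enumerate the integers $0\le \ncr\le m$ compatible with Arnold's congruence~(3), namely $o_{\mathrm{even}} - o_{\mathrm{odd}} \equiv 9-2\ncr \pmod{4}$; and when $I$ has no injective pair of ovals, condition~(4) pins $\ncr$ down to the unique value $(9-l)/2$, discarding the combination if this value falls outside $\{0,\dots,m\}$.

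The main obstacle is not conceptual but bookkeeping: the case analysis must be carried out carefully for real schemes with many nested ovals, where Arnold's congruence cuts off different residue classes depending on the parity structure, and for schemes without injective pairs, where condition~(4) can exclude entire values of $m$. Once the resulting lists have been produced and matched term by term against Figures~\ref{fig:typeI} and~\ref{fig:typeII}, summing the entries yields the stated totals $78$ and $125$.
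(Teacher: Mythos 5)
Your overall strategy coincides with the paper's: the corollary carries no separate proof precisely because it is meant to follow from Theorems~\ref{thm:curve-uniqueness} and~\ref{thm:curve-existence} by a finite enumeration of admissible triples, anchored in Nikulin's classification of non-singular sextics and filtered by Harnack's inequality, Arnold's congruence and the complex orientation formula. Your handling of conditions (2)--(4) is correct (in particular the bounds $m\le\lfloor(11-l)/2\rfloor$ resp.\ $\lfloor(10-l)/2\rfloor$, and the value $\ncr=(9-l)/2$ for schemes without injective pairs), and so is the observation that in the non-dividing case each admissible $(I,m)$ gives exactly one class.

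There is, however, a genuine error in your use of condition (1). You assert that it forces the pair $(I,D)$ --- with $D$ the dividing type of the \emph{nodal} curve --- to appear in Nikulin's classification, and you anchor the enumeration of dividing nodal classes to the non-singular \emph{dividing} schemes. But condition (1) concerns the smoothing, and the smoothing is dividing if and only if the nodal curve is dividing \emph{and has no crossing pairs}; for a dividing nodal sextic with $\ncr>0$ the non-singular datum whose existence is required is $(I,\mathrm{II})$, not $(I,\mathrm{I})$. This breaks the enumeration in both directions. On one hand you miss every dividing class whose scheme admits no non-singular dividing representative: for instance $\langle 1\sqcup 1\langle 7\rangle\rangle$ is not realizable by a non-singular dividing sextic (Rokhlin's formula with $l=9$ forces $\Pi_-=\Pi_+$, impossible since the scheme has $7$ injective pairs), yet Figure~\ref{fig:typeI} lists it as a dividing nodal class with $\ncr=1$; the same happens for $\langle 7\rangle$, $\langle 1\langle 2\rangle\rangle$, $\langle 1\rangle$, etc., and these account for $36$ of the $78$ dividing classes. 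On the other hand you spuriously include the nest $\langle 1\langle 1\langle 1\rangle\rangle\rangle$ with $\ncr=2$ or $4$ (Arnold's congruence only forces $\ncr$ to be even there), which must be discarded because the deep nest has no non-singular non-dividing realization. The repair is mechanical --- enumerate dividing nodal classes with $\ncr=0$ over Nikulin's type~I schemes and those with $\ncr>0$ over his type~II schemes --- but as written your procedure would not return the totals $78$ and $125$.
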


To describe the isotopy type of the real part, also called real scheme,
we use Viro's notation:
the symbol $\br{n}$ stands for a collection of $n$ empty ovals;
$\br{1\br{A}}$ is obtained from $A$ by adding an oval containing
all of ovals $A$ in its interior; if $A$ and $B$ are collections of ovals,
then $\br{A\sqcup B}$ denotes the  disjoint union of $A$ and $B$,
such that no oval of $A$ lies in the interior of an oval of $B$ and vice versa.

\subsection*{Organization of the paper}
In Section~\ref{sec:periods}, we review the construction of the K3~surface obtained as a
double covering of $\CP^2$ ramified along a sextic curve $C\subset\CP^2$,
first in the complex case and then in the real case.
This allows us to associate certain homological data, called
the \emph{homological type}, with each real nodal sextic.
We introduce the period map for (suitably marked, polarized) real K3 surfaces
and prove that rigid isotopy classes of real sextics of a fixed homological type
are in bijection with connected components of the corresponding period space,
modulo the action of the orthogonal group of the homological type (Proposition~\ref{prop:period_ri}).

In Section~\ref{sec:quasisimple} we show that, for real nodal irreducible
sextics without real nodes, the aforementioned orthogonal group acts
transitively on the connected components of the corresponding period space (Proposition~\ref{prop:reflections}).
Therefore, such sextics are rigidly isotopic if and only if they have the same
homological type.

In Section~\ref{sec:topol} we establish a correspondence between certain
topological properties of sextics on one hand and arithmetical properties
of their homological types on the other hand.
In particular, we show how one can tell from the homological type whether
a sextic is dividing or not,
and we study how the homological type changes when a pair of non-real nodes is perturbed.

Section~\ref{sec:classif}, which is the most technical part of the paper,
is devoted to the classification of the homological types of real irreducible sextics without real nodes.
We define a set of numerical invariants which completely determines these homological types (Theorem~\ref{thm:unique}),
and we give necessary and sufficient conditions for the existence of a homological type
in terms of these invariants  (Theorem~\ref{thm:conditions}).
The proof of these two theorems is based on Nikulin's classification of
``involutions with condition''~\cite{Nikulin83}.

Finally, in Section~\ref{sec:thm12}, we establish a complete dictionary between
the topological invariants mentioned in the introduction and the arithmetical invariants
introduced in Section~\ref{sec:classif}.
This allows us to prove Theorems~\ref{thm:curve-uniqueness} and \ref{thm:curve-existence}.

\begin{figure}[hb]
\makebox[\textwidth][c]{
\begin{tikzpicture}[x=.7cm,y=.55cm]
\tikzset{
    every label/.style={
        rectangle,
        inner sep=0,
        label distance=1.3pt
        }
}
\Small
\node at (-1,12) () {$m_{\text{max}}$};

\node at (-1,11) () {$0$};
\node[label=south:{$\{0\}$}] at ( 1,11) () {\ct{9}{1}};
\node[label=south:{$\{0\}$}] at ( 9,11) () {\ct{5}{5}};
\node[label=south:{$\{0\}$}] at (17,11) () {\ct{1}{9}};

\node at (-1,9) () {$1$};
\node[label=south:{$\{0\}$}] at ( 1,9) () {\br{1\br{8}}};
\node[label=south:{$\{1\}$}] at ( 3,9) () {\ct{7}{1}};
\node[label=south:{$\{0\}$}] at ( 5,9) () {\ct{6}{2}};
\node[label=south:{$\{1\}$}] at ( 7,9) () {\ct{5}{3}};
\node[label=south:{$\{0\}$}] at ( 9,9) () {\ct{4}{4}};
\node[label=south:{$\{1\}$}] at (11,9) () {\ct{3}{5}};
\node[label=south:{$\{0\}$}] at (13,9) () {\ct{2}{6}};
\node[label=south:{$\{1\}$}] at (15,9) () {\ct{1}{7}};
\node[label=south:{$\{0\}$}] at (17,9) () {\br{9}};

\node at (-1,7) () {$2$};
\node[label=south:{$\{1  \}$}] at ( 3,7) () {\br{1\br{6}}};
\node[label=south:{$\{0,2\}$}] at ( 5,7) () {\ct{5}{1}};
\node[label=south:{$\{1  \}$}] at ( 7,7) () {\ct{4}{2}};
\node[label=south:{$\{0,2\}$}] at ( 9,7) () {\ct{3}{3}};
\node[label=south:{$\{1  \}$}] at (11,7) () {\ct{2}{4}};
\node[label=south:{$\{0,2\}$}] at (13,7) () {\ct{1}{5}};
\node[label=south:{$\{1  \}$}] at (15,7) () {\br{7}};

\node at (-1,5) () {$3$};
\node[label=south:{$\{0,2\}$}] at ( 5,5) () {\br{1\br{4}}};
\node[label=south:{$\{1,3\}$}] at ( 7,5) () {\ct{3}{1}};
\node[label=south:{$\{0,2\}$}] at ( 9,5) () {\ct{2}{2}};
\node[label=south:{$\{1,3\}$}] at (11,5) () {\ct{1}{3}};
\node[label=south:{$\{2  \}$}] at (13,5) () {\br{5}};

\node at (-1,3) () {$4$};
\node[label=south:{$\{1,3\}$}] at ( 7,3) () {\br{1\br{2}}};
\node[label=south:{$\{2,4\}$}] at ( 9,3) () {\ct{1}{1}};
\node[label=south:{$\{3  \}$}] at (11,3) () {\br{3}};

\node[label=south:{$\{0  \}$}] at (16,3) () {\br{1\br{1\br{1}}}};

\node at (-1,1) () {$5$};
\node[label=south:{$\{4  \}$}] at ( 9,1) () {\br{1}};
\end{tikzpicture}
}
\caption{Rigid isotopy classes of dividing real nodal sextics
with $m$ pairs of non-real nodes of which $\ncr$ are crossing.
The set below each real scheme indicates the possible values for the number of crossing pairs $\ncr$.
The total number of pairs $m$ can take any value between $\ncr$ and the upper bound $m_\text{max}$
indicated on the left of each row.}
\label{fig:typeI}
\end{figure}
\begin{figure}[hb]
\makebox[\textwidth][c]{
\begin{tikzpicture}[x=.7cm,y=.55cm]
\Small
\node at (-2,11) () {$m_{\text{max}}$};

\node at (-2,10) () {$0$};
\node at ( 0,10) () {\br{1\br{9}}};
\node at ( 2,10) () {\ct{8}{1}};
\node at ( 8,10) () {\ct{5}{4}};
\node at (10,10) () {\ct{4}{5}};
\node at (16,10) () {\ct{1}{8}};
\node at (18,10) () {\br{10}};

\node at (-2,9) () {$0$};
\node at ( 1,9) () {\br{1\br{8}}};
\node at ( 3,9) () {\ct{7}{1}};
\node at ( 7,9) () {\ct{5}{3}};
\node at ( 9,9) () {\ct{4}{4}};
\node at (11,9) () {\ct{3}{5}};
\node at (15,9) () {\ct{1}{7}};
\node at (17,9) () {\br{9}};

\node at (-2,8) () {$1$};
\node at ( 2,8) () {\br{1\br{7}}};
\node at ( 4,8) () {\ct{6}{1}};
\node at ( 6,8) () {\ct{5}{2}};
\node at ( 8,8) () {\ct{4}{3}};
\node at (10,8) () {\ct{3}{4}};
\node at (12,8) () {\ct{2}{5}};
\node at (14,8) () {\ct{1}{6}};
\node at (16,8) () {\br{8}};

\node at (-2,7) () {$1$};
\node at ( 3,7) () {\br{1\br{6}}};
\node at ( 5,7) () {\ct{5}{1}};
\node at ( 7,7) () {\ct{4}{2}};
\node at ( 9,7) () {\ct{3}{3}};
\node at (11,7) () {\ct{2}{4}};
\node at (13,7) () {\ct{1}{5}};
\node at (15,7) () {\br{7}};

\node at (-2,6) () {$2$};
\node at ( 4,6) () {\br{1\br{5}}};
\node at ( 6,6) () {\ct{4}{1}};
\node at ( 8,6) () {\ct{3}{2}};
\node at (10,6) () {\ct{2}{3}};
\node at (12,6) () {\ct{1}{4}};
\node at (14,6) () {\br{6}};

\node at (-2,5) () {$2$};
\node at ( 5,5) () {\br{1\br{4}}};
\node at ( 7,5) () {\ct{3}{1}};
\node at ( 9,5) () {\ct{2}{2}};
\node at (11,5) () {\ct{1}{3}};
\node at (13,5) () {\br{5}};

\node at (-2,4) () {$3$};
\node at ( 6,4) () {\br{1\br{3}}};
\node at ( 8,4) () {\ct{2}{1}};
\node at (10,4) () {\ct{1}{2}};
\node at (12,4) () {\br{4}};

\node at (-2,3) () {$3$};
\node at ( 7,3) () {\br{1\br{2}}};
\node at ( 9,3) () {\ct{1}{1}};
\node at (11,3) () {\br{3}};

\node at (-2,2) () {$4$};
\node at ( 8,2) () {\br{1\br{1}}};
\node at (10,2) () {\br{2}};

\node at (-2,1) () {$4$};
\node at ( 9,1) () {\br{1}};

\node at (-2,0) () {$5$};
\node at ( 8,0) () {$\emptyset$};
\end{tikzpicture}
}
\caption{Rigid isotopy classes of non-dividing real nodal sextics
with $m$ pairs of non-real nodes.
The number of pairs of non-real nodes $m$ can take any value between 0
and the upper bound $m_\text{max}$ indicated on the left of each row.}
\label{fig:typeII}
\end{figure}

\section{Nodal sextics, K3 surfaces and periods}\label{sec:periods}

In this section we review some facts about K3 surfaces and their periods,
and in particular the bijection between rigid isotopy classes of
real sextics and chambers of the corresponding real period space,
up to the action of a discrete group.
For details about K3 surfaces and their periods in general, we refer to the book
\cite{BPV} by W.~Barth, K.~Hulek, C.~Peters and A.~van der Ven.
For details about ``weakly polarized'' K3 surfaces,
see D.~Morrison's article \cite{MorrisonRemarks}.
The case of K3 surfaces obtained from complex nodal sextics is also treated in
D.~Morrison and M.~Saito's article \cite{MorrisonSaito}.
We borrow most of the notation from A.~Degtyarev~\cite{Degt}.
The passage from the complex to the real case is done in analogy with
similar situations treated by A.~Degtyarev, I.~Itenberg, V.~Kharlamov and V.~Nikulin \cite{Nikulin79,Itenberg,DIK,Kharlamov}.

\subsection{Complex nodal sextics and their periods}\label{sec:periods.complex}
\subsubsection*{The weakly polarized K3 surface obtained from a nodal sextic}
A \emph{K3 surface} is a non-singular, compact complex surface $X$
which is simply connected and has trivial canonical bundle.
Endowed with the intersection pairing, which we denote by $x\cdot y$,
the cohomology group $\HL{X}$ is an even unimodular lattice of signature $(3,19)$.
Note that all even unimodular lattices of signature $(3,19)$ are isometric.

A \emph{weak polarization} of a K3 surface $X$ is a class $h\in H^2(X,\ZZ)$
of a big and nef line bundle on $X$, that is, a class $h$ such that 
$h^2 > 0$ and $h\cdot D \ge 0$ for all irreducible curves $D\subset X$.
A \emph{weakly polarized K3 surface} is a pair $(X,h)$ formed by a K3 surface $X$
and a weak polarization $h$ of $X$.
The \emph{nodal classes} of a weakly polarized K3 surface $(X,h)$ are the
classes $s\in\HL{X}$ of smooth rational curves on $X$ for which $s\cdot h = 0$.
Nodal classes are \emph{roots}, \ie vectors with self-intersection  $-2$.

Let $C\subset\CP^2$ be a reduced (not necessarily irreducible) nodal sextic.
Let $\bl:V\to\CP^2$ be the blow-up of $\CP^2$ in the nodes of $C$,
and let $\tilde C\subset V$ be the strict transform of $C$.
Let $\eta:X\to V$ be the double covering of $V$ branched along $\tilde C$, and
denote the composition $\bl\circ\,\eta$ by $\pi$.
Alternatively, $\pi$ can be obtained by first taking the (singular) double covering
$Y\to\CP^2$ branched along $C$ and then taking the minimal resolution
$X\to Y$,
\ie we have the following commutative diagram:
\begin{center}
\begin{tikzcd}
        &[-30pt]  X\arrow{r}{} \arrow{d}{\eta} \arrow{dr}{\pi} & Y \arrow{d}{} &[-30pt] \\
\tilde C \subset & V\arrow{r}{\bl} & \CP^2& \supset C
\end{tikzcd}
\end{center}

The surface $X$ is a K3 surface.
Let $h \in \HL{X}$ be the first Chern class of the line bundle $\pi^\ast(\mathcal{O}_{\CP^2}(1))$.
Then $(X,h)$ is a weakly polarized K3 surface with $h^2 = 2$, which we call
the \emph{weakly polarized K3 surface obtained from the sextic $C$}.
Let $\sigma\subset \HL{X}$ be its set of nodal classes.
The nodal curves are precisely the fibres of $\pi$ above the nodes of $C$.
Let $\Sigma\subset \HL{X}$ be the sublattice generated by $\sigma$, and
let $S = \Sigma\oplus\ZZ h\subset \HL{X}$.

\begin{prop}[{Morrison and Saito \cite{MorrisonSaito}*{Lemma~4.1}}]\label{prop:irred}
The curve $C$ is irreducible if and only if $S$ is a primitive sublattice of $\HL{X}$.
\end{prop}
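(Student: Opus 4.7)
My plan is to prove the equivalence by pinning down the group $\bar S/S$ in terms of the irreducible components of $C$: each component of $C$ contributes a ``half-class'' in $\bar S$, and conversely every class in $\bar S\setminus S$ must come from such a component.

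\emph{Easy direction (reducible $\Rightarrow$ not primitive).} Suppose $C = C_1 \cup C_2$ with $\deg C_i = d_i \geq 1$. The reduced preimage $R_1 := \eta^{-1}(\tilde C_1)_{\mathrm{red}} \subset X$ lies in the ramification locus, so $\eta^*[\tilde C_1] = 2[R_1]$. Writing $[\tilde C_1] = d_1 H - \sum_p m_p e_p$ in $\Pic(V)$, where $m_p \in \{0,1,2\}$ is the multiplicity of $C_1$ at the node $p$, I obtain
\[
[R_1] \;=\; \tfrac12\bigl(d_1 h - \sum_p m_p s_p\bigr) \,\in\, \HL{X}.
\]
Bezout's theorem gives $C_1 \cdot C_2 = d_1d_2 \geq 1$, and since $C$ is nodal, every intersection of $C_1$ and $C_2$ is a transverse node $p$ with $m_p = 1$. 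Hence $[R_1]$ has at least one odd coefficient, so $[R_1] \in \bar S \setminus S$ and $S$ is not primitive.

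\emph{Converse direction.} Assume $S$ is not primitive. The Gram matrix of $S$ in the basis $(h, s_1,\ldots,s_n)$ is the diagonal form $\br{2} \oplus \br{-2}^{\oplus n}$, so the discriminant group $S^*/S$ is $2$-elementary of order $2^{n+1}$. Consequently $\bar S/S$ is also $2$-elementary and every $v \in \bar S \setminus S$ admits a representative $v = \tfrac12(ah + \sum_{p\in I} s_p)$ with $a \in \{0,1\}$ and $a + |I| \equiv 0 \pmod 4$. I plan to identify $\bar S = \Pic(X)^\tau$, where $\tau$ is the deck involution of $\eta$: $\Pic(X)^\tau$ is primitive in $\HL{X}$, contains $S$, and has the same rational span (because $H^2(X;\QQ)^\tau = \eta^* H^2(V;\QQ) = S \otimes \QQ$). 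The key step is then to prove that every $v \in \bar S \setminus S$ is realized modulo $S$ by an $[R_i]$ coming from a component of $\tilde C$, which is equivalent to showing $\#\bar S/S = 2^{k-1}$ where $k$ is the number of irreducible components of $\tilde C$ (equivalently of $C$). Since the easy direction already provides $2^{k-1}$ independent classes in $\bar S/S$, the nontrivial content is the reverse inequality, forcing $k \geq 2$ as soon as $\bar S/S$ is nontrivial.

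The main obstacle is this reverse inequality $\#\bar S/S \leq 2^{k-1}$. My plan is to realize each $v \in \bar S \setminus S$ as an effective, $\tau$-fixed divisor via a Riemann--Roch argument on the K3 surface $X$: after minimizing $v^2$ within the coset $v + S$ (so that $v^2 \geq -2$), the inequality $v \cdot h = a \geq 0$ combined with the nefness of $h$ rules out $-v$ being effective, so $\chi(L_v) \geq 1$ forces a nonzero section and produces an effective divisor $D$ in the class $v$. One then shows $D$ must be supported on the ramification locus $R = \sum_i R_i$: any $\tau$-symmetric effective divisor not meeting $R$ splits as $D' + \tau D'$ and has class $2[D'] \in 2\HL{X}$, which modulo $S$ lies in the image of multiplication by $2$ on $\bar S/S$, hence is zero since $\bar S/S$ is $2$-torsion. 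Thus $[D] \in \langle [R_i]\rangle$ in $\bar S/S$, completing the argument. An alternative, possibly cleaner route is to analyze directly the cokernel of $\eta^*\colon \Pic(V) \to \Pic(X)^\tau$ via the $\tau$-equivariant structure of the double cover, where the formula $\#\mathrm{coker}(\eta^*) = 2^{k-1}$ is a standard consequence of the splitting of the branch divisor.
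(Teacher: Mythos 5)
First, a point of reference: the paper does not prove this proposition at all — it is imported verbatim from Morrison and Saito (Lemma~4.1 of \cite{MorrisonSaito}) — so there is no in-paper argument to compare yours against. Your overall strategy is the standard one for double covers and is, as far as I can tell, essentially the route Morrison and Saito themselves take: identify the primitive closure $\bar S$ with $\HL{X}^\tau=\Pic(X)^\tau$ via $H^2(X;\QQ)^\tau=\eta^\ast H^2(V;\QQ)=S\otimes\QQ$, exhibit the half-integral classes $[R_i]$ of the components of the ramification divisor when $C$ is reducible, and conversely realize any class of $\bar S\setminus S$ by an effective divisor and push it onto the ramification locus. Your easy direction is correct: Bezout forces a point $p\in C_1\cap C_2$, which is necessarily a node with $m_p=1$, so $[R_1]$ has a half-integral $s_p$-coefficient and $S$ is not primitive.

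The pivotal step of the converse, however, fails as written. You assert that a $\tau$-invariant effective divisor with no component on the ramification locus $R=\Fix(\tau)$ ``splits as $D'+\tau D'$ and has class $2[D']\in 2\HL{X}$.'' This is false on two counts. First, there exist irreducible $\tau$-invariant curves not contained in $R$ — most relevantly the nodal curves $\eta^{-1}(E_p)$ themselves, which map $2{:}1$ onto $E_p$ and may well occur as components of $D$ — so no such splitting exists. Second, even for a genuinely swapped pair the class is $[Z]+\tau^\ast[Z]$, which is not $2[Z]$, and $[Z]$ need not lie in $\bar S$, so ``the image of multiplication by $2$ on $\bar S/S$'' is not the right receptacle. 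The correct replacement is the transfer identity $[Z]+\tau^\ast[Z]=\eta^\ast\eta_\ast[Z]$ for swapped pairs, together with $[Z]=\eta^\ast[\eta(Z)]$ for an irreducible $\tau$-invariant $Z\not\subset R$; either way the off-$R$ part of $D$ has class in $\eta^\ast\Pic(V)=S$, which is all you need. Three smaller repairs: $D$ must be chosen as an actual fixed point of $\tau$ acting on the linear system $|v|$ (a $\tau$-invariant class does not give a $\tau$-invariant divisor for free); the representative of $v+S$ should be taken with $v\cdot h>0$ and $v^2\ge-2$ by \emph{enlarging} the coefficient of $h$, since the representative with coefficients in $\{0,1\}$ can have square $<-2$ (e.g.\ $a=0$, $|I|=8$) and $v\cdot h=0$ would not exclude $-v$ from being effective; and the congruence should read $a\equiv|I|\pmod 4$. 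With these corrections the argument goes through and also yields the quotient $\bar S/S\cong(\ZZ/2\ZZ)^{k-1}$ stated in the remark following the proposition.
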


\begin{remark}
More generally, if $\tilde S$ denotes the primitive closure of $S$ in $\HL{X}$,
then the quotient $\tilde S/S$ is isomorphic to $(\ZZ/2\ZZ)^{k-1}$
where $k$ is the number of irreducible components of $C$.
\end{remark}

The following proposition follows from A.~Mayer's results on linear systems
on K3 surfaces \cite{Mayer}, see also \cite{MorrisonSaito}*{Lemma~4.2} and
\cite{Degt}*{Proposition~3.3.1}.

\begin{prop}\label{prop:mayer}
Let $(X,h)$ be a weakly polarized K3 surface with $h^2=2$, and let $\sigma\subset\HL{X}$
be its set of nodal classes.
Then the linear system $|h|$ determines a map $X\to\CP^2$ which is
the minimal resolution of a double covering branched along a nodal sextic $C\subset\CP^2$
if and only if the following conditions hold.
\begin{enumerate}[label=\textup{(\textit{\roman*})},noitemsep]
\item\label{it1} The nodal classes are pairwise orthogonal.
\item\label{it2} If $s\in L$ is a root such that $2s\in\Sigma$, then $s\in\Sigma$.
\item\label{it3} There is no root $s\in\sigma$ such that $s+h\in 2L$.
\end{enumerate}
\end{prop}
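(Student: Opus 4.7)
The plan is to derive the proposition from A.~Mayer's theorem on linear systems on K3 surfaces, combined with the dictionary between configurations of contracted smooth rational curves and simple singularities on the image. Since $h$ is big and nef with $h^2 = 2$, Mayer's results guarantee that $|h|$ is base-point free and defines a morphism $\phi_h\colon X \to \CP^2$ of degree~$2$; an adjunction computation using $K_X = 0$ then shows that its branch curve $B \subset \CP^2$ is a sextic.

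Next, I would Stein-factor $\phi_h = g\circ f$ through a normal surface $Y$: the map $f\colon X \to Y$ is a birational morphism contracting exactly the smooth rational curves $E$ with $E\cdot h = 0$, whose classes make up $\sigma$, and $g\colon Y \to \CP^2$ is a finite double cover branched along~$B$. By Artin's contractibility criterion, each connected component of the dual graph of the exceptional divisor of $f$ is an ADE Dynkin diagram, and the corresponding singularity of $Y$ (and hence of $B$ as a plane curve) is a simple singularity of the same ADE type.

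At this point I would translate the three conditions in parallel. Condition~\ref{it1} forces each ADE component to reduce to a single vertex, ensuring that every singularity of $Y$ is an ordinary double point and correspondingly every singularity of $B$ is a node. Condition~\ref{it2} is a saturation property of the subgroup $\Sigma$ inside the root system of $L = \HL{X}$: geometrically it rules out a ``hidden'' root, typical for instance of a $D_4$-configuration where the half-sum of four orthogonal roots is again a root, so that~(i) can hold while~(ii) fails and the resolution $f$ is not in fact minimal. Condition~\ref{it3}, rewritten as the non-existence of an isotropic class $\ell = (h+s)/2$ with $\ell\cdot h = 1$, excludes an elliptic pencil $|\ell|$ whose image in $\CP^2$ is a pencil of lines; this corresponds geometrically to $B$ containing a line as a component, so that the double cover splits along part of the ramification. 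Conversely, one checks directly that if $(X, h)$ is built from a nodal sextic $C$ as in the earlier construction, all three conditions hold.

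The hardest step is condition~\ref{it3}: properties~\ref{it1} and~\ref{it2} are essentially lattice-theoretic statements about the contracted locus and its saturation, whereas the divisibility relation $s + h \in 2L$ encodes global information about the branch locus that goes beyond its local singularities. Making the correspondence precise in both directions, \iec that this divisibility forces a linear component of $B$ and that its absence suffices to preclude one, is the main technical step, and is where I would follow closely the arguments of \cite{MorrisonSaito}*{Lemma~4.2} and \cite{Degt}*{Proposition~3.3.1}.
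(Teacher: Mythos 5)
The paper itself does not prove this proposition --- it is quoted from Mayer, Morrison--Saito \cite{MorrisonSaito}*{Lemma~4.2} and Degtyarev \cite{Degt}*{Proposition~3.3.1} --- so there is no in-paper argument to match; your outline via Stein factorization, Artin contractibility and the ADE dictionary (in particular the translation of \ref{it1} into ``all singular points of $C$ are nodes'') is indeed the standard route taken in those references. However, your sketch contains a genuine error right at the step you yourself identify as the crux. It is \emph{not} true that $|h|$ is automatically base-point free for a big and nef $h$ with $h^2=2$: Mayer's theorem is a dichotomy. Either $|h|$ is base-point free and gives a degree-two morphism onto $\CP^2$, or $h=2e+\Gamma$ with $|e|$ an elliptic pencil and $\Gamma$ a $(-2)$-curve with $e\cdot\Gamma=1$, in which case $\Gamma$ is the fixed part of $|h|$ and the moving part $|2e|$ maps $X$ onto a conic, so that no double covering of the plane arises at all. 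Condition \ref{it3} is precisely the lattice-theoretic detector of this degenerate branch: if $s\in\sigma$ satisfies $s+h\in 2\KL$, then $e=(h-s)/2$ lies in $\Pic X$ with $e^2=0$, $e\cdot h=e\cdot s=1$, it is effective by Riemann--Roch since $e\cdot h>0$, and $h=2e+s$ is exactly Mayer's exceptional case; conversely, base-point freeness forces the absence of such an $s$. By asserting base-point freeness up front you have assumed away the case that \ref{it3} governs, and you are then forced to invent a different role for it.

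That substitute role is also incorrect: condition \ref{it3} does not exclude a line component of the branch sextic. The proposition concerns reduced, not necessarily irreducible, nodal sextics --- a transversal union of a line and a quintic is a legitimate nodal sextic here, and irreducibility is detected separately by the primitivity of $S$ (Proposition~\ref{prop:irred}), not by \ref{it3}. A smaller point: your reading of \ref{it2} as ensuring minimality of the resolution is closer in spirit, but note that for an honest weakly polarized K3 surface with $\sigma$ equal to the \emph{full} set of nodal classes, any root $s$ with $s\cdot h=0$ has $\pm s$ effective, hence is an integral combination of nodal classes, so \ref{it2} is essentially automatic in the geometric direction; its content lies in the converse direction, when one starts from abstract lattice data. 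To repair the proof you should restate Mayer's dichotomy correctly, prove the equivalence of its degenerate branch with the failure of \ref{it3} along the lines above, and only then run the Stein factorization argument in the base-point-free case.
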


\begin{remark}
If the sublattice $S\subset \HL{X}$ generated by $h$ and $\sigma$ is primitive,
then conditions \ref{it2} and \ref{it3} are satisfied.
\end{remark}

\subsubsection*{Periods}

Fix an integer $n\le 10$, interpreted as the number of nodes of the sextics under consideration.
We fix once and for all an even unimodular ``model lattice'' $\KL$ of signature (3,19).
Consider pairs $(h,\sigma)$, where
$h\in \KL$ is a vector of square $2$ and $\sigma\subset\KL$ is a set of $n$
pairwise orthogonal roots orthogonal to $h$,
such that the sublattice $S\subset\KL$ generated by $\sigma$ and $h$ is primitive.
We regard two pairs, say $(h,\sigma)$ and $(h',\sigma')$, as equivalent
if there is an auto-isometry of $\KL$ taking $h$ to $h'$ and $\sigma$ to $\sigma'$.

\begin{prop}[\cite{MorrisonSaito}*{Theorem~3.2}]\label{prop:uniquetriple}
All pairs $(h,\sigma)$ as above are equivalent.
\end{prop}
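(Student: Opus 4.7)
The plan is to apply Nikulin's theory of primitive embeddings of lattices into even unimodular lattices, which reduces the proposition to elementary lattice bookkeeping inside the abstract lattice $S_0 := \langle 2\rangle \oplus \langle -2\rangle^n$. First I would observe that each admissible pair $(h,\sigma)$ gives a sublattice $S := \langle h,\sigma\rangle \subset \KL$ isomorphic to $S_0$: indeed, $h$ and the $n$ elements of $\sigma$ are pairwise orthogonal with squares $2$ and $-2$, so their Gram matrix is $\mathrm{diag}(2, -2, \ldots, -2)$ of determinant $\pm 2^{n+1}$, and since $S$ is required to be primitive in $\KL$, these vectors must form a $\ZZ$-basis of $S$. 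In particular, the two admissible pairs $(h,\sigma)$ and $(h',\sigma')$ give primitive embeddings of the \emph{same} abstract lattice $S_0$ into $\KL$.

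Second, I would invoke Nikulin's uniqueness theorem for primitive embeddings of indefinite even lattices into even unimodular ones. The lattice $S_0$ has signature $(1,n)$, rank $n+1 \le 11$, and discriminant group $(\ZZ/2\ZZ)^{n+1}$; its orthogonal complement $T$ in $\KL$ has signature $(2, 19-n)$ and rank $21-n$. For $n \le 10$ the numerical hypotheses of Nikulin's criterion hold, so any two primitive embeddings of $S_0$ into $\KL$ are conjugate under the orthogonal group of $\KL$. After composing $(h',\sigma')$ with the resulting isometry of $\KL$, I may assume $\langle h,\sigma\rangle = \langle h',\sigma'\rangle =: S$ as sublattices of $\KL$.

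Third, inside $S \cong S_0$ the remaining transitivity is transparent: both pairs are $\ZZ$-bases of $S$ with the same Gram matrix, so the linear map $S \to S$ sending $h \mapsto h'$ and any chosen bijection $\sigma \to \sigma'$ is automatically an isometry of $S$. Nikulin's extension lemma then lifts this isometry to $\KL$: the restriction map from $\Aut(T)$ to the orthogonal group of the discriminant form $q_T$ is surjective in our parameter range, so any auto-isometry of $S$ extends to an auto-isometry of $\KL$ preserving $S$ and sending $(h,\sigma)$ to $(h',\sigma')$.

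The main obstacle is the verification of Nikulin's numerical hypotheses across the full range $0 \le n \le 10$, especially at the boundary $n = 10$, where $\rk T = 11$ equals the length of the discriminant form and the coarse criterion $\rk T \ge \ell(q_T) + 2$ is violated. In this edge case one must work with the explicit structure of the discriminant form of $S_0$ — a sum of $n+1$ rank-one $2$-torsion forms — to confirm both the uniqueness of the primitive embedding and the surjectivity of $\Aut(T) \to \Aut(q_T)$.
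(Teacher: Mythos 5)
The paper gives no proof of this proposition at all --- it is imported wholesale from Morrison--Saito --- so the question is whether your argument stands on its own. Your strategy (identify $S$ with the abstract lattice $\langle 2\rangle\oplus\langle-2\rangle^{n}$, invoke Nikulin's uniqueness theorem for primitive embeddings into an even unimodular lattice, then extend an isometry of $S$ to $\KL$ using surjectivity of $O(T)\to O(q_T)$) is the standard route and is correct for $n\le 9$. Two remarks on the set-up: the vectors $h,s_1,\dots,s_n$ form a $\ZZ$-basis of $S$ simply because $S$ is \emph{defined} as the sublattice they generate and they are linearly independent; primitivity of $S$ in $\KL$ is not what makes them a basis. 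Also, Nikulin's uniqueness is uniqueness of the \emph{embedding}, not merely of the image sublattice, so if you arrange the two embeddings to send the standard basis to $(h,\sigma)$ and $(h',\sigma')$ respectively, your third step is already contained in the second.

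The genuine gap is the case $n=10$, which cannot be discarded: the paper needs $n=2m$ for every $m\le 5$, and $m=5$ occurs (the rational sextics, e.g.\ the empty real scheme and $\langle 1\rangle$). Your second paragraph asserts that the numerical hypotheses hold for all $n\le 10$, but the criterion $\rk\KL-\rk S\ge\ell(q_S)+2$ reads $21-n\ge n+3$, i.e.\ $n\le 9$; at $n=10$ one has $\rk T=\ell(q_T)=11$, and neither the uniqueness of the primitive embedding nor the surjectivity of $O(T)\to O(q_T)$ follows from the coarse statement. Your last paragraph concedes this but leaves it unresolved, so as written the proof covers only $n\le 9$. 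The fix is concrete and worth carrying out: since $A_S\cong(\ZZ/2\ZZ)^{n+1}$, the complement $T$ is an even, indefinite, $2$-elementary lattice of signature $(2,19-n)$, and for such lattices Nikulin's results on $2$-elementary forms (uniqueness in the genus determined by signature, length and parity, together with surjectivity of $O(T)\to O(q_T)$) hold without any rank-versus-length hypothesis; combined with Nikulin's criterion expressing uniqueness of a primitive embedding in terms of these two properties of the complement, this closes the $n=10$ case.
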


In view of the above proposition, we fix a pair $(h,\sigma)$ where
$h\in\KL$ is a vector of square 2 and $\sigma\subset\KL$ is a set of
$n$ pairwise orthogonal roots, orthogonal to $h$,
such that the sublattice $S\subset\KL$ generated by $\sigma$ and $h$ is primitive.
Let $\Sigma\subset\KL$ denote the sublattice generated by $\sigma$.

\begin{definition}
An \emph{$(h,\sigma)$-marking} of a K3 surface $X$ is an isometry
$\mu:\KL\to\HL{X}$ such that $\mu(h)$ is a weak polarization of $X$
and the set of nodal classes of $(X,\mu(h))$ is $\mu(\sigma)$.
An \emph{$(h,\sigma)$-marked K3 surface} is a pair $(X,\mu)$ formed by
a K3 surface $X$ and an $(h,\sigma)$-marking $\mu$ of $X$.
\end{definition}

Note that a weakly polarized K3 surface admits an $(h,\sigma)$-marking
if and only if it can be obtained from an irreducible sextic $C\subset\CP^2$ with $n$ nodes.

The Hodge structure of a K3 surface $X$ is determined by the position of
the complex line $H^{2,0}(X)\subset H^2(X;\CC)$.
Every non-zero class $\omega_X\in H^{2,0}(X)$
is represented by a non-vanishing holomorphic 2-from on $X$.
It satisfies $\omega_X^2 = 0$ and $\omega_X\cdot\bar\omega_X > 0$.
Let us denote by $\KL_\CC$ the complex vector space $\KL\otimes_\ZZ\CC$.
The integral bilinear form defined on $\KL$ naturally extends to a
complex-valued bilinear form on $\KL_\CC$.

\begin{definition}
The \emph{period space $\Omega_{h,\sigma}^0$ for $(h,\sigma)$-marked K3 surfaces}
is defined as follows:
\begin{align*}
\Omega_{h,\sigma} & =\mathrlap{\left.\{\omega \in \KL_\CC \mid \omega^2 = 0,\;\omega \cdot \bar\omega > 0,\;
\omega\cdot h=0,\;\omega\orth\sigma \} \right/ \CC^\ast \subset \PP(\KL_\CC)} && \\
\Omega_{h,\sigma}^0 & = \,\OH \setminus \textstyle\bigcup_{\delta\in\Delta} H_\delta, \quad\text{ where } &
H_\delta &= \{[\omega]\in\OH \mid \omega\cdot \delta = 0\},\\
&& \Delta &= \{\delta\in\KL\mid \delta^2=-2,\delta\cdot h=0, \delta\not\in\Sigma\}.
\end{align*}
The \emph{period} of an $(h,\sigma)$-marked K3 surface $(X,\mu)$
is defined as $[\mu^{-1}_\ast(\omega_X)] \in \PP(\KL_\CC)$ where
$\omega_X$ is a non-zero class in $H^{2,0}(X)$ and $\mu_\ast$
denotes the extension of $\mu$ to a map $\KL_\CC\to H^2(X,\CC)$.
\end{definition}

\begin{remark}\ 
\begin{enumerate}
\item The period space $\OH^0$ is an open set (in the classical topology)
in a quadric of dimension $19-n$.
In particular, it is a non-compact complex manifold.
\item The periods of $(h,\sigma)$-marked K3 surfaces lie in $\OH^0\subset\PP(\KL_\CC)$.
\end{enumerate}
\end{remark}

By a \emph{family} of complex manifolds we mean a proper surjective submersion
$f:\mathcal{X}\to B$ with connected fibres,
where $\mathcal{X}$ and $B$ are connected smooth manifolds,
together with a complex structure defined on each fibre of $f$, varying smoothly along the base.
Given some type of complex manifolds (such as curves, K3 surfaces, etc.),
by a family of this type we mean a family whose fibres are of the said type.
 
\begin{definition}
A \emph{family of $(h,\sigma)$-marked K3 surfaces} is a pair $(\mathcal{X},\mu)$,
where $f:\mathcal{X}\to B$ is a family of K3 surfaces
and $\mu: \underline\KL_B\to R^2f_\ast(\ZZ)$ is a sheaf isomorphism such that
$\mu_b : \KL\to\HL{X_b}$ is an $(h,\sigma)$-marking of $X_b$ for each $b\in B$.
\end{definition}

The next theorem is a special case of an analogous statement
for weakly polarized marked K3 surfaces by Morrison~\cite{MorrisonRemarks}.
It is based on the Global Torelli Theorem for K3 surfaces by I.~Shafarevich and I.~Piatetski-Shapiro~\cite{Shafarevich}
and on the surjectivity of the period map by V.~Kulikov~\cite{Kulikov}.
See also \cite{Degt}*{Theorem~3.4.1} and \cite{MorrisonSaito}*{Theorem~4.3}.

\begin{theorem}[Morrison~\cite{MorrisonRemarks}]\label{thm:moduli}
The space $\OH^0$ is a fine moduli space for $(h,\sigma)$-marked K3 surfaces.
\end{theorem}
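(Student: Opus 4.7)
The plan is to deduce this theorem from Morrison's fine moduli space theorem for weakly polarized marked K3 surfaces by identifying $\OH^0$ with the precise subset of the period domain cutting out the $(h,\sigma)$-marking condition. Write $\OH$ as the period space for $h$-marked K3 surfaces (i.e.\ K3 surfaces $X$ together with an isometry $\mu\colon\KL\to\HL{X}$ for which $\mu(h)$ is a weak polarization, with no requirement on the nodal classes). Morrison's general result, combined with the Global Torelli theorem of Shafarevich--Piatetski-Shapiro and the surjectivity of the period map due to Kulikov, produces a fine moduli space which is an open subset of $\OH$. The task is then to pick out those periods for which the full set of nodal classes is exactly $\mu(\sigma)$.

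The key arithmetic observation is the standard fact that a class $\delta\in\HL{X}$ of square $-2$ with $\delta\cdot\mu(h)=0$ is a nodal class of $(X,\mu(h))$ if and only if $\delta\in H^{1,1}(X)$, equivalently $\delta\cdot\omega_X=0$. Applied to an $h$-marked K3 surface with period $[\omega]$, this means the set of nodal classes of $(X,\mu(h))$ equals
\[
\mu\bigl(\{\delta\in\KL\mid \delta^2=-2,\ \delta\cdot h=0,\ \delta\cdot\omega=0\}\bigr).
\]
Since $\omega\orth\sigma$ is forced by the definition of $\OH$, the nodal classes always contain $\mu(\sigma)$; the requirement that there be no extra nodal classes is precisely $[\omega]\notin\bigcup_{\delta\in\Delta}H_\delta$, which is the definition of $\OH^0$. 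Thus the restriction of the moduli functor to $(h,\sigma)$-marked surfaces is represented by $\OH^0$ as a topological subspace.

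To finish, I would check that the universal family $\mathcal{X}\to\OH$ coming from Morrison's theorem restricts to a family over $\OH^0$ which is universal in the $(h,\sigma)$-marked sense: given any family $(\mathcal{Y}\to B,\nu)$ of $(h,\sigma)$-marked K3 surfaces, the classifying map $B\to\OH$ attached to its underlying $h$-marking lands in $\OH^0$ by the computation above, and is unique by the Torelli-based universality of Morrison's family. Here one also uses that weak polarization, being an open and closed condition on nef classes in families, is preserved, and that the nodal-class locus varies as expected under smooth deformation, so the map $B\to\OH$ is well-defined on all of $B$.

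The main obstacle is purely bookkeeping: translating Morrison's statement, which is phrased for weak polarizations without reference to a distinguished system of nodal classes, into the current setting where $\sigma$ is fixed as part of the data. The only nontrivial content beyond citation is the two-sided control of the nodal classes given above, showing that removing the hyperplane arrangement $\bigcup_{\delta\in\Delta}H_\delta$ from $\OH$ corresponds exactly to forcing the nodal classes to coincide with $\mu(\sigma)$; everything else---representability, construction of the universal family, and uniqueness of classifying maps---is inherited from Morrison's theorem together with the Torelli and surjectivity theorems cited before the statement.
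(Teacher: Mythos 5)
The paper offers no proof of this theorem: it is quoted as a special case of Morrison's fine moduli theorem for weakly polarized marked K3 surfaces, resting on the Global Torelli theorem of Piatetski-Shapiro--Shafarevich and Kulikov's surjectivity of the period map. Your proposal reconstructs precisely that deduction, so in substance you are following the route the paper intends, and the overall bookkeeping (restricting Morrison's universal family to the complement of the hyperplane arrangement, and checking that classifying maps of $(h,\sigma)$-marked families land there) is sound. One imprecision worth fixing: a root $\delta$ with $\delta\cdot\mu(h)=0$ and $\delta\cdot\omega_X=0$ is not itself necessarily a nodal class --- Riemann--Roch only makes $\pm\delta$ effective, and the effective representative may be a sum of several curves. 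The correct two-sided statement is that a $\delta\in\Delta$ orthogonal to $\omega$ exists if and only if the set of nodal classes differs from $\mu(\sigma)$; the forward direction uses that every effective class orthogonal to the big and nef class $\mu(h)$ is supported on the exceptional locus of $|h|$ and is therefore a nonnegative integral combination of nodal classes, so if all nodal classes lay in $\mu(\sigma)$ then every such $(1,1)$-root would lie in $\mu(\Sigma)$. One should also invoke the conditions of Proposition~\ref{prop:mayer} (guaranteed here by primitivity of $S$) to ensure that the surfaces parameterized by $\OH^0$ really do arise from nodal sextics; with those two adjustments your argument is complete.
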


This means that isomorphism classes of families of $(h,\sigma)$-marked K3 surfaces over $B$
are naturally in bijection with smooth maps from $B$ to $\OH^0$.
In particular, there exists a universal family of $(h,\sigma)$-marked K3 surfaces over $\OH^0$
from which every other family is obtained as a pull-back.

\begin{remark}\label{rk:mayer2}
The equivalence between $(h,\sigma)$-marked K3 surfaces and nodal sextics
extends to families of $(h,\sigma)$-marked K3 surfaces.
More precisely, a family of $(h,\sigma)$-marked K3 surfaces  $(\mathcal{X},\mu)$
determines a bundle of projective planes $\PP(E)\to B$ and an equisingular
family of nodal sextics $\mathcal{C}\subset \PP(E)$ over $B$.
Indeed, if $(\mathcal{X},\mu)$ is a family of $(h,\sigma)$-marked K3 surfaces,
then $\mu_b^{-1}(h)$ is a class of Hodge type $(1,1)$ on each fibre $X_b$.
By the variational Lefschetz theorem on $(1,1)$-classes, there is a line bundle
$\LL$ on $\mathcal{X}$ such that $\mu_b^{-1}(h)$ is the class of the
restriction $\LL|_b$ for each $b\in B$.
Since $\dim\Gamma(X_b,\LL|_b)=3$ independently of $b\in B$,
the direct image sheaf $\pi_\ast\LL$ is locally free and defines
a rank 3 vector bundle $E\to B$.
The line bundle $\LL$ defines a morphism $\mathcal{X} \to \PP(E)$ which is
the minimal resolution of a double covering branched along an equisingular
family of nodal sextics $\mathcal{C}\subset\PP(E)$ over $B$.
\end{remark}

\subsection{Real nodal sextics and their periods}\label{sec:periods.real}

\subsubsection*{Real structures}
A \emph{real structure} on a complex (algebraic or analytic) variety $X$
is an anti-holomorphic involution $c:X\to X$.
Equivalently, a real structure can be viewed as an isomorphism between
$X$ and $\bar{X}$, where $\bar{X}$ is the complex variety with the same underlying
space as $X$, but opposite complex structure, \iec
$\mathcal{O}_{\bar{X}}(U) := \{ \bar{f} \mid f\in\mathcal{O}_X(U)\}$.
A \emph{real variety} is a pair $(X,c)$ formed by a complex variety $X$ and
a real structure $c$ defined on it.
The \emph{real points} of a real variety are those points which are fixed by
the real structure.
By a real structure on a family of complex manifolds, say $f:\mathcal{X}\to B$,
we mean a smooth involution $c$ on $\mathcal{X}$ such that $f\circ c = f$
and the restriction of $c$ to each fibre is anti-holomorphic.

\subsubsection*{Real sextics and real K3 surfaces}
Suppose that $C\subset\CP^2$ is a real sextic, \ie it is invariant under complex conjugation,
or equivalently, defined by a polynomial with real coefficients.
Let $(X,h)$ be the weakly polarized K3 surface obtained from $C$.
As in \S~\ref{sec:periods.complex}, let $\pi:X\to\CP^2$ be the composition
$\bl\circ\,\eta$, where $\bl:V\to \CP^2$ denotes the blow-up of $\CP^2$ in the nodes of $C$ and
$\eta:X\to V$ is the double covering of $V$ branched along the strict transform of $C$.
There are two real structures $c_1, c_2$ on $X$
such that  $\pi\circ c_i=\conj\,\circ\,\pi$,
where $\conj:\CP^2\to\CP^2$ denotes the complex conjugation.
We have $c_2 = \tau\circ c_1 = c_1\circ\tau$,
where $\tau:X\to X$ is the deck transformation of $\eta$.

Let $c$ be one of the two real structures.
It induces an involutive isometry $c^\ast$ on the lattice $\HL{X}$.
Note that $c$ maps algebraic curves on $X$ to algebraic curves, but
it inverts the orientation.
We have $c^\ast(h) = -h$ and $c^\ast(\sigma) = -\sigma$,
where $\sigma$ is the set of nodal classes of $(X,h)$.
If $s\in\sigma$ is the class of an exceptional curve over a real node,
then we have $c^\ast(s) = -s$.
If $s'$ and $s''$ are the classes of the exceptional curves over a pair of non-real nodes,
then we have $c^\ast(s')=-s''$ and $c^\ast(s'') = -s'$.
Moreover, $c^\ast$ is an anti-Hodge isometry on $H^2(X;\CC)$,
\ie it maps forms of type $(p,q)$ to forms of type $(q,p)$.
Therefore, the map $x\mapsto c^\ast(\bar x)$ defines a real structure on the complex line $H^{2,0}(X)$.
Let $\omega_X\in H^{2,0}$ be a nonzero class such that $c^\ast(\bar\omega_X)=\omega_X$.
If we write $\omega_X=\omega_+ + \I\,\omega_-$ with $\omega_\pm\in H^2(X,\RR)$,
then the conditions $\omega_X^2=0$ and $\omega_X\cdot\bar\omega_X>0$
imply that $\omega_+^2 = \omega_-^2 > 0$ and $c^\ast(\omega_\pm)=\pm\,\omega_\pm$.
Therefore the three vectors $h,\omega_+$ and $\omega_-$ are pairwise
orthogonal and span a three-dimensional positive definite subspace of $H^2(X,\RR)$.
In particular, the $c^\ast$-invariant sublattice of $\HL{X}$
has one positive square.

\subsubsection*{Choice of the real structure}
It is not possible to coherently pick one of the two real structures
for all real nodal sextics.
This is possible however if we consider only real nodal sextics without
real nodes. Let $c_1$ and $c_2$ be the two real structures, and
let $X^{c_1}, X^{c_2}\subset X$ be their respective sets of fixed points.
We have $X^{c_i} = \pi^{-1}(B_i)$, where $B_1$ and $B_2$ are the two
(not necessarily connected) regions delimited by $\RR C$ inside $\RP^2$;
more precisely, if $f\in\RR[x_0,x_1,x_2]$ is a polynomial defining $C$,
then we have $B_1 = \{x\in\RP^2 \mid f(x)\ge 0\}$ and
$B_2 = \{x\in\RP^2 \mid f(x)\le 0\}$ or vice versa.
If $C$ is a sextic with only non-real nodes, then,
like in the case of non-singular sextics,
exactly one of the two regions $B_1$ and $B_2$ is non-orientable.
The real structure $c$ whose fixed point set covers the non-orientable
region of $\RP^2\setminus\RR C$
is distinguished by the following homological property
(see~\cite{Nikulin79}*{p.~163}):
\begin{equation}\tag{$\ast$}\label{stern}
\text{There is an element } x\in \HL{X} \text{ with }
c^\ast(x)=-x \text{ and } x\cdot h\equiv 1\pmod{2}.
\end{equation}
In the special case where the real part of the curve $C$ is empty,
one real structure has no fixed points, and the fixed point set of the other
real structure is a non-trivial unramified covering of $\RP^2$.
In this case the real structure with property~(\ref{stern}) is the one
without fixed points.
For the classification, it is convenient to consistently consider one
of the two real structures.
In the following, we always suppose that the real structure $c$ with
property~(\ref{stern}) is chosen.
We call the triple $(X,h,c)$ the \emph{weakly polarized real K3 surface
obtained from $C$}.

\subsubsection*{Real periods}

As in \S~\ref{sec:periods.complex}, we fix a vector $h\in\KL$ and a set of nodal classes $\sigma\subset\KL$.

\begin{definition}
An involutive isometry $\phi$ on $\KL$ is called \emph{geometric}
if $\phi(h) = -h$, $\phi(\sigma)=-\sigma$ and
the $\phi$-invariant sublattice $\KL_+ = \{x\in\KL\mid \phi(x)=x\}$
has one positive square.
\end{definition}
\begin{definition}
Fix a geometric involution $\phi$ on $\KL$.
An \emph{$(h,\sigma,\phi)$-marking} of a real K3 surface $(X, c)$ is an
$(h,\sigma)$-marking of $X$ such that $\mu\circ\phi = c^\ast\circ\mu$.
An \emph{$(h,\sigma,\phi)$-marked K3 surface} is a triple $(X,c,\mu)$
where $(X,c)$ is a real K3 surface and $\mu$ is an $(h,\sigma,\phi)$-marking of $(X,c)$.
A real sextic $C$ is \emph{of homological type $(\KL,h,\sigma,\phi)$} if
the real weakly polarized K3 surface obtained from $C$ admits a $(h,\sigma,\phi)$-marking.
\end{definition}

\begin{definition}
The \emph{period space $\OHp^0$ for $(h,\sigma,\phi)$-marked real K3 surfaces}
is defined as follows:
\begin{align*}
\OHp & =\mathrlap{\left.\{\omega \in \KL_\CC \mid \omega^2 = 0,\;\omega \cdot \bar\omega > 0,\;
\omega\cdot h=0,\;\omega\orth\sigma, \phi_\ast(\omega)=\bar\omega \} \right/ \RR^\ast} && \\
\OHp^0 & = \,\OHp \setminus \textstyle\bigcup_{\delta\in\Delta} H_\delta, \quad\text{ where } &
H_\delta &= \{[\omega]\in\OHp \mid \omega\cdot \delta = 0\},\\
&& \Delta &= \{\delta\in\KL\mid \delta^2=-2,\delta\cdot h=0, \delta\not\in\Sigma\}.
\end{align*}
The \emph{period} of an $(h,\sigma,\phi)$-marked real K3 surface $(X,c,\mu)$
is defined as $[\mu^{-1}_\ast(\omega_X)] \in \OHp^0$, where
$\omega_X$ is a non-zero class in $H^{2,0}(X)$ such that $c^\ast\omega_X=\bar\omega_X$,
and $\mu_\ast$ denotes the extension of $\mu$ to a map $\KL_\CC\to H^2(X,\CC)$.
\end{definition}

The fact that $\phi$ is geometric implies that $\OHp$ is non-empty.
The subspaces $H_\delta\subset\OHp$ can be of real codimension one or two
depending on $\delta$. Therefore the real period space $\OHp^0$ is non-empty,
but in general not connected.

\begin{definition}
A \emph{family of $(h,\sigma,\phi)$-marked real K3 surfaces} is a triple $(\mathcal{X},\mu,c)$,
where $(\mathcal{X},\mu)$ is a family of $(h,\sigma)$-marked K3 surfaces
$c$ is and a real structure on $\mathcal{X}$, such that
$\mu_b : \KL\to\HL{X_b}$ is an $(h,\sigma,\phi)$-marking of $(X_b,c_b)$ for each $b\in B$,
where $c_b$ denotes the restriction of $c$ to $X_b$.
\end{definition}

If $\phi$ is a geometric involution, then $x\mapsto\phi(\bar x)$
defines a real structure on the complex period space $\OH^0$, and
the real period space $\OHp^0\subset\OH^0$ is the set of real points
for this real structure.

\begin{lemma}
Let $(\mathcal{X},\mu)$ be a family of $(h,\sigma)$-marked sextics
whose periods are contained in $\OHp^0\subset\OH^0$.
Then there exists a unique real structure $c$ on $\mathcal{X}$
such that $(\mathcal{X},\mu,c)$ is a family of $(h,\sigma,\phi)$-marked real K3 surfaces.
\end{lemma}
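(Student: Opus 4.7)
The plan is to apply the fine-moduli property of $\OH^0$ (Theorem~\ref{thm:moduli}) after constructing an auxiliary $(h,\sigma)$-marked family with the same period map. Let $\mathcal{X}'$ denote $\mathcal{X}$ equipped with the conjugate complex structure on each fibre, and set $\mu'_b := \mu_b \circ \phi$. The first step is to verify that $(\mathcal{X}', \mu')$ is itself a family of $(h,\sigma)$-marked K3 surfaces. Conjugating the complex structure preserves the orientation of each fibre (two complex dimensions), but reverses the complex orientation of every complex curve; thus, under the natural identification $H^2(\bar{X}_b,\ZZ) = H^2(X_b,\ZZ)$, the fundamental class in $\bar X_b$ of each irreducible curve is the negative of its class in $X_b$. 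Consequently $-\mu_b(h)$ is the weak polarization of $\bar X_b$ and $-\mu_b(\sigma)$ is its set of nodal classes, which---using $\phi(h) = -h$ and $\phi(\sigma) = -\sigma$---are precisely $\mu'_b(h)$ and $\mu'_b(\sigma)$.

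Second, I compare periods. The holomorphic $2$-form of $\bar X_b$ is $\overline{\omega_{X_b}}$, so writing $p_b := (\mu_b)^{-1}_*(\omega_{X_b})$, the period of $(\mathcal{X}', \mu')$ at $b$ equals $[\phi(\bar p_b)] \in \OH^0$. The hypothesis that the periods of $(\mathcal{X}, \mu)$ lie in $\OHp^0$ is precisely the statement $[\phi(\bar p_b)] = [p_b]$ for every $b \in B$, so the two families share their period map. Theorem~\ref{thm:moduli} then produces a unique isomorphism of $(h,\sigma)$-marked families $c\colon (\mathcal{X}, \mu) \to (\mathcal{X}', \mu')$. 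Because $\mathcal{X}$ and $\mathcal{X}'$ share their underlying smooth structure, $c$ is naturally a smooth self-map of $\mathcal{X}$ over $B$ whose restriction to each fibre is anti-holomorphic, and the identity $c^* \circ \mu = \mu \circ \phi$ holds by construction.

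It remains to check that $c$ is an involution and is unique. The composition $c^2$ is a fibrewise biholomorphism acting trivially on $H^2$ (via $\phi^2 = \id$), so the Global Torelli Theorem forces $c^2 = \id$ on each fibre. Similarly, if $\tilde c$ is any other real structure making $(\mathcal{X}, \mu, \tilde c)$ an $(h,\sigma,\phi)$-marked family, then $\tilde c \circ c$ is a fibrewise biholomorphism acting trivially on cohomology, hence equal to the identity, yielding $\tilde c = c$.

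The main obstacle is the first step: recognising that the correct marking of the conjugate family is $\mu \circ \phi$ rather than $\mu$ itself. This rests on the sign flip of curve classes under conjugation of the complex structure---the same subtlety underlying the convention $c^*(h) = -h$ in the definition of a geometric involution. Once this is correctly set up, the remainder is a direct application of the fine-moduli theorem and Torelli.
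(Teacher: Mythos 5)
Your proposal is correct and follows essentially the same route as the paper: form the conjugate family, mark it by $\mu\circ\phi$, observe the periods agree because they lie in $\OHp^0$, and invoke the fine-moduli property of $\OH^0$ to obtain the unique isomorphism $c$. The extra steps you supply (the sign flip of curve classes justifying that $\mu\circ\phi$ marks the conjugate family, and the Torelli argument that $c$ is an involution) are details the paper leaves implicit, not a different method.
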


\begin{proof}
Consider the family $\overline{\mathcal{X}}$ obtained from $\mathcal{X}$
by inverting the complex structure on each fibre.
Then $\mu\circ\phi$ defines an $(h,\sigma)$-marking of $\overline{\mathcal{X}}$,
and the periods of the marked families $(\mathcal{X},\mu)$ and
$(\overline{\mathcal{X}},\mu\circ\phi)$ coincide.
Since $\OH^0$ is a fine moduli space for $(h,\sigma)$-marked sextics,
this implies that there is a unique isomorphism $c:\mathcal{X}\to\overline{\mathcal{X}}$
such that $\mu\circ\phi = c^\ast\circ\mu$.
In other words, $c$ defines a real structure on $\mathcal{X}$ such that
$(\mathcal{X},\mu,c)$ is a family of $(h,\sigma,\phi)$-marked real K3 surfaces.
\end{proof}

\begin{corollary}
The space $\OHp^0$ is a fine moduli space for $(h,\sigma,\phi)$-marked real K3 surfaces.
\end{corollary}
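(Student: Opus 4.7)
The plan is to deduce the corollary from Theorem~\ref{thm:moduli} combined with the preceding lemma. The universal family of $(h,\sigma,\phi)$-marked real K3 surfaces will simply be the restriction to $\OHp^0$ of the universal $(h,\sigma)$-marked family $(\mathcal{X}^u,\mu^u)$ over $\OH^0$ given by Theorem~\ref{thm:moduli}, endowed with the canonical real structure furnished by the lemma. More precisely, the restriction of $(\mathcal{X}^u,\mu^u)$ to $\OHp^0 \subset \OH^0$ is a family whose periods, by construction, lie in $\OHp^0$, hence by the lemma it carries a unique real structure $c^u$ turning the triple $(\mathcal{X}^u|_{\OHp^0}, \mu^u|_{\OHp^0}, c^u)$ into a family of $(h,\sigma,\phi)$-marked real K3 surfaces. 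This is the candidate universal family.

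To verify its universal property, I would take an arbitrary family $(\mathcal{X}, \mu, c)$ of $(h,\sigma,\phi)$-marked real K3 surfaces over a base $B$. Forgetting $c$ yields a family of $(h,\sigma)$-marked K3 surfaces, so Theorem~\ref{thm:moduli} produces a unique smooth classifying map $f\colon B \to \OH^0$ such that $(\mathcal{X},\mu)$ is the pull-back of $(\mathcal{X}^u,\mu^u)$ along $f$. The key step is to check that $f$ actually factors through $\OHp^0$: for every $b\in B$, choose a nonzero class $\omega_{X_b}\in H^{2,0}(X_b)$ with $c_b^\ast\bar\omega_{X_b}=\omega_{X_b}$; then the intertwining relation $\mu_b\circ\phi = c_b^\ast\circ\mu_b$ translates directly into $\phi_\ast(\mu^{-1}_\ast\omega_{X_b}) = \overline{\mu^{-1}_\ast\omega_{X_b}}$, which is exactly the defining reality condition for $\OHp$. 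Since $f(b)$ already lies in $\OH^0$, it lies in $\OHp^0$.

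It then remains to check that the pulled-back real structure on $f^\ast(\mathcal{X}^u|_{\OHp^0}, \mu^u, c^u)$ coincides with the given $c$. Both are real structures on the same complex family of $(h,\sigma)$-marked K3 surfaces, both satisfy the marking-compatibility through $\phi$, and the preceding lemma asserts that such a real structure is unique; hence they agree. The uniqueness of $f$ itself is inherited from Theorem~\ref{thm:moduli}, so this produces the required bijection between isomorphism classes of families of $(h,\sigma,\phi)$-marked real K3 surfaces over $B$ and smooth maps $B\to\OHp^0$. The only subtlety is translating the reality condition on the family into the reality condition on periods defining $\OHp$; once this is dispatched, everything else is a formal consequence of Theorem~\ref{thm:moduli} and the lemma.
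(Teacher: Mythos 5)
Your argument is correct and is exactly the deduction the paper intends: the corollary is stated without proof precisely because it follows from restricting the universal $(h,\sigma)$-marked family of Theorem~\ref{thm:moduli} to $\OHp^0$ and invoking the existence and uniqueness of the compatible real structure from the preceding lemma, just as you do. Your verification that the classifying map of a real family factors through $\OHp^0$ (via the choice of $\omega_{X_b}$ with $c_b^\ast\bar\omega_{X_b}=\omega_{X_b}$) is the only point requiring any computation, and it is handled correctly.
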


\noindent The \emph{orthogonal group} of a homological type $(\KL,\HTp)$ is
\[
\GHp = \{ \psi:\KL\to\KL \text { isometry } \mid \psi(h)=h, \psi(\sigma)=\sigma
\text{ and } \psi\circ\phi=\phi\circ\psi \}.
\]

\begin{prop}\label{prop:period_ri}
The period map induces a one-to-one correspondence between
rigid isotopy classes of real irreducible nodal sextics of
homological type $(\KL,h,\sigma,\phi)$ and
connected components of $\OHp^0$ modulo the action of $\GHp$.
\end{prop}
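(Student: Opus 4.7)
The plan is to establish the stated bijection in three steps: well-definedness (together with $\GHp$-invariance) of the period map on rigid isotopy classes, surjectivity, and injectivity.

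For well-definedness, any real irreducible nodal sextic $C$ of homological type $(\KL,\HTp)$ yields, by the construction of §\ref{sec:periods.real}, a weakly polarized real K3 surface $(X,h_X,c)$ admitting an $(h,\sigma,\phi)$-marking $\mu$; since any two such markings differ by an element of $\GHp$ acting on the source, the $\GHp$-orbit of the period $[\mu^{-1}_\ast\omega_X]\in\OHp^0$ is a well-defined invariant of $C$. To check that this orbit depends only on the rigid isotopy class, consider a rigid isotopy realised by a family of real nodal sextics over a connected base $B$. The associated family of weakly polarized real K3 surfaces admits, locally on $B$, a continuous $(h,\sigma)$-marking; by the lemma preceding the corollary such a marking is automatically promoted to an $(h,\sigma,\phi)$-marking, and the resulting local period maps $B\to\OHp^0$ are continuous, so any two sextics related by a rigid isotopy have periods lying in the same connected component of $\OHp^0$.

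Surjectivity follows from the fine moduli property of $\OHp^0$ combined with Proposition~\ref{prop:mayer}: every $[\omega]\in\OHp^0$ is realised as the period of some $(h,\sigma,\phi)$-marked real K3 surface $(X,c,\mu)$, and the removal of the hyperplanes $H_\delta$, together with the primitivity of the sublattice $S$ and the pairwise orthogonality of $\sigma$, ensures that conditions (i)--(iii) of Proposition~\ref{prop:mayer} hold. Hence $(X,\mu(h))$ is the minimal resolution of a double cover of $\CP^2$ branched along an irreducible real nodal sextic of the prescribed homological type. For injectivity, suppose $C_0,C_1$ give periods in the same $\GHp$-orbit of a connected component of $\OHp^0$. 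After modifying the marking of $C_0$ by a suitable element of $\GHp$, both periods lie in a single connected component $\Omega$; pick a continuous path $\gamma\colon[0,1]\to\Omega$ joining them. The fine moduli property yields a family of $(h,\sigma,\phi)$-marked real K3 surfaces over $[0,1]$, which by the real counterpart of Remark~\ref{rk:mayer2} corresponds to an equisingular family of real nodal sextics inside a real projective plane bundle $\PP(E)\to[0,1]$. Since $[0,1]$ is contractible the bundle is trivialisable, producing a rigid isotopy from $C_0$ to $C_1$.

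The main obstacle is establishing the real counterpart of Remark~\ref{rk:mayer2}: one must show that the holomorphic line bundle $\LL$, its direct image $E$, and the induced morphism $\mathcal{X}\to\PP(E)$ inherit real structures compatible with $c$, despite the fact that $\phi(h)=-h$ means that $\LL$ is not preserved by $c$ in the naive sense but rather exchanged with an appropriate dual twist. This is a descent argument that should follow from uniqueness in the variational Lefschetz theorem applied to the $c$-invariant family, together with the identity $\mu\circ\phi=c^\ast\circ\mu$; once it is in place, every other step above is a routine consequence of the fine moduli corollary.
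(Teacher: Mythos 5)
Your proposal is correct and follows essentially the same route as the paper: your three steps are packaged there into the double fibration $W \leftarrow U \to \OHp^0$, where the forgetful map $U\to W$ is a covering with deck group $\GHp$ and the period map $U\to\OHp^0$ is a $\PGL_3(\RR)$-principal bundle --- which is exactly your surjectivity-plus-connected-fibres (path-lifting) argument, since $\PGL_3(\RR)$ is connected. The ``main obstacle'' you flag is resolved in the paper precisely as you anticipate: the real structure on the universal family induces one on $E=\pi_\ast\LL$, and $U$ is identified with the projective frame bundle of the real part $\RR E$.
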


\begin{proof}
Consider the space $\RP H^0(\CP^2,\mathcal{O}_{\CP^2}(6))^\ast \cong\RP^{27}$
parameterizing real plane sextics, and let
$W\subset\RP^{27}$ be the subset parameterizing
real irreducible nodal sextics $C\subset\CP^2$
of homological type $(\KL,h,\sigma,\phi)$.
Rigid isotopy classes of such sextics are connected components of $W$.
Let $U$ be the set of pairs $(C,\mu)$ where $C\in W$ and $\mu$ is an
$(h,\sigma,\phi)$-marking of the weakly polarized real K3 surface obtained from $C$.
Let $p_1:U\to W$ be the map forgetting the marking.

Consider the universal curve $\mathcal{C}_W = \{(C,x)\in W\times\CP^2 \mid x\in C\}$ over $W$, 
and let $\mathcal{X}_W$ be the family of K3 surfaces obtained from $\mathcal{C}_W\subset W\times\CP^2$.
The projection $\mathcal{X}_W\to W$ is topologically a locally trivial fibration.
Therefore, once we fix an open contractible subset $N \subset W$,
we have natural isometries between $\HL{X_C}$ and $\HL{X_{C'}}$ for $C,C'\in N$,
\ie  markings can be extended to nearby sextics.
This defines a topology on $U$ such that the projection $p_1: U\to W$
is a regular, unramified covering with deck transformation group $\GHp$.
Therefore, $p_1$ induces a bijection between $\pi_0(U)/\GHp$ and
$\pi_0(W)$.

Let $p_2:U\to\OHp^0$ be the period map.
The group $\PGL_3(\RR)$ naturally acts on $U$, and the period map
is invariant with respect to this action.
We claim that $p_2:U\to\OHp^0$ is a $\PGL_3(\RR)$-principal bundle.
Assuming this claim, $p_2$ is a locally trivial fibration with connected fibres,
so it induces a bijection between $\pi_0(U)$ and $\pi_0(\OHp^0)$.
Moreover, since $p_2$ is equivariant with respect to the action of
$\GHp$,
it gives rise to a bijection between the quotients
$\pi_0(U)/\GHp$
and $\pi_0(\OHp^0)/\GHp$,
which proves the proposition.

To prove the claim, consider the universal
family $(\mathcal{X},\mu,c)$ of real $(\HTp)$-marked K3 surfaces over
the real period space $\OHp^0$.
Let $\LL\to\mathcal{X}$ be the line bundle defined by the polarization
and let $E$ be the rank 3 vector bundle over $\OHp^0$ defined
by $\pi_\ast\LL$ (cf. Remark~\ref{rk:mayer2}).
The real structure $c$ induces a real structure on $E$;
let $\RR E$ denote the real part of $E$, which is a real vector bundle.
The elements $(C,\mu)\in p_2^{-1}([\omega])$ naturally correspond to 
isomorphisms between $\PP(\RR E_{[\omega]})$ and a fixed real projective plane.
Therefore we can identify $U$ with the projective frame
bundle of $\RR E$, which clearly is a $\PGL_3(\RR)$-principal bundle.
\end{proof}

\newpage
\section{Homological quasi-simplicity}\label{sec:quasisimple}

Our aim in this section is to prove the following proposition.

\begin{prop}\label{prop:reflections}
For real irreducible nodal sextics without real nodes,
the rigid isotopy class is determined by the homological type.
\end{prop}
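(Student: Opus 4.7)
The plan is to use Proposition~\ref{prop:period_ri} to reduce the statement to showing that $\GHp$ acts transitively on the set of connected components of the real period space $\OHp^0$. I would then construct, for each codimension-one wall of $\OHp^0 \subset \OHp$, an explicit involution in $\GHp$ realizing the reflection across that wall, and conclude by the standard transitivity of wall-reflections on the chambers of a hyperplane arrangement.

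First I would pin down the codimension-one walls. Decomposing $\KL\otimes\RR = \KL_+^\RR \oplus \KL_-^\RR$ into the $\pm 1$ eigenspaces of $\phi$, which are orthogonal since $\phi$ is an isometry, a period $[\omega]\in\OHp$ writes uniquely as $\omega = \omega_+ + \I\,\omega_-$ with $\omega_\pm \in \KL_\pm^\RR$, satisfying $\omega_+^2 = \omega_-^2 > 0$ together with orthogonality to $h$ and $\sigma$. For $\delta\in\Delta$, splitting $\delta$ into its $\KL_\pm$-components shows that the equation $\omega\cdot\delta = 0$ amounts to one real equation when $\phi(\delta) = \pm\delta$ and to two independent real equations otherwise. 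Hence $H_\delta$ is a real hyperplane in $\OHp$ precisely when $\delta\in\KL_+$ or $\delta\in\KL_-$, and the components of $\OHp^0$ are the open chambers of this real hyperplane arrangement.

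Second, for every such $\delta$ I would produce an involution $\psi_\delta\in\GHp$ acting on $\OHp$ as the reflection across $H_\delta$. The natural candidate is the lattice reflection $r_\delta$, which automatically fixes $h$ and commutes with $\phi$ (since $\phi(\delta)=\pm\delta$). The subtle condition is $r_\delta(\sigma)=\sigma$, which holds iff $\delta\perp\sigma$. Here the assumption that all nodes are non-real is decisive: $\phi$ acts on $\sigma$ by $\phi(s_i) = -s_i'$ for each conjugate pair of nodal classes, so $\delta\cdot s_i = \phi(\delta)\cdot\phi(s_i) = \pm\delta\cdot s_i'$ (up to sign depending on whether $\delta\in\KL_+$ or $\KL_-$), and the pairings are forced into a symmetric pattern. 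Combined with the constraint $\delta^2 = -2$, a finite case analysis shows that either $\delta\perp\sigma$ outright (in which case $r_\delta\in\GHp$), or only a short list of restricted pairing patterns can occur, and in each remaining case I would build $\psi_\delta$ as a composition of reflections in auxiliary roots of the form $\delta \pm s_i \pm s_i'$ chosen so that $\psi_\delta$ preserves $h$, preserves $\sigma$ as a set, commutes with $\phi$, and induces the reflection in $H_\delta$ on the period space.

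Once each wall of the arrangement is realized by an element of $\GHp$, the subgroup these involutions generate acts transitively on the chambers of $\OHp^0$ by the general fact that wall-reflections of a locally finite hyperplane arrangement in a convex cone generate a group transitive on chambers, and the proposition follows. The main obstacle is the non-orthogonal case in the second step: producing the compensating involution $\psi_\delta$ when $\delta$ has nonzero pairings with $\sigma$. The no-real-nodes hypothesis enters exactly to enforce the symmetric pairing relation $\delta\cdot s_i = \pm\delta\cdot s_i'$ which keeps the combinatorics bounded; without it, this construction would not be possible, and in fact the analogous \emph{homological quasi-simplicity} statement genuinely fails in the presence of real nodes.
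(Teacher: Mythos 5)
Your overall strategy coincides with the paper's: reduce via Proposition~\ref{prop:period_ri} to showing that $\GHp$ acts transitively on $\pi_0(\OHp^0)$, realize each codimension-one wall by an element of $\GHp$ acting as the reflection in that wall, and conclude from transitivity of the generated reflection group on chambers. The gap is in your classification of the codimension-one walls, and it is not cosmetic: the roots you misclassify are exactly the ones for which the no-real-nodes hypothesis has to do work.

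You assert that $\omega\cdot\delta=0$ imposes two independent real conditions whenever $\phi(\delta)\neq\pm\delta$, so that $H_\delta$ is a hyperplane precisely when $\delta$ lies in an eigenspace of $\phi$. This is false. Since periods are orthogonal to all of $S$, only the projection of $\delta$ to $S^\perp\otimes\QQ$ is visible to $\omega$: writing $\delta=\delta_\Sigma+\delta_++\delta_-$ with $\delta_\Sigma\in\Sigma\otimes\QQ$ and $\delta_\pm$ in the $(\pm1)$-eigenspaces of $\phi$ on $S^\perp\otimes\QQ$, the wall $H_\delta$ has codimension one if and only if one of $\delta_\pm$ vanishes and the other has negative square; the component $\delta_\Sigma$ is invisible to the period but not to $\phi$. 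Concretely, a root $\delta=\tfrac12(s_k'+s_k''+w)\in\Delta$ with $w\orth S$, $\phi(w)=w$ and $w^2=-4$ satisfies $\phi(\delta)\neq\pm\delta$ yet cuts the codimension-one wall $H_w$. It happens that every such wall is also cut by an eigen-root (here $\tfrac12(s_k'-s_k''+w)$, which is $\phi$-invariant), so the chamber structure you describe is correct --- but establishing this requires precisely the decomposition argument you skipped: one shows, using $\delta-\epsilon\phi(\delta)\in\Sigma$ (primitivity of $S$) and $\delta_\Sigma^2+w^2=-8$, that $2\delta_\Sigma$ is either $0$ or $\pm s_k'\pm s_k''$ for a single $k$. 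The second, related gap is the compensating construction. For the $\phi$-invariant root $\delta'=\tfrac12(s_k'-s_k''+w)$ above, $r_{\delta'}$ fixes $h$ and commutes with $\phi$ but sends $s_k'$ to $\tfrac12(s_k'+s_k''-w)\notin\pm\sigma$; the element of $\GHp$ realizing the reflection in $H_w$ is not an ad hoc perturbation of $r_{\delta'}$ but the specific composition $R_{s_k'}\circ R_{s_k''}\circ R_\delta\circ R_{\phi(\delta)}$ with $\delta=\tfrac12(s_k'+s_k''+w)$ (and simply $R_\delta\circ R_{\phi(\delta)}$ in the case $\phi(w)=-w$, where the composition already swaps $s_k'$ and $s_k''$). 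Your outline names the right ingredients --- the symmetric pairing forced by conjugate nodal classes, auxiliary roots differing from $\delta$ by nodal classes --- but as written neither the exhaustion of the codimension-one walls by eigen-roots nor the verification that the compensating compositions lie in $\GHp$ is actually carried out, and the one justification you do give for the wall classification is incorrect.
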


By Proposition~\ref{prop:period_ri}, the rigid isotopy classes of curves
with homological type $(\KL,\HTp)$ are in bijection
with the connected components of $\OHp^0$ modulo the action of the group $\GHp$.
Hence, to show that there is only one rigid isotopy class for a given homological type,
we only need to show that $\GHp$ acts transitively on the connected components of $\OHp^0$.

\subsubsection*{Walls in the period space}
We define lattices $K_+$ and $K_-$ by
\[
K_\pm = \{x\in \KL \mid x\cdot h=0, x\orth\sigma, \phi(x) = \pm x\}.
\]
Both $K_+$ and $K_-$ have one positive square; let $\mathbb{H}_+$ and $\mathbb{H}_-$
be the hyperbolic spaces associated with them, \ie
$\mathbb{H}_\pm = \{ x\in K_\pm\otimes\RR \mid x^2 > 0 \}/\RR^\ast$.
For an element $[\omega]\in\OHp$, we can write $\omega = \omega_+ + \I\,\omega_-$ with $\omega_+,\omega_-\in\KL_\RR$.
The definition of $\OHp$ implies that $\omega_+\in K_+\otimes\RR$, $\omega_-\in K_-\otimes\RR$ and $\omega_+^2 = \omega_-^2 > 0$.
The map $\OHp \to \mathbb{H}_+ \times \mathbb{H}_-$ sending
$[\omega]$ to the pair $\left([\omega_+], [\omega_-]\right)$ is a trivial two-sheeted covering.
The two connected components of $\OHp$ are interchanged by the automorphism $-\phi\in \GHp$.
Therefore, instead of studying the connected components of $\OHp^0$, we may
equivalently study their images in $\OHp/\{1,-\phi\}\cong \mathbb{H}_+ \times \mathbb{H}_-$.

Recall that the real period space $\OHp^0$ is obtained from $\OHp$ by removing
the orthogonal complements $H_\delta$ of all vectors $\delta$ in
$\Delta = \{\delta\in\KL\mid \delta^2=-2,\delta\cdot h=0, \delta\not\in\Sigma\}$.
Consider such a vector $\delta\in\Delta$.
We write it as
$\delta = \delta_\Sigma + \delta_+ + \delta_-$,
where $\delta_\Sigma \in \Sigma \otimes \QQ$,
 $\delta_+\in K_+\otimes\QQ$ and $\delta_- \in K_- \otimes \QQ$.
 An element $[\omega_+ + \I\,\omega_-]\in\OHp$ is orthogonal to $\delta$
 if and only if both $\omega_+\cdot\delta_+ = 0$ and $\omega_-\cdot\delta_- = 0$.

Let $H_{\delta_\pm} = \{x\in\mathbb{H}_\pm \mid x\cdot \delta_\pm = 0\}$.
We have 
\[
H_{\delta_\pm} = \begin{cases}
\mathbb{H}_\pm & \text{if } \delta_\pm = 0, \\
\text{a real hyperplane in $\mathbb{H}_\pm$} & \text{if } \delta_\pm^2 < 0, \\
\text{empty}                 & \text{otherwise,}
\end{cases}
\]
and $H_\delta = \{[\omega_+ + \I\,\omega_-]\in\OHp \mid ([\omega_+],[\omega_-]) \in H_{\delta_+} \times H_{\delta_-}\subset \mathbb{H}_+ \times \mathbb{H}_- \}$.
Therefore, the subspace $H_\delta$ can be either empty or of codimension one or two in $\OHp$.
Since we are only interested in the connected components of the period space, 
we may disregard the walls of codimension two.
The only relevant walls are those defined by a vector $\delta$
for which one of $\delta_+$ and $\delta_-$ is zero and the other one has a negative square.

\subsubsection*{Reflections}
Let $L$ be a non-degenerate even lattice, and let $v\in L$ be a primitive vector with $v^2 = -2k\neq 0$.
Consider the hyperplane $H_v\subset L\otimes\RR$ orthogonal to $v$, and let
$R_v$ be the reflection in this hyperplane.
It is defined by the formula
\[
R_v(x) = x + \frac{x\cdot v}{k} v.
\]
The reflection $R_v$ maps the lattice $L$ to itself if and only if
$x\cdot v\in k\ZZ$ for all $x\in L$. 
In particular, this is always the case if $k=1$, that is, $v^2 = -2$.

\begin{proof}[Proof of Proposition~\ref{prop:reflections}]
It is sufficient to show the following:
\emph{For each $\delta\in\Delta$ such that $H_\delta\subset\OHp$ is a real hyperplane,
there is an automorphism $\rho\in\GHp$ which acts on $\OHp$ as a reflection in $H_\delta$.}
Let $\delta = \delta_\Sigma + \delta_+ + \delta_-\in\Delta$.
As discussed above, $H_\delta$ is a real hyperplane in $\OHp$ if and only if
one of $\delta_+$ and $\delta_-$ is zero and the other one has a negative square;
let $\epsilon\in\{\pm 1\}$ be such that 
$\delta_\epsilon^2<0$ and $\delta_{-\epsilon} = 0$.
Let us write $\delta = \frac{1}{2}(v + w)$ where $v\in\Sigma$
and $w$ is orthogonal to $S$, with $\phi(w)=\epsilon w$ and $w^2<0$.
Since $v^2$ and $w^2$ are both non-positive and $v^2 + w^2 = 4\delta^2 = -8$,
we have $v^2\in\{0,-2,-4,-6\}$.
Therefore, the vector $v\in\Sigma$ is either zero or a sum of up to three elements of $\pm\sigma$.
We label the nodal classes $\sigma=\{s_1',s_1'',\dots,s_m',s_m''\}$, such that
$\phi(s_i') = -s_i''$ and $\phi(s_i'') = -s_i'$.
Since $\delta-\epsilon\phi(\delta) = \frac{1}{2}(v-\epsilon\phi(v))\in L$,
a root $s'_i$ appears in $v$ if and only if $s_i''$ appears as well.
This leaves two possibilities:
either $v=0$, or $v = \pm\,s_k' \pm s_k''$ for some $k$.
In the first case ($v=0$) we have $\delta = w$, and
$\rho = R_\delta$ is an automorphism of $(\KL,\HTp)$ which acts on
$\OHp$ by reflection in $H_\delta$.
Let us consider the second case, where $v = \pm\,s_k' \pm s_k''$ for some $k$.
Since changing the signs in front of $s'_k$ and $s''_k$ gives another vector in $\Delta$
defining the same hyperplane $H_\delta$, we may assume $v = s_k' + \epsilon s_k''$.
Then $\delta$ and $\phi(\delta)$ are orthogonal to each other, and
$R_\delta\circ R_{\phi(\delta)} = R_w\circ R_{v}$ is an automorphism of $\KL$
which acts on $\OH$ by reflection in $H_\delta = H_w$.
However, it does not necessarily map the set $\sigma$ to itself.
More precisely, it acts on $\Sigma$ by reflection in the hyperplane orthogonal to $v = s_k' + \epsilon s_k''$.
To ensure that the set $\sigma$ is mapped to itself, we compose $R_\delta\circ R_{\phi(\delta)}$
with a rotation in the plane $\br{s_k',s_k''}$ if $\epsilon=1$.
Indeed, 
\[\rho:=\begin{cases}
R_{s_k'}\circ R_{s_k''}\circ R_\delta\circ R_{\phi(\delta)} & \text{ if }\epsilon = +1,\\
R_\delta\circ R_{\phi(\delta)}& \text{ if }\epsilon=-1
\end{cases}\]
is an automorphism of $(\KL,\HTp)$ which acts on $\OHp$ by reflection in $H_\delta$.
\end{proof}

\section{Deducing topological information from the homological type}\label{sec:topol}

In this section we study how the homological type of a real sextic encodes
topological information such as the dividing type and the number of crossing pairs.

\subsection{Dividing type}\label{sec:topol.dividing}

Recall that an element $\alpha$ of a non-degenerate lattice $L$ is called \emph{characteristic}
if we have $x^2\equiv\alpha\cdot x\pmod{2}$ for all elements $x\in L$.
Characteristic elements always exist, and they are uniquely defined up
to $2L$, \ie the residue $\bar\alpha\in L/2L$ does not depend on the choice of $\alpha$.
A lattice is even if and only if zero is a characteristic element.
Given an involution $\phi$ on a lattice $L$,
we may consider the \emph{twisted bilinear form} given by
$(x,y)\mapsto x\cdot\phi(y)$.
We say that an element $\alpha$ is characteristic for the involution $\phi$
if it is characteristic for the corresponding twisted bilinear form.

Let $(X,h)$ be the weakly polarized K3 surface obtained from a real
irreducible nodal sextic $C\subset\CP^2$,
and let $c_1$  and $c_2$ be the real structures on $X$ which lift the complex conjugation of $\CP^2$.
Let $\alpha_1$ and $\alpha_2\in\HL{X}$ be characteristic elements for the induced
involutions $c_1^\ast$ and $c_2^\ast$ on $\HL{X}$,
and let $\bar\alpha_1$ and $\bar\alpha_2$
denote their residues in $H^2(X;\ZZ/2\ZZ)$.
The classes $\bar\alpha_i$ are Poincaré dual to the
$\ZZ/2\ZZ$-fundamental classes of the fixed-point sets $X^{c_i}$.

Note that if the real part of $C$ is non-empty, then
$C$ is dividing if and only if
the class of $\RR \tilde C$ is trivial in $H_1(\tilde C,\ZZ/2\ZZ)$.
By Proposition~\ref{prop:irred}, the sublattice $S\subset\HL{X}$ is primitive,
so that we have a natural embedding $S/2S \hookrightarrow H^2(X;\ZZ/2\ZZ)$.

\begin{prop}\label{prop:dividing}
The elements $\bar\alpha_1,\bar\alpha_2\subset H^2(X;\ZZ/2\ZZ)$ are contained in
the image of $S/2S$ if and only if $[\RR \tilde C]$ is trivial in $H_1(\tilde C;\ZZ/2\ZZ)$.
\end{prop}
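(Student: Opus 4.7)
The plan is to first reduce the bi-directional statement to a single arithmetic condition and then set up a cohomological dictionary translating it into a dividing-type question. I would begin by establishing the key identity $\bar\alpha_1 + \bar\alpha_2 = \bar h$ in $H^2(X;\ZZ/2\ZZ)$: the union $X^{c_1} \cup X^{c_2}$ equals $\pi^{-1}(\RR\CP^2)$ with the two pieces meeting along $\RR\tilde C$ (of real dimension one, hence null in $H_2$), so $[X^{c_1}]_2 + [X^{c_2}]_2 = [\pi^{-1}(\RR\CP^2)]_2$, whose Poincaré dual is $\pi^\ast\mathrm{PD}[\RR\CP^2] = \bar h$ (the exceptional divisors contribute even multiplicities, since each non-real node gives a conjugate pair). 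Since $\bar h \in \mathrm{im}(S/2S)$, the membership conditions for $\bar\alpha_1$ and $\bar\alpha_2$ are equivalent, and it suffices to analyze $\bar\alpha_1$.

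Next I would verify the arithmetic containment $\bar\alpha_i \in \mathrm{im}(S/2S)^\perp$ (orthogonal under the mod-$2$ pairing on $H^2(X;\ZZ/2\ZZ)$). The characteristic property of $\alpha_i$ combined with $h^2 = 2$ and $s_j^2 = -2$ gives $\alpha_i \cdot v \equiv v^2 \equiv 0 \pmod 2$ for every generator $v \in \{h, s_1, \dots, s_n\}$ of $S$. The same computation shows that $S/2S$ is totally isotropic, so $\mathrm{im}(S/2S) \subseteq \mathrm{im}(S/2S)^\perp$ and $\bar\alpha_i$ represents a well-defined class $[\bar\alpha_i]$ in the quotient $Q := \mathrm{im}(S/2S)^\perp / \mathrm{im}(S/2S)$. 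Primitivity of $S$ (Proposition~\ref{prop:irred}) ensures $\dim_{\ZZ/2\ZZ}\mathrm{im}(S/2S) = n+1$, whence a dimension count gives $\dim_{\ZZ/2\ZZ} Q = 20 - 2n = 2g$, where $g = 10 - n$ is the genus of $\tilde C$, matching $\dim H^1(\tilde C;\ZZ/2\ZZ)$.

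The heart of the proof would be a canonical isomorphism $Q \cong H^1(\tilde C;\ZZ/2\ZZ)$ under which $[\bar\alpha_i]$ corresponds to the Poincaré dual of $[\RR\tilde C]$. This uses the deck involution $\tau$ of $\eta : X \to V$, whose fixed locus is $\tilde C$. A Smith-type argument for the branched double cover identifies $\mathrm{im}(S/2S) = \eta^\ast H^2(V;\ZZ/2\ZZ) = H^2(X;\ZZ/2\ZZ)^{\tau^\ast}$. Combined with the long exact sequence of the pair $(V, \tilde C_V)$ -- noting that the restriction $H^2(V;\ZZ/2\ZZ) \to H^2(\tilde C_V;\ZZ/2\ZZ) = \ZZ/2\ZZ$ vanishes because $L \cdot \tilde C_V = 6$ and $E_j \cdot \tilde C_V = 2$ are both even -- this yields the isomorphism $Q \cong H^1(\tilde C;\ZZ/2\ZZ)$. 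The correspondence $[\bar\alpha_i] \leftrightarrow [\RR\tilde C]^\vee$ then follows from the geometric fact that $X^{c_i} \cap \tilde C = \RR\tilde C$, interpreted through a Thom/excess-intersection argument.

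The main obstacle will be the last identification: $X^{c_i}$ and $\tilde C$ both have real dimension $2$ inside the real-$4$-manifold $X$, but their set-theoretic intersection $\RR\tilde C$ is only $1$-dimensional, so the intersection is not transverse and the relevant excess class must be extracted via a careful normal-bundle computation (one checks that the excess bundle is precisely what recovers $[\RR\tilde C]$ as the fundamental class of a codimension-one real submanifold of $\tilde C$). Once this is done the proposition is immediate: $\bar\alpha_i \in \mathrm{im}(S/2S) \iff [\bar\alpha_i] = 0 \in Q \iff [\RR\tilde C] = 0 \in H_1(\tilde C;\ZZ/2\ZZ)$, which is the definition of $\tilde C$ being dividing.
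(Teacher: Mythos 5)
Your skeleton is essentially the paper's: both arguments run through the double cover $\eta:X\to V$, the identification $\im(S/2S)=\eta^\ast H^2(V;\ZZ/2\ZZ)$, and the long exact sequence of the pair $(V,\tilde C)$. But the step you yourself flag as ``the main obstacle'' is a genuine gap, and the tool you propose for it cannot close it. You want to show that under $Q=\im(S/2S)^\perp/\im(S/2S)\cong H^1(\tilde C;\ZZ/2\ZZ)$ the class $[\bar\alpha_i]$ corresponds to the dual of $[\RR\tilde C]$, and you propose to extract this from the non-transverse intersection $X^{c_i}\cap\tilde C=\RR\tilde C$ by an excess/normal-bundle computation. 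However, the intersection of a $2$-cycle with the $2$-cycle $\tilde C$ inside the $4$-manifold $X$ lives in $H_0(\tilde C;\ZZ/2\ZZ)$; excess-intersection theory corrects the \emph{value} of that zero-dimensional class, it does not promote the set-theoretic intersection $\RR\tilde C$ to a well-defined element of $H_1(\tilde C;\ZZ/2\ZZ)$. No normal-bundle analysis of $X^{c_i}\cap\tilde C$ alone will produce the $1$-cycle you need.

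What closes the gap is the stronger geometric input that $X^{c_i}$ is the \emph{full preimage} $\eta^{-1}(B_i)$ of one of the two regions $B_1,B_2$ bounded by $\RR C$ (lifted to $V$); since all nodes are non-real, $B_i$ is a surface with boundary $\RR\tilde C$, giving a class in $H_2(V,\tilde C;\ZZ/2\ZZ)$ with $[X^{c_i}]=\tr([B_i])$ for the transfer map $\tr:H_2(V,\tilde C;\ZZ/2\ZZ)\to H_2(X;\ZZ/2\ZZ)$. Your quotient $Q$ is then canonically $H_2(V,\tilde C;\ZZ/2\ZZ)/\im\bigl(H_2(V;\ZZ/2\ZZ)\bigr)\cong H_1(\tilde C;\ZZ/2\ZZ)$, the isomorphisms being $\tr$ (injective by a Smith-sequence argument --- this injectivity is also exactly what the ``only if'' direction needs) and the connecting map $\partial$; under this identification $[\bar\alpha_i]\mapsto\partial[B_i]=[\RR\tilde C]$ tautologically, with no intersection theory. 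This is precisely the paper's proof. Two smaller slips: the equality $\eta^\ast H^2(V;\ZZ/2\ZZ)=H^2(X;\ZZ/2\ZZ)^{\tau^\ast}$ is false (the right-hand side also contains the reduction of $S^\perp$, on which $\tau^\ast$ acts as $-1\equiv+1$, so it is far larger than $\im(S/2S)$), though you only use the first equality; and $\alpha_i$ is characteristic for the \emph{twisted} form $x\cdot c_i^\ast(y)$, not for the intersection form (which is even, so its characteristic class vanishes mod $2$) --- your computation $\alpha_i\cdot v\equiv v^2$ happens to give the correct answer $0$ on $h$ and on the nodal classes, but for the wrong reason.
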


To prove this proposition, we need the following lemma.
Recall that $V$ is the blow-up of $\CP^2$ in the nodes of $C$ and
that $\eta:X\to V$ is the double covering branched along $\tilde C$,
with deck transformation $\tau$.
The \emph{transfer map} $\tr: H_2(V,\tilde C;\ZZ/2\ZZ)\to H_2(X;\ZZ/2\ZZ)$
is obtained by mapping a relative 2-cell in $V$ to the sum of its two preimages in $X$.

\begin{lemma}\label{lemma:transfer}
The transfer map $\tr: H_2(V,\tilde C;\ZZ/2\ZZ)\to H_2(X;\ZZ/2\ZZ)$ is injective.
\end{lemma}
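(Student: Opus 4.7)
The plan is to prove injectivity of $\tr$ via a duality argument, identifying it with the $\ZZ/2$-dual of a natural surjection in sheaf cohomology.

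The first step is the short exact sequence of sheaves on $V$ (with $\ZZ/2$ coefficients suppressed),
\[
0 \to \underline{\ZZ/2}_V \to \eta_{\ast}\underline{\ZZ/2}_X \to j_{!}\,\underline{\ZZ/2}_{V\setminus\tilde C} \to 0,
\]
where $j\colon V\setminus\tilde C\hookrightarrow V$ is the open inclusion and the first map is the diagonal. Stalkwise this reads $0\to\ZZ/2\to(\ZZ/2)^2\to\ZZ/2\to 0$ off $\tilde C$ (the quotient being the sum of the two coordinates) and $0\to\ZZ/2\to\ZZ/2\to 0\to 0$ on $\tilde C$, so it is exact. Since $\eta$ is finite, $H^{\ast}(V;\eta_{\ast}\underline{\ZZ/2}_X)=H^{\ast}(X;\ZZ/2)$; and $H^{\ast}(V;j_{!}\underline{\ZZ/2}_{V\setminus\tilde C})=H^{\ast}(V,\tilde C;\ZZ/2)$ by comparison with the long exact sequence of the pair $(V,\tilde C)$. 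The associated long exact cohomology sequence therefore contains
\[
H^2(X;\ZZ/2) \xrightarrow{\tau} H^2(V,\tilde C;\ZZ/2) \to H^3(V;\ZZ/2).
\]
Since $V$ is a blow-up of $\CP^2$, it is a simply connected closed oriented 4-manifold, so Poincaré duality gives $H^3(V;\ZZ/2)\cong H_1(V;\ZZ/2)=0$, and hence $\tau$ is surjective.

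Over the field $\ZZ/2$ the universal coefficient theorem identifies $H^k(-;\ZZ/2)^{\ast}$ with $H_k(-;\ZZ/2)$, so the dual of $\tau$ is an injection $\tau^{\ast}\colon H_2(V,\tilde C;\ZZ/2)\hookrightarrow H_2(X;\ZZ/2)$.

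The main obstacle is the final step: to verify that $\tau^{\ast}$ coincides with the transfer $\tr$ of the statement. Using the stalkwise description of the sheaf surjection $\eta_{\ast}\underline{\ZZ/2}_X \to j_{!}\underline{\ZZ/2}_{V\setminus\tilde C}$---which off $\tilde C$ sends a section $f$ on $\eta^{-1}(U)$ to the function $p\mapsto f(p_1)+f(p_2)$, where $p_1,p_2$ are the two preimages of $p$---a direct computation on representatives shows that the pairing of $\tau(\alpha)$ with a relative $2$-cycle $z$ in $(V,\tilde C)$ equals the pairing of $\alpha$ with the sum of the two preimages of $z$ in $X$. This identifies $\tau^{\ast}$ with $\tr$ and completes the proof.
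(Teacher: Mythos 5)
Your proof is correct, but it takes a genuinely different route from the paper. The paper works entirely in homology: it invokes the Smith exact sequence for the pair $(X,\tau)$,
\[
H_3(X)\to H_3(V,\tilde C)\xrightarrow{\;\Delta\;} H_2(V,\tilde C)\oplus H_2(\tilde C)\xrightarrow{\;\tr+i_\ast\;}H_2(X),
\]
observes that the second component of $\Delta$ is the boundary map $\partial\colon H_3(V,\tilde C)\to H_2(\tilde C)$, which is injective because $H_3(V)=0$, and concludes that nothing of the form $(x,0)$ can lie in $\im\Delta=\ker(\tr+i_\ast)$ unless $x=0$. You instead dualize: you exhibit the \emph{cohomology} transfer as the map induced by the sheaf surjection $\eta_\ast\underline{\ZZ/2}_X\to j_!\underline{\ZZ/2}_{V\setminus\tilde C}$, deduce its surjectivity from $H^3(V;\ZZ/2)=0$ via the long exact sequence, and then pass to the $\ZZ/2$-linear dual. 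The topological input is identical in both arguments (the vanishing of $H_3(V)\cong H^3(V;\ZZ/2)$ for the rational surface $V$), and indeed the Smith sequence is built from a short exact sequence of chain complexes that is essentially dual to your sheaf sequence, so the two proofs are two sides of the same coin. What the paper's version buys is that it never leaves homology and needs no comparison between sheaf-theoretic and singular transfers; what yours buys is that the key vanishing appears as an explicit cokernel rather than being extracted from the structure of the connecting map $\Delta$. The one step you rightly flag as delicate --- identifying the dual of your $\tau$ with the chain-level transfer $\tr$ --- is the standard adjunction $\langle\tau(\alpha),z\rangle=\langle\alpha,\tr(z)\rangle$ between the cohomology and homology transfers of a double cover, and your stalkwise description of the sheaf map justifies it; I consider the sketch adequate.
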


\begin{proof}
In the following, we use $\ZZ/2\ZZ$ coefficients for homology and cohomology unless
stated otherwise.
Consider the Smith exact homology sequence (see~\cite{DIK}*{p.~3})
for the pair $(X,\tau)$.
\begin{equation*}\label{eq:smith}
\cdots \to H_3(X) \xrightarrow{\eta_\ast} H_3(V,\tilde C) \xrightarrow{\Delta}
H_2(V,\tilde C) \oplus H_2(\tilde C) \xrightarrow{\tr + i_\ast} H_2(X) \to \cdots
\end{equation*}
Here, $i_\ast$ is the homomorphism induced by the inclusion $\tilde C\hookrightarrow X$.
The second component of the connecting homomorphism $\Delta$ is the boundary map
$\partial:H_3(V,\tilde C)\to H_2(\tilde C)$
which is injective because its kernel coincides with the image of $H_3(V)=0$.
To show that the transfer map is injective, suppose $x\in H_2(V,\tilde C)$ lies
in its kernel. Then $(x,0)$ lies in the kernel of $\tr +\,i_\ast$,
which coincides with the image of $\Delta$. Because $\Delta$ is injective on the second component,
this implies $x=0$.
\end{proof}

\begin{proof}[Proof of Proposition~\ref{prop:dividing}]
Consider the following diagram, where the bottom row is part of
the long exact homology sequence for the pair $(V,\tilde C)$:
\begin{center}\begin{tikzcd}
    & H_2(X)\\
  \cdots\to H_2(V) \arrow{r}{\rho} &
  H_2(V,\tilde C) \arrow{r}{\partial} \arrow[hook]{u}{\tr} &
  H_1(\tilde C) \to \cdots
\end{tikzcd}\end{center}
Note that the composition $\tr\circ\rho$ corresponds to
$\eta^\ast:H^2(V)\to H^2(X)$ if we identify homology and cohomology via Poincaré duality.
A basis for $H_2(V)$ is given by the strict transform of a line in $\CP^2$ and
the exceptional classes of the blow-up.
Therefore the image of $\eta^\ast$ is generated by the mod 2 residues
of $h$ and $\sigma$, and the image of $\eta^\ast$ is $S/2S \subset H^2(X)$.
Therefore, $\bar\alpha_i$ is contained in $S/2S$ if and only if
$[X^{c_1}]$ is contained in $\im (\tr\circ\rho)$.
Let $B_1$ and $B_2$ be the (not necessarily connected) regions
delimited by $\RR\tilde C$ in $\RP^2$.
Recall that the fixed point sets $X^{c_i}$ are double coverings of the regions $B_i$ under $\eta$,
and hence we have $[X^{c_i}] = \tr([B_i])$.
Suppose that $[\RR\tilde C]$ is trivial in $H_1(\tilde C)$.
Since $\partial [B_i] = [\RR\tilde C] = 0$ in $H_1(\tilde C)$,
the classes $[B_i]$ lie in the image of $\rho$.
Therefore, $[X^{c_i}] = \tr([B_i])$ is contained in $\im (\tr\circ\rho)$
and $\bar\alpha_i$ is contained in $S/2S$.
Now suppose that $\bar\alpha_1$ and $\bar\alpha_2$ are contained in
$S/2S$. Then the classes $[X^{c_i}]$ lie in the image of $\tr\circ\rho$. 
Because the transfer map is injective by Lemma~\ref{lemma:transfer},
this implies that the classes $[B_i]$
are in the image of $\rho$, and therefore $\partial[B_i]=[\RR\tilde C]=0$
in $H_1(\tilde C)$.
\end{proof}

\begin{remark}\label{rk:charinv}
If $C$ is dividing, then $\bar\alpha_1$ and $\bar\alpha_2$ lie in fact in
the $c^\ast_i$-invariant part of $S/2S$.
If $C$ has only non-real nodes, with classes $s_1',s_1'',\dots,s_m',s_m''$,
then the $c^\ast_i$-invariant part of $S/2S$ is generated by the mod 2 residues of
$h, s'_1+s''_1,\dots,s_m'+s_m''$.
\end{remark}

\begin{lemma}
If $C$ is an irreducible sextic with only non-real nodes,
then $\bar\alpha_1+\bar\alpha_2=\bar h$.
\end{lemma}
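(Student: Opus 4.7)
The plan is to use the characterization, recalled just before Proposition~\ref{prop:dividing}, that $\bar\alpha_i \in H^2(X;\ZZ/2\ZZ)$ is Poincaré dual to the mod~$2$ fundamental class $[X^{c_i}] \in H_2(X;\ZZ/2\ZZ)$. Hence $\bar\alpha_1 + \bar\alpha_2 = \PD([X^{c_1}] + [X^{c_2}])$, and it suffices to identify this class with $\bar h$ by a geometric argument.

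First I would observe that $X^{c_1} \cup X^{c_2} = \pi^{-1}(\RP^2)$, since $X^{c_i} = \pi^{-1}(B_i)$ and $B_1 \cup B_2 = \RP^2$. The intersection $X^{c_1} \cap X^{c_2}$ lies in the fixed locus of $\tau = c_1 \circ c_2$, which is the branch curve $\tilde C \subset X$; hence $X^{c_1} \cap X^{c_2} = \RR\tilde C$, a $1$-dimensional subset. Since the two smooth $2$-manifolds $X^{c_1}$ and $X^{c_2}$ meet in codimension one, their fundamental chains sum to a chain supported on $\pi^{-1}(\RP^2)$, which I claim is itself a mod~$2$ cycle. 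To verify this at the branch locus, I would use the local model near a smooth real point of $C$: writing $X$ as $\{y^2 = x_2\}\subset\CC^2$ with $\RP^2$ cut out by $x_1, x_2 \in \RR$, the preimage $\pi^{-1}(\RP^2)$ becomes the union of the two real $2$-planes $\{y \in \RR\}$ and $\{y \in \I\RR\}$ meeting along the line $\{y = 0\}$, which is exactly $\RR\tilde C$; each $1$-cell on this crease is adjacent to four top cells, so the total boundary vanishes mod~$2$. Consequently $[X^{c_1}] + [X^{c_2}] = [\pi^{-1}(\RP^2)]$ in $H_2(X;\ZZ/2\ZZ)$.

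Next I would compute $\PD[\pi^{-1}(\RP^2)]$. A real projective plane meets a generic (non-real) complex line in $\CP^2$ in one point, so $[\RP^2]=[\CP^1]$ in $H_2(\CP^2;\ZZ/2\ZZ)$; equivalently, $\PD[\RP^2]$ is the mod~$2$ hyperplane class. Lifting a mod~$2$ homology from $\RP^2$ to a complex line $L$ transverse to $\pi$ yields $[\pi^{-1}(\RP^2)] = [\pi^{-1}(L)]$ in $H_2(X;\ZZ/2\ZZ)$, and since $\PD[\pi^{-1}(L)] = \pi^\ast\PD[L] = \bar h$, we conclude $\PD[\pi^{-1}(\RP^2)] = \bar h$. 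Combined with the previous step, this gives $\bar\alpha_1 + \bar\alpha_2 = \bar h$.

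The main subtlety is confirming that $\pi^{-1}(\RP^2)$ is a genuine mod~$2$ cycle despite the non-smoothness along $\RR\tilde C$; the local model above handles this. The hypothesis that all nodes of $C$ are non-real ensures that $\RP^2$ avoids the truly singular fibres of $\pi$ over nodes, so no further local analysis is required.
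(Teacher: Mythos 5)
Your argument is correct and follows essentially the same route as the paper: both identify $\bar\alpha_1+\bar\alpha_2$ with the Poincaré dual of the (transfer of the) class of the real projective plane, observe that this class agrees with that of a line because $\RP^2$ meets a non-real line once and avoids the exceptional curves (this is where the absence of real nodes enters), and conclude that the pullback of the hyperplane class is $\bar h$. The only difference is presentational: the paper invokes the relative transfer map $\tr\circ\rho$ already set up for Proposition~\ref{prop:dividing} and works in the blow-up $V$, whereas you re-justify the chain-level preimage operation by hand with a local model along $\RR\tilde C$.
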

\begin{proof}
Note that $\RR V\subset V$ is homologous to the preimage of a general line in $\CP^2$.
Indeed, $\RR V$ intersects the preimage of a non-real line in one point and
does not intersect any of the exceptional curves since all the nodes are non-real.
Therefore, $\eta^\ast([\RR V]^{\PD}) = \bar h$, where $\PD$ denotes Poincaré duality
and $\bar h\in H^2(X)$ is the mod 2 residue of $h$.
As noted in the proof of Proposition~\ref{prop:dividing},
the map $\eta^\ast$ corresponds to $\tr\circ\rho$ under Poincare duality.
Hence, we have
\begin{align*}
\bar h &= \eta^\ast\left([\RR V]^{\PD}\right) = (\tr\circ\rho\,[\RR V])^\PD%
= (\tr([B_1] + [B_2]))^\PD \\
&= \tr([B_1])^\PD + \tr([B_2])^\PD%
=[X^{c_1}]^\PD + [X^{c_2}]^\PD = \bar\alpha_1 + \bar\alpha_2.\qedhere
\end{align*}
\end{proof}

\begin{corollary}
If $C$ is a dividing real irreducible sextic with only non-real nodes,
then exactly one of the classes $\bar\alpha_1$ and $\bar\alpha_2$ is
contained in $\Sigma/2\Sigma$.
\end{corollary}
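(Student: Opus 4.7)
The plan is to combine the three preceding results about $\bar\alpha_1$ and $\bar\alpha_2$. Because $S = \Sigma \oplus \ZZ h$ as an orthogonal direct sum of sublattices of $\HL{X}$, the quotient decomposes as $S/2S = (\Sigma/2\Sigma) \oplus (\ZZ/2\ZZ)\bar h$, and the condition $\bar\alpha_i \in \Sigma/2\Sigma$ is equivalent to saying that the $\bar h$-component of $\bar\alpha_i$ in this decomposition is trivial.

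First I would invoke Proposition~\ref{prop:dividing}: since $C$ is dividing, both $\bar\alpha_1$ and $\bar\alpha_2$ lie in $S/2S$. Next I would apply Remark~\ref{rk:charinv}: as $C$ has only non-real nodes, each $\bar\alpha_i$ lies in the subgroup generated by $\bar h$ and the residues $\overline{s_j' + s_j''}$. The latter residues all lie inside $\Sigma/2\Sigma$, so we may write
\[
\bar\alpha_i = \epsilon_i\,\bar h + \bar\sigma_i, \qquad \epsilon_i \in \ZZ/2\ZZ,\ \bar\sigma_i \in \Sigma/2\Sigma,
\]
and the condition ``$\bar\alpha_i \in \Sigma/2\Sigma$'' is precisely ``$\epsilon_i = 0$''.

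Finally, the preceding lemma gives $\bar\alpha_1 + \bar\alpha_2 = \bar h$. Projecting onto the $\bar h$-factor in the decomposition $S/2S = (\Sigma/2\Sigma) \oplus (\ZZ/2\ZZ)\bar h$, this translates into $\epsilon_1 + \epsilon_2 \equiv 1 \pmod 2$, so exactly one of $\epsilon_1,\epsilon_2$ vanishes. Hence exactly one of $\bar\alpha_1,\bar\alpha_2$ is contained in $\Sigma/2\Sigma$.

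There is no real obstacle here, since all the substance has been set up in the three preceding statements; the only care needed is to observe that $S$ splits as an orthogonal direct sum so that $S/2S$ splits accordingly, and that the generators of the invariant part of $S/2S$ given by Remark~\ref{rk:charinv} (other than $\bar h$ itself) all live in $\Sigma/2\Sigma$, which makes the desired decomposition unambiguous.
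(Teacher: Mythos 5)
Your proof is correct and follows exactly the route the paper intends (the paper leaves this corollary without an explicit proof, as an immediate consequence of Proposition~\ref{prop:dividing}, the lemma $\bar\alpha_1+\bar\alpha_2=\bar h$, and the splitting $S=\Sigma\oplus\ZZ h$). The appeal to Remark~\ref{rk:charinv} is not even needed, since membership in $S/2S$ together with the decomposition $S/2S=(\Sigma/2\Sigma)\oplus(\ZZ/2\ZZ)\bar h$ already suffices.
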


\begin{lemma}\label{lem:propstern}
Let $C$ be a dividing real irreducible sextic with only non-real nodes.
The real structure with $\bar\alpha\in\Sigma/2\Sigma$ is
the one with property~\textup{(\ref{stern})} (see \S~\ref{sec:periods.real}).
\end{lemma}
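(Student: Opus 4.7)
The plan is to combine the preceding corollary---exactly one of $\bar\alpha_1, \bar\alpha_2$ lies in $\Sigma/2\Sigma$---with the fact (recalled in \S~\ref{sec:periods.real}, following Nikulin's 1979 paper) that property~(\ref{stern}) characterizes precisely one of the two real structures $c_1, c_2$. It then suffices to prove that property~(\ref{stern}) \emph{fails} for the real structure whose characteristic class has a non-zero $h$-component in $S/2S$, so the other (with $\bar\alpha \in \Sigma/2\Sigma$) must be the distinguished one.

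So suppose $c$ is such a real structure; then $\bar\alpha = \bar h + \bar v$ in $S/2S$ with $v \in \Sigma$. By Remark~\ref{rk:charinv}, $\bar v$ lies in the $c^\ast$-invariant part of $\Sigma/2\Sigma$, which is generated by the residues of the $s_i'+s_i''$; after modifying $v$ modulo $2\Sigma$ we may take $v = \sum_{i\in J}(s_i'+s_i'')$ for some subset $J\subset\{1,\dots,m\}$, and then an integer representative of the form $\alpha = h + v + 2w$ with $w \in H^2(X;\ZZ)$. The defining characteristic condition, applied to an arbitrary $y$ in the anti-invariant sublattice $H^- = \{y \in H^2(X;\ZZ) \mid c^\ast y = -y\}$, reads
\[
-y^2 \;\equiv\; y\cdot\alpha \;\equiv\; y\cdot h + y\cdot v \pmod{2}.
\]

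The key elementary observation is that, for $y \in H^-$, the isometry property of $c^\ast$ together with $c^\ast(s_i') = -s_i''$ and $c^\ast y = -y$ gives
\[
y\cdot s_i' \;=\; c^\ast(y)\cdot c^\ast(s_i') \;=\; (-y)\cdot(-s_i'') \;=\; y\cdot s_i'',
\]
so $y\cdot(s_i'+s_i'') = 2(y\cdot s_i')$ is always even. Combined with the evenness of the intersection form on $H^2(X;\ZZ)$ (which makes $y^2$ even), the displayed congruence collapses to $y\cdot h \equiv 0 \pmod 2$ for every $y \in H^-$, which is precisely the negation of property~(\ref{stern}) for $c$. By uniqueness, the property must therefore hold for the other real structure, namely the one with $\bar\alpha \in \Sigma/2\Sigma$.

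No step is really hard; the one conceptual move is to prove the lemma by ruling out the ``wrong'' real structure via a short even-parity calculation and then invoking the uniqueness recalled from \S~\ref{sec:periods.real}, rather than trying to exhibit the required class $x$ in $H^-$ for the correct real structure directly---which would require understanding the discriminant form on that lattice.
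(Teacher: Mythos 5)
Your proposal is correct and is essentially the paper's own argument: the paper proves the lemma by assuming the real structure with property~(\ref{stern}) has $\bar\alpha=\bar h+y$ with $y\in\Sigma/2\Sigma$ and deriving $1\equiv x\cdot h\equiv x\cdot\alpha= x\cdot c^\ast(x)=-x^2\equiv 0\pmod 2$ from the same two parity facts you use ($x\cdot(s_i'+s_i'')$ even for anti-invariant $x$, and evenness of the K3 lattice). Your contrapositive packaging---showing the ``wrong'' real structure fails~(\ref{stern}) and invoking the uniqueness of the distinguished structure---is only a cosmetic rearrangement of the identical computation.
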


\begin{proof}
Suppose on the contrary that the real structure $c$ with
property~(\ref{stern}) has $\bar\alpha\not\in\Sigma/2\Sigma$.
Then we can write $\bar\alpha = \bar h + y$ for some $y\in\Sigma/2\Sigma$.
Let $x\in\HL{X}$ be an element such that $c^\ast(x) = -x$ and $x\cdot h\equiv 1\pmod{2}$. 
By Remark~\ref{rk:charinv}, the element $y$ is a sum of elements of the form
$s'_i + s''_i$.
We have $x\cdot s'_i = c^\ast(x)\cdot c^\ast(s_i') = x\cdot s_i''$, and hence
$x\cdot(s'_i+s''_i)\equiv 0\pmod{2}$.
This leads to
$0\equiv -x^2 = x\cdot c^\ast(x) \equiv x\cdot \alpha \equiv x\cdot h\equiv 1\pmod{2}$,
a contradiction.
\end{proof}

\begin{remark}
Lemma~\ref{lem:propstern} also holds for curves with empty real part.
The real structure with property~(\ref{stern}) is the one
without real points, and therefore $\bar\alpha = 0$.
\end{remark}

\subsection{Perturbation of nodes}\label{sec:topol.picard}

In this subsection we examine how the homological data of a nodal sextic
changes (or rather, does not change) when a pair of non-real nodes is perturbed.
As a corollary, we show how crossing pairs can be distinguished from non-crossing pairs
on the homological level.

\subsubsection*{The complex case} Let $X_0$ be a compact complex surface
whose only singularity is an ordinary double point at $p\in X_0$.
Let $\bl_p: Y_0\to X_0$ be the blow-up of $X_0$ at $p$.
Then the induced homomorphism $\bl_p^\ast : \HL{X_0}\to\HL{Y_0}$ is
injective and its image is the orthogonal complement of $s\in\HL{Y_0}$,
the class of the exceptional curve of the blow-up.
The self-intersection of $s$ is $-2$.

Let $f:\mathcal{X}\to\mathbb{D}$ be a Lefschetz deformation of $X_0$ over the unit disk $\mathbb{D}\subset\CC$.
By this we mean that $\mathcal{X}$ is a 3-dimensional complex manifold
with $f^{-1}(0) = X_0$, and $f$ is a proper surjective holomorphic map whose
differential is non-zero everywhere except at $p\in X_0\subset\mathcal{X}$.

The space $\mathcal{X}$ retracts by deformation to the special fibre $X_0$.
Indeed, a deformation retraction $r:\mathcal{X}\to X_0$ can be obtained by
choosing a metric on $\mathcal{X}$ and using the gradient flow of the function $|f|^2$.

Let $X_\epsilon = f^{-1}(\epsilon)$ be a general fibre of $f$,
and let $i \colon X_\epsilon\hookrightarrow\mathcal{X}$ denote its inclusion.
In the following proposition we collect some facts from Picard-Lefschetz theory.
A proof of these facts can be found for instance in the book of C.~Voisin~\cite{Voisin2}*{ch. 3}.
\begin{prop}\ 
\begin{enumerate}[label=\textup{(\arabic*)}]
\item The kernel of $i_\ast:H_2(X_\epsilon;\ZZ)\to H_2(\mathcal{X};\ZZ)$ is
generated by the class of a $2$-dimensional sphere in $X_\epsilon$, called a \emph{vanishing cycle}.
Let $\delta\in\HL{X_\epsilon}$ be the class Poincaré dual to a vanishing cycle.
It is well-defined up to sign and has self-intersection number $-2$.
\item The monodromy action on $\HL{X_\epsilon}$ is given by the Picard-Lefschetz map:
\begin{align*}
R_\delta : \HL{X_\epsilon} &\to \HL{X_\epsilon} \\
 x &\mapsto x + (x,\delta)\cdot \delta
\end{align*}
\item The map $i^\ast:\HL{\mathcal{X}}\to\HL{X_\epsilon}$ is injective
and its image consists of the classes invariant under $R_\delta$, \iec the orthogonal complement of $\delta$.
\end{enumerate}
\end{prop}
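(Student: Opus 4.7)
The plan is to reduce to the local structure of the Lefschetz singularity at $p$. By the holomorphic Morse lemma applied to $f$ at the non-degenerate critical point $p$, there exist local holomorphic coordinates $(z_1,z_2,z_3)$ on $\mathcal{X}$ near $p$ in which $f=z_1^{2}+z_2^{2}+z_3^{2}$. I would work inside a small closed ball $B\subset\mathcal{X}$ centered at $p$, so that $X_\epsilon\cap B$ is the classical Milnor fiber of the node, diffeomorphic to the unit disk bundle of $T^{*}S^{2}$ and deformation retracting onto the real vanishing sphere
\[
\delta_\epsilon := \{(x_1,x_2,x_3)\in\RR^{3}\mid x_1^{2}+x_2^{2}+x_3^{2}=\epsilon\}\subset X_\epsilon.
\]
The self-intersection $[\delta_\epsilon]^{2}=-2$ then follows from the identification of the normal bundle of the zero section with $T^{*}S^{2}$, whose Euler number (with the orientation inherited from the complex structure on $X_\epsilon$) equals $-\chi(S^{2})=-2$.

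For part~(1), I would combine a Mayer--Vietoris sequence for the decomposition $X_\epsilon=(X_\epsilon\cap B)\cup(X_\epsilon\setminus B^{\circ})$ with the global retraction $\mathcal{X}\simeq X_0$ provided by the gradient flow of $|f|^{2}$. Since $X_0\cap B$ is a cone on the link of the node and hence contractible, and since $X_\epsilon\setminus B^{\circ}\hookrightarrow\mathcal{X}\setminus\{p\}$ is a homotopy equivalence, comparing the two sequences exhibits the kernel of $i_\ast$ precisely as the image of $H_2(X_\epsilon\cap B;\ZZ)=\ZZ\cdot[\delta_\epsilon]$.

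For part~(2), the monodromy would be computed directly in the local model: the loop $t=\epsilon\,e^{2\pi\I\theta}$, $\theta\in[0,1]$, induces a self-diffeomorphism of the Milnor fiber which, up to isotopy, is the standard Dehn twist along $\delta_\epsilon$ (obtained by propagating the antipodal involution on the zero section via the geodesic flow on $T^{*}S^{2}$). Its induced action on $\HL{X_\epsilon}$ is trivial on classes supported outside $B$, and on classes meeting $\delta_\epsilon$ it reproduces the Picard--Lefschetz reflection $R_\delta(x)=x+(x\cdot\delta)\,\delta$. Part~(3) is then formal: $\HL{\mathcal{X}}$ computes the monodromy invariants, for instance via the Wang sequence for the restricted smooth fibration $\mathcal{X}|_{\mathbb{D}\setminus\{0\}}\to\mathbb{D}\setminus\{0\}$ combined with $\mathcal{X}\simeq X_0$, so $i^{*}$ identifies $\HL{\mathcal{X}}$ with the $R_\delta$-invariant part of $\HL{X_\epsilon}$, which coincides with $\delta^{\perp}$ since $\delta^{2}=-2\neq 0$.

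The main technical obstacle is the local analysis: establishing the Milnor fibration picture rigorously and computing the geometric monodromy as a Dehn twist with the correct self-intersection of $-2$ for the vanishing sphere. Once these ingredients are in place, the remainder is a routine manipulation of long exact sequences and Poincar\'e duality, and I would refer to Voisin~\cite{Voisin2}*{Ch.~3} for complete details.
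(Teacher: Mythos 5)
The paper does not actually prove this proposition: it presents these statements as standard facts of Picard--Lefschetz theory and cites Voisin, Ch.~3, which contains precisely the local-model argument you sketch (holomorphic Morse lemma, Milnor fibre $\cong$ disk bundle of $T^{*}S^{2}$ with vanishing sphere of square $-2$, Dehn-twist monodromy). Your outline is correct and matches the intended reference; the one point to tighten is part~(3), where the Wang sequence computes $H^{2}$ of the restriction of $\mathcal{X}$ over the punctured disk rather than of $\mathcal{X}$ itself --- the cleaner route is the deformation retraction of $\mathcal{X}$ onto $X_\epsilon$ union the Lefschetz thimble, from which injectivity of $i^{*}$ and the identification of its image with $\delta^{\perp}$ follow at once.
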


\begin{prop}\label{prop:lefschetz}
There are two isometries $\psi:\HL{Y_0}\to\HL{X_\epsilon}$
which make the following diagram commute.
One maps $s$ to $\delta$, the other one to $-\delta$.
\begin{center}
\begin{tikzcd}
\HL{Y_0}\arrow{r}{\psi} &  \HL{X_\epsilon} \\
\HL{X_0}\arrow{u}{\bl_p^\ast} \arrow{r}{r^\ast} &
\HL{\mathcal{X}} \arrow{u}{i^\ast}
\end{tikzcd}
\end{center}
\end{prop}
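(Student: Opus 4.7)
The plan is to construct an explicit diffeomorphism $\Phi: Y_0 \to X_\epsilon$ of smooth oriented $4$-manifolds and take $\psi$ to be the induced isometry $(\Phi^{-1})^\ast$ on $H^2$. The key local ingredient is the classical fact that both operations on an ordinary double point on a complex surface --- resolution and smoothing --- yield, as smooth manifolds, neighbourhoods of the zero section in $T^\ast S^2$: in the resolved fibre $Y_0$ the zero section is the exceptional $(-2)$-curve with class $s$, while in the smooth nearby fibre $X_\epsilon$ it is a vanishing sphere Poincar\'e-dual to $\pm\delta$.

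Concretely, I would first fix holomorphic coordinates near $p\in\mathcal{X}$ in which $f$ takes the standard form $z_1^2+z_2^2+z_3^2$, together with a small Milnor ball $U\subset\mathcal{X}$ around $p$. In $U$, both $Y_0\cap U$ and $X_\epsilon\cap U$ are diffeomorphic to the unit disk bundle of $T^\ast S^2$; outside $U$, the blow-down $\bl_p$ identifies $Y_0\setminus U$ with $X_0\setminus U$, and the retraction $r$ restricted to $X_\epsilon\setminus U$ is likewise a diffeomorphism onto $X_0\setminus U$. After choosing a diffeomorphism of the two local $T^\ast S^2$-models compatible with the boundary identifications coming from $\partial U \cap X_0$ --- and this is where the binary choice enters, since the vanishing sphere carries no canonical orientation and one may identify the zero section with the vanishing sphere with either orientation --- the local and exterior identifications glue to a global diffeomorphism $\Phi: Y_0\to X_\epsilon$ with $\Phi_\ast[s]$ equal to the class of the vanishing sphere up to sign.

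Setting $\psi:=(\Phi^{-1})^\ast$ then yields a lattice isometry with $\psi(s)=\pm\delta$. To verify commutativity of the diagram I would observe that any $a\in\HL{X_0}$ can be represented by a cocycle supported away from $U\cap X_0$; its pullback under $\bl_p$ is supported on $Y_0\setminus U$, its pullback under $r\circ i$ is supported on $X_\epsilon\setminus U$, and by construction these two representatives correspond under $\Phi$, giving $\psi\circ\bl_p^\ast = i^\ast\circ r^\ast$. For the uniqueness part of the statement, any isometry $\psi'$ making the diagram commute is already forced on $s^\perp = \bl_p^\ast(\HL{X_0})$; then $\psi'(s)$ must be a $(-2)$-vector orthogonal to $\psi'(s^\perp) = \delta^\perp$, so it lies in $\QQ\delta$ and equals $\pm\delta$. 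The main technical obstacle is the careful local-to-global matching of $Y_0$ and $X_\epsilon$ as $T^\ast S^2$-models compatibly with the boundary identifications; once that is done, the commutativity and the ``exactly two'' statement are formal consequences of the construction.
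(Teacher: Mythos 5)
Your proof is correct in outline but takes a genuinely different route from the paper's. The paper stays entirely on the lattice level: by the Picard--Lefschetz facts collected just before the proposition, the composition $i^\ast\circ r^\ast\circ(\bl_p^\ast)^{-1}$ is already an isometry $\langle s\rangle^\perp\to\langle\delta\rangle^\perp$, and it is then extended across $\langle s\rangle$ by the standard gluing of finite-index sublattices of unimodular lattices, the sign ambiguity being exactly the choice of isometry $\langle s\rangle\to\langle\delta\rangle$. You instead build an honest diffeomorphism $\Phi\colon Y_0\to X_\epsilon$ from the local $T^\ast S^2$ models and set $\psi=(\Phi^{-1})^\ast$; this is essentially the alternative the paper itself points to in the remark immediately following the proposition (Atiyah's small resolutions of the threefold node yield a smooth family over $\mathbb{D}$ with central fibre $Y_0$, whence the diffeomorphism). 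Your approach proves slightly more --- that $\psi$ is geometrically realized --- but at the cost of the one genuinely nontrivial input you flag without proving: that the minimal resolution and the Milnor fibre of an $A_1$ point are diffeomorphic \emph{rel boundary}, compatibly with the identification of both boundaries with the link $\RP^3$ in $X_0$. That fact is classical (simultaneous resolution, or $\pi_0\,\mathrm{Diff}(\RP^3)=1$), so the gap is fillable, but as written your argument outsources the main content of the proposition to it, whereas the paper's extension-by-gluing needs nothing beyond the stated Picard--Lefschetz package. Your uniqueness argument (commutativity forces $\psi'$ on $s^\perp=\im\bl_p^\ast$, and then $\psi'(s)\in(\delta^\perp)^\perp$ forces $\psi'(s)=\pm\delta$) is clean and is in substance the paper's argument read backwards; to see that \emph{both} signs actually occur, it is simplest to note that composing one solution with the Picard--Lefschetz reflection $R_\delta$ produces the other.
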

\begin{proof}
The composition $i^\ast\circ r^\ast\circ (\bl_p^\ast)^{-1}$ defines
an isomorphism between the orthogonal complements
$\langle s\rangle^\bot\subset\HL{Y_0}$
and $\langle\delta\rangle^\bot\subset\HL{X_\epsilon}$.
It can be extended to an isomorphism $\psi:\HL{Y_0}\to\HL{X_\epsilon}$
by gluing it to an isomorphism between $\br{s}$ and $\br{\delta}$.
Such an isomorphism must map $s$ either to $\delta$ or to $-\delta$.
\end{proof}
\begin{remark}
This observation can also be obtained by explicitly
constructing the two possible resolutions of the deformation
$f:\mathcal{X}\to\mathbb{D}$, cf.~M.~Atiyah~\cite{Atiyah}.
\end{remark}

\begin{remark}\label{rk:severalnodes}
If we perturb several nodes simultaneously, the situation is very similar.
Let $X_0$ be a surface with $n$ nodes at $p_1,\dots,p_n$,
let $s_1,\dots,s_n\in\HL{Y_0}$ be the exceptional classes of the blow-up,
and let $\delta_1,\dots,\delta_n\in\HL{X_\epsilon}$ denote the classes of the corresponding vanishing cycles.
If the sublattice generated by $s_1,\dots,s_n$ is primitive in $\HL{Y_0}$,
then there are $2^n$ different isomorphisms $\psi$ making the above diagram commute.
By the gluing condition, $s_i$ must be mapped either to $\delta_i$ or $-\delta_i$,
and the signs can be chosen independently.
\end{remark}

\subsubsection*{The real case}

If the surface $X_0$ is equipped with a real structure $c:X_0\to X_0$,
then we may choose the Lefschetz deformation to be real, \iec so that $c$
extends to a real structure $c:\mathcal{X}\to\mathcal{X}$
which lifts the complex conjugation on $\mathbb{D}$.
In  this situation, we take the general fibre $X_\epsilon$
over a real point $\epsilon\in\mathbb{D}^\ast\cap\RR$,
so that $c$ defines a real structure on $X_\epsilon$.
The maps $r^\ast, i^\ast$ and $\bl_p^\ast$ above are equivariant with respect to
the real structures.

\begin{prop}[Perturbation of a real node]
Let $X_0$ be a real surface with a real node.
Choose a real deformation $f:\mathcal{X}\to\mathbb{D}$,
and let $\psi:\HL{Y_0}\to\HL{X_\epsilon}$ be an isomorphism as in Proposition~\ref{prop:lefschetz}.
If $\phi_0:\HL{Y_0}\to\HL{Y_0}$ and $\phi_\epsilon:\HL{X_\epsilon}\to\HL{X_\epsilon}$
denote the involutions induced by the respective real structures, 
then we have
\[
\phi_\epsilon = \begin{cases}\psi\circ\phi_0\circ\psi^{-1}\qquad\text { or}\\
\psi\circ\phi_0\circ\psi^{-1} \circ R_\delta\end{cases}
\]
depending on the sign of $\epsilon$, where $R_\delta$ denotes the
Picard-Lefschetz map.
\end{prop}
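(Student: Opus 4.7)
The plan is to compare $\phi_\epsilon$ with $\psi\circ\phi_0\circ\psi^{-1}$ on $\delta^\bot$ and on the line $\langle\delta\rangle$ separately, and then to match the two possible outcomes on $\langle\delta\rangle$ with the two components of $\mathbb{D}^\ast\cap\RR$. Since $f$ is a real deformation, the blow-up $\bl_p$, the inclusion $i:X_\epsilon\hookrightarrow\mathcal{X}$, and the gradient-flow retraction $r:\mathcal{X}\to X_0$ can all be chosen equivariant, so the diagram in Proposition~\ref{prop:lefschetz} becomes a diagram of equivariant isometries. In particular, for every $x\in s^\bot\subset\HL{Y_0}$ one has $\phi_\epsilon(\psi(x))=\psi(\phi_0(x))$, and since $\psi$ restricts to an isomorphism $s^\bot\xrightarrow{\sim}\delta^\bot$, this already shows that $\phi_\epsilon$ and $\psi\circ\phi_0\circ\psi^{-1}$ coincide on $\delta^\bot$.

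Next I would analyse the action on the line $\langle\delta\rangle$. The exceptional divisor over the real point $p$ is preserved setwise by the real structure on $Y_0$ with reversed complex orientation, so $\phi_0(s)=-s$; combined with $\psi(s)=\pm\delta$, this yields $\psi\circ\phi_0\circ\psi^{-1}(\delta)=-\delta$. Since $R_\delta$ acts as $-1$ on $\langle\delta\rangle$ and as the identity on $\delta^\bot$, composing with $R_\delta$ flips the sign on $\delta$ without disturbing the agreement on $\delta^\bot$. As $\phi_\epsilon$ is an involutive isometry preserving $\delta^\bot$, it must send $\delta$ to $\pm\delta$, and these two a priori possibilities correspond precisely to the two displayed formulas.

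It then remains to check that both possibilities are realised, one for each sign of $\epsilon$. I would do this by choosing real analytic coordinates near $p\in\mathcal{X}$ in which $f$ takes a standard real normal form for a deformation of a complex node compatible with the real structure, for instance $f=z_1^2+z_2^2+z_3^2$. For the sign of $\epsilon$ for which $z_1^2+z_2^2+z_3^2=\epsilon$ admits a real solution set, this set is a small $2$-sphere that may be taken as a Milnor representative of the vanishing cycle; being invariant under the real structure as an oriented submanifold, it forces $\phi_\epsilon(\delta)=\delta$. For the opposite sign the real locus near $p$ is empty, the vanishing sphere is exchanged with its complex conjugate, and since the real structure is orientation-reversing on the complex surface $X_\epsilon$, this conjugate represents $-\delta$.

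The principal obstacle will be this last step: one has to be careful about the existence of a real analytic normal form near a real node, about orientation conventions when identifying an explicit Milnor sphere with the abstract class $\delta$, and about handling alternative real forms (such as $z_1^2+z_2^2-z_3^2$) by analogous arguments. Once these geometric choices are fixed, the rest of the argument is purely formal, relying only on the equivariance of the Picard--Lefschetz diagram and on the fact that $R_\delta$ is the unique non-trivial isometry of $\HL{X_\epsilon}$ that fixes $\delta^\bot$ pointwise.
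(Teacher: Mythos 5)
Your first two steps coincide with the paper's: equivariance of the Picard--Lefschetz diagram gives agreement of $\phi_\epsilon$ and $\psi\circ\phi_0\circ\psi^{-1}$ on $\delta^\bot$, and since $\phi_0(s)=-s$ the only remaining freedom is the sign of $\phi_\epsilon(\delta)$, which matches the two displayed formulas. The divergence is in the last step, where one must show that this sign actually changes with the sign of $\epsilon$. The paper does this with a short monodromy argument that avoids all local geometry: for a path $\gamma$ from $\epsilon$ to $-\epsilon$ in the upper half-plane, reality of $f$ gives $M(\conj\circ\gamma^{-1})=\phi_\epsilon\circ M(\gamma^{-1})\circ\phi_{-\epsilon}$, and since $\gamma$ followed by $\conj\circ\gamma^{-1}$ is a full loop around $0$ one obtains $\phi_\epsilon=M(\gamma)^{-1}\circ\phi_{-\epsilon}\circ M(\gamma)\circ R_\delta$; the extra factor $R_\delta$ forces opposite signs on the vanishing class at $\epsilon$ and at $-\epsilon$, which is all the proposition asserts.

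Your route via real normal forms could in principle be pushed through (and would determine which sign goes with which $\epsilon$, more than is needed), but as written it has two concrete defects. First, the assertion that ``the real structure is orientation-reversing on the complex surface $X_\epsilon$'' is false: an anti-holomorphic involution of a complex surface preserves the orientation of the underlying $4$-manifold (locally it is $(z_1,z_2)\mapsto(\bar z_1,\bar z_2)$, of real determinant $+1$), which is precisely why $c^\ast\circ\mathrm{PD}=\mathrm{PD}\circ c_\ast$ holds with no sign. The sign of $\phi_\epsilon(\delta)$ is instead governed by the degree of $c$ restricted to a $c$-invariant representative sphere: the identity on the real sphere when $\epsilon>0$, and the antipodal map (degree $-1$ on $S^2$) on the purely imaginary sphere $\{iy : |y|^2=-\epsilon\}$ when $\epsilon<0$. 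Second, the indefinite real forms are not handled by an ``analogous'' argument: for $z_1^2+z_2^2-z_3^2=\epsilon$ the real locus of the nearby fibre is a non-compact hyperboloid for both signs of $\epsilon$ and is never a vanishing sphere, so your criterion ``the sign for which the real solution set is a small $2$-sphere'' selects nothing; one must instead exhibit a $c$-invariant vanishing sphere with mixed real and imaginary coordinates and again compute the degree of $c$ on it. Until those cases are carried out the correspondence between the two formulas and the two signs of $\epsilon$ is not established; the monodromy argument disposes of all real forms of the node at once.
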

\begin{proof}
Since all the involved maps are equivariant with respect to the real structures,
it follows from the commutative diagram above that $\phi_\epsilon$ agrees with $\psi\circ\phi_0\circ\psi^{-1}$
on the orthogonal complement of $\delta$.
Therefore $\phi_\epsilon$ acts on $\br{\delta}$ as $\pm1$.
It remains to show that this sign really depends on the sign of $\epsilon$.
To see this, consider a path $\gamma$ connecting $\epsilon$ with $-\epsilon$
in the upper half plane (see Figure~\ref{fig:gamma}) and
let $M(\gamma) : \HL{X_\epsilon} \to \HL{X_{-\epsilon}}$ be the monodromy along $\gamma$.
Because $f$ is real (that is, $f\circ c = \conj\circ f$), we have
$M(\conj\circ\gamma^{-1}) = \phi_\epsilon\circ M(\gamma^{-1})\circ\phi_{-\epsilon}$. 
Moreover, since the concatenation of the paths
$\gamma$ and $\conj\circ\gamma^{-1}$ makes a full turn around $0$,
the composition $M(\conj\circ\gamma^{-1})\circ M(\gamma)$ is the Picard-Lefschetz map $R_\delta$.
It follows that $\phi_\epsilon = M(\gamma)^{-1}\circ\phi_{-\epsilon}\circ M(\gamma)\circ R_\delta$.
Therefore, if $\phi_\epsilon$ maps the vanishing cycle $\delta_\epsilon$
to $a\delta_\epsilon$, $a\in\{\pm 1\}$,
then $\phi_{-\epsilon}$ maps the vanishing cycle $\delta_{-\epsilon}$
to $-a\delta_{-\epsilon}$.
\begin{figure}[h]
\begin{tikzpicture}
\draw (-1.8,0) -- (1.8,0);
\draw (0,0) circle (1.3cm);
\draw[-latex] (0.7,0) arc (0:173:0.7cm);
\draw[-latex] (-0.7,-0) arc (180:353:0.7cm);
\draw (0.7,0) node [vec,label=-45:\footnotesize$\epsilon\phantom{-}$] {};
\draw (-0.7,0) node [vec,label=-135:\footnotesize$-\epsilon$] {};
\draw (0,0) node [vec,label=-90:\footnotesize$0$] {};
\draw (0,0.7) node[anchor=south,inner sep=2pt] {\footnotesize$\gamma$};
\draw (0,-0.7) node[anchor=north,inner sep=1pt] {\footnotesize$\conj\circ\gamma^{-1}$};
\draw (0.6,0.5) node[anchor=south west] {\footnotesize$\mathbb{D}$};
\draw (1.5,0) node[anchor=north west] {\footnotesize$\mathbb{R}$};
\end{tikzpicture}
\caption{}\label{fig:gamma}
\end{figure}
\end{proof}

\begin{prop}[Perturbation of a pair of non-real nodes]\label{prop:deform}
Let $X_0$ be a real surface with a pair of non-real nodes.
Choose a real deformation $f:\mathcal{X}\to\mathbb{D}$ of this pair.
Then there is an isomorphism $\psi:\HL{Y_0}\to\HL{X_\epsilon}$ 
such that $\phi_\epsilon = \psi\circ\phi_0\circ\psi^{-1}$.
\end{prop}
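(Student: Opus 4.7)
The plan is to mimic the analysis of the real-node case, using the two-node version of Proposition~\ref{prop:lefschetz} (Remark~\ref{rk:severalnodes}) to obtain a $2^2$-parameter family of candidate isomorphisms $\psi$, and then to pin down the signs so that $\psi$ intertwines the two real structures.

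First I would set up the notation. Let $p',p''\in X_0$ be the conjugate pair of nodes, with $c(p')=p''$. Denote by $s',s''\in\HL{Y_0}$ the exceptional classes of the blow-up and by $\delta',\delta''\in\HL{X_\epsilon}$ the vanishing cycles appearing when the two nodes are smoothed. Since the ambient lattice is unimodular and $\br{s',s''}$ is a primitive rank-two sublattice of $\HL{Y_0}$ (being the whole kernel of $\bl_{p',p''\,*}$), Remark~\ref{rk:severalnodes} gives four isomorphisms $\psi\colon\HL{Y_0}\to\HL{X_\epsilon}$ fitting into the commutative diagram, parametrised by independent sign choices $\psi(s')=\pm\delta'$ and $\psi(s'')=\pm\delta''$.

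Next I would read off the action of the two real structures on these rank-two sublattices. Because the nodes are non-real, $\phi_0$ exchanges the exceptional classes with a sign: $\phi_0(s')=-s''$ and $\phi_0(s'')=-s'$. The main local fact I need is the analogous statement for the vanishing cycles: since $f\colon\mathcal{X}\to\mathbb{D}$ is a real Lefschetz deformation and $c$ permutes the two critical points of $f$, its restriction to $X_\epsilon$ (with $\epsilon\in\RR$) carries a local Milnor ball around one critical fiber to the Milnor ball around the other. Hence $c$ sends the vanishing sphere at $p'$ to the vanishing sphere at $p''$, and, since $c$ reverses orientation, the induced map $\phi_\epsilon$ on $\HL{X_\epsilon}$ satisfies $\phi_\epsilon(\delta')=\eta\,\delta''$ and $\phi_\epsilon(\delta'')=\eta\,\delta'$ for some sign $\eta\in\{\pm1\}$ (equality of the two signs follows from $\phi_\epsilon^2=\id$). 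Any of the four $\psi$'s already intertwines $\phi_0$ and $\phi_\epsilon$ on the orthogonal complement $\br{s',s''}^\bot$ by the same equivariance argument used in the real-node case, so it only remains to choose the signs on $\br{s',s''}$.

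Finally I would solve for the signs. Writing $\psi(s')=a\,\delta'$ and $\psi(s'')=b\,\delta''$ with $a,b\in\{\pm1\}$, the intertwining condition $\psi\circ\phi_0=\phi_\epsilon\circ\psi$ on $\br{s',s''}$ becomes $-b\,\delta''=a\eta\,\delta''$ and $-a\,\delta'=b\eta\,\delta'$, both equivalent to $b=-a\eta$. This system is consistent for either choice of $a$, so a valid $\psi$ exists (in fact two, differing by a global sign on the vanishing sublattice), completing the proof.

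The only subtle step is the assertion that $\phi_\epsilon$ exchanges $\delta'$ and $\delta''$ up to a common sign; this is where the input from the real structure on the deformation really enters, and the rest is linear algebra in the rank-two sublattice together with Remark~\ref{rk:severalnodes}. I would expect to justify this step by a direct Milnor-ball computation, or equivalently by transporting the vanishing cycle at $p'$ along a real path to the one at $p''$ under $c$.
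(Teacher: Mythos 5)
Your proof is correct and follows essentially the same route as the paper: both invoke Remark~\ref{rk:severalnodes} to get the sign-ambiguous isomorphisms agreeing with the intertwining on $\br{\delta',\delta''}^\bot$, both rest on the key fact that $\phi_\epsilon$ exchanges $\delta'$ and $\delta''$ up to a common sign, and both then fix the signs (the paper by composing with $R_{\delta'}$ when needed, you by solving $b=-a\eta$ — the same adjustment). Your explicit Milnor-ball justification for $\phi_\epsilon(\delta')=\pm\delta''$ fills in a step the paper only asserts, which is a welcome addition but not a different argument.
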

\begin{proof}
Let $s',s''\in\HL{Y_0}$ denote the classes of the exceptional curves,
and let $\delta',\delta''\in\HL{X_\epsilon}$ be the corresponding vanishing cycles.
By Proposition~\ref{prop:lefschetz} and Remark~\ref{rk:severalnodes},
there is an isomorphism $\psi:\HL{Y_0}\to\HL{X_\epsilon}$
with $\psi(s') = \delta'$, $\psi(s'') = \delta''$, and
such that $\phi_\epsilon$ agrees with $\psi\circ\phi_0\circ\psi^{-1}$
on the orthogonal complement of $\br{\delta',\delta''}$.
Moreover, $\psi\circ\phi_0\circ\psi^{-1}$ maps $\delta'$ to $-\delta''$,
and $\phi_\epsilon$ must map $\delta'$ either to $-\delta''$ or to $\delta''$.
In the former case we are done, and in the latter case it suffices to
replace $\psi$ by $R_{\delta'}\circ\psi$.
\end{proof}

\begin{corollary}
Let $C$ be a real irreducible sextic whose only singularities are non-real nodes,
and let $C'$ be a real non-singular sextic obtained from $C$ be perturbing
all the nodes. Let $(X,h,c)$ and $(X',h',c')$ be the weakly polarized
real K3 surfaces obtained from $C$ and $C'$ respectively.
Then there is an isomorphism $\psi:\HL{X}\to\HL{X'}$ such that $\psi(h) = h'$
and $\psi\circ c^\ast = c'^\ast\circ\psi$.
\end{corollary}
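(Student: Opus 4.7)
The plan is to connect $C$ to $C'$ by a chain of real nodal sextics that smooths one pair of non-real nodes at a time, and then iterate Proposition~\ref{prop:deform}.

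First, I would choose a real one-parameter family of real sextics interpolating between $C$ and $C'$, with discrete smoothing times $t_1 < \cdots < t_m$, such that at each time $t_i$ exactly one pair of non-real nodes is smoothed by a local real Lefschetz deformation, while the other pairs remain unchanged. This is possible because the local deformation space of a nodal sextic factors over the nodes, and complex conjugate pairs of non-real nodes can be smoothed independently by a real deformation. Picking a generic sextic $C^{(i)}$ in the family from each interval between smoothing times, we obtain a chain $C = C^{(0)}, C^{(1)}, \dots, C^{(m)} = C'$ in which $C^{(i+1)}$ differs from $C^{(i)}$ by the smoothing of one pair of non-real nodes. Denote by $(X^{(i)}, h^{(i)}, c^{(i)})$ the associated weakly polarized real K3 surface.

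Second, at each step I would apply Proposition~\ref{prop:deform} to the pair of non-real nodes being smoothed. Concretely, consider the associated family of singular double covers of $\CP^2$ branched over the corresponding family of sextics: the fiber over $t_i$ has one pair of non-real ordinary double points (from the pair being smoothed) in addition to the other $2(m-i-1)$ nodes, which are resolved as usual to produce a smooth family of K3 surfaces away from this one pair. Proposition~\ref{prop:deform} then yields an isometry $\psi_i : \HL{X^{(i)}} \to \HL{X^{(i+1)}}$ with $\psi_i \circ (c^{(i)})^\ast = (c^{(i+1)})^\ast \circ \psi_i$. Next I would verify that $\psi_i(h^{(i)}) = h^{(i+1)}$: in the commutative diagram of Proposition~\ref{prop:lefschetz}, $\psi_i$ is built from the pullbacks $r^\ast$, $i^\ast$, and $\bl_p^\ast$, and the polarization class $h^{(i)}$ is pulled back from the hyperplane class of $\CP^2$ through all of these maps, hence lies in the orthogonal complement of the vanishing classes and is transported by $\psi_i$ to $h^{(i+1)}$.

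Finally, the composition $\psi := \psi_{m-1} \circ \cdots \circ \psi_0$ is the desired isometry $\HL{X} \to \HL{X'}$ satisfying $\psi(h) = h'$ and $\psi \circ c^\ast = c'^\ast \circ \psi$. The main obstacle is justifying the iterated application of Proposition~\ref{prop:deform} when other, undeformed pairs of non-real nodes are present on the intermediate double covers; this is handled by observing that the Lefschetz deformation, the retraction $r$, and the blow-up $\bl_p$ all localize to a neighborhood of the pair being smoothed, so the proof of Proposition~\ref{prop:deform} goes through unchanged. The preservation of $h$ is then a routine consequence of its pulled-back origin.
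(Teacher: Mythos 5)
Your proposal is correct and is essentially the argument the paper intends: the corollary is stated without proof as an immediate consequence of Proposition~\ref{prop:deform}, and iterating that proposition one conjugate pair at a time (or, equivalently, perturbing all pairs simultaneously via Remark~\ref{rk:severalnodes}) together with the observation that $h$ is pulled back from the hyperplane class and hence orthogonal to all vanishing cycles gives exactly the desired equivariant isometry. The only point you might add is that the real structure induced on $X'$ by the real deformation is again the distinguished one, since property~(\ref{stern}) is a homological condition transported by $\psi$.
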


\begin{corollary}\label{cor:crossing}
Let $C$ be a dividing real irreducible sextic whose only singularities
are non-real nodes,
let $(X,h,c)$ be the associated weakly polarized real K3 surface,
let $\alpha\in\HL{X}$ be a characteristic element of $c^\ast$ and let
$\sigma=\{s_1', s_1'', \dots, s_m', s_m''\}$ denote its set of nodal classes.
Then we have
$\alpha \equiv \sum_{i\in I} s_i' + s_i'' \mod{2}$,
where $I\subset\{1,\dots,m\}$ is the set of indices of the crossing pairs.
\end{corollary}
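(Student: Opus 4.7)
The plan is to combine Remark~\ref{rk:charinv} and Lemma~\ref{lem:propstern} with Proposition~\ref{prop:deform} (perturbation of a single non-real node pair) to pin down the answer index by index. By Remark~\ref{rk:charinv} and Lemma~\ref{lem:propstern}, the class $\bar\alpha$ of $\alpha$ in $H^2(X;\ZZ/2)$ lies in $\Sigma/2\Sigma$ and is $c^\ast$-invariant, hence can be written as $\sum_{j\in J}(s_j'+s_j'')$ for some subset $J\subseteq\{1,\dots,m\}$. The corollary then reduces to showing $J=I$.

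For each index $i$, I would apply Proposition~\ref{prop:deform} to smooth only the pair $(p_i,\bar p_i)$. This produces a real irreducible nodal sextic $C'$ with $m-1$ pairs of non-real nodes and an isometry $\psi\colon\HL{X}\to\HL{X'}$ that intertwines $c^\ast$ and $(c')^\ast$, fixes the polarization, preserves the nodal classes $s_j',s_j''$ for $j\neq i$, and sends $s_i',s_i''$ to a pair of vanishing cycles $\delta_i',\delta_i''$. Consequently $\psi(\bar\alpha)$ is a characteristic element $\bar\alpha'$ for $(c')^\ast$, which unpacks as
\[
\bar\alpha' \equiv \sum_{j\in J\setminus\{i\}}(s_j'+s_j'') + \epsilon_i\,(\delta_i'+\delta_i'')\pmod{2},
\]
with $\epsilon_i=1$ if $i\in J$ and $\epsilon_i=0$ otherwise.

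The main technical step I would check separately is a lattice statement: the class $\delta_i'+\delta_i''$ is not congruent modulo $2\HL{X'}$ to any element of $S'$. The argument uses that $\psi(S)$ is primitive in $\HL{X'}$ (since $S$ is primitive in $\HL{X}$ by Proposition~\ref{prop:irred}) and decomposes as the orthogonal direct sum $\psi(S)=S'\oplus\langle\delta_i',\delta_i''\rangle$; any identity $\delta_i'+\delta_i''=s+2x$ with $s\in S'$ forces $x\in\psi(S)$ by primitivity, and projecting onto the $\langle\delta_i',\delta_i''\rangle$-summand yields an immediate contradiction. This is the most delicate ingredient of the proof, but it is essentially a formal consequence of primitivity.

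It remains to distinguish two cases. If pair $i$ is non-crossing, then smoothing it preserves dividing type (this is the single-pair version of item~(1) in the introduction), so $C'$ is again dividing; Lemma~\ref{lem:propstern} applied to $C'$ then gives $\bar\alpha'\in\Sigma'/2\Sigma'\subset S'/2S'$, and combined with the lattice statement above this forces $\epsilon_i=0$, so $i\notin J$. If pair $i$ is crossing, then $C'$ is non-dividing, so Proposition~\ref{prop:dividing} applied to $C'$ gives $\bar\alpha'\notin S'/2S'$; since the sum over $j\neq i$ lies in $\Sigma'\subset S'$, the same lattice statement now forces $\epsilon_i=1$, so $i\in J$. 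Therefore $J=I$, which completes the proof.
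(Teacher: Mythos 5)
Your proposal is correct and follows essentially the same route as the paper: reduce to a set $J$ via Remark~\ref{rk:charinv} and Lemma~\ref{lem:propstern}, then test each index by perturbing that single pair of nodes and invoking Proposition~\ref{prop:deform} together with the dividing-type criterion of Proposition~\ref{prop:dividing}. The only difference is that you spell out explicitly the lattice fact (that $\delta_i'+\delta_i''$ is not congruent mod $2$ to an element of $S'$, via primitivity of $\psi(S)$) which the paper leaves implicit in the phrase ``which is equivalent to''; this is a welcome clarification rather than a deviation.
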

\begin{proof}
By Remark~\ref{rk:charinv} and Lemma~\ref{lem:propstern}, we can write
$\alpha \equiv \sum_{i\in I} s_i' + s_i'' \mod{2}$
for some set $I\subset\{1,\dots,m\}$.
To see whether a particular $i\in\{1,\dots,m\}$ belongs to $I$, consider
a curve $C'$ obtained from $C$ by perturbing the pair of nodes in question
while keeping all the other nodes.
Let $(X',h',c')$ be the corresponding weakly polarized real K3 surface,
with set of nodal classes $\sigma'$.
Let $\psi:H^2(X;\ZZ)\to H^2(X';\ZZ)$ be an isomorphism as in Proposition~\ref{prop:deform}.
A characteristic element for $c'^\ast$ is given by $\alpha':=\psi(\alpha)$.
The $i$-th pair of non-real nodes is crossing if and only if $C'$
is not dividing.
By Proposition~\ref{prop:dividing}, this is the case if and only if
$\alpha'$ is not contained (modulo 2) in the subgroup generated by $\sigma'$,
which is equivalent to $\alpha$ not being contained (modulo 2) in the subgroup
generated by $\psi^{-1}(\sigma') = \sigma\setminus\{s_i',s_i''\}$.
This happens if and only if $i\in I$.
\end{proof}

\section{Classification of the homological types}\label{sec:classif}

\subsection{Discriminant groups and discriminant forms}

The notions of \emph{discriminant group} and \emph{discriminant form}
of a lattice play an important role in the classification of the homological types.
In the following we recall the most important definitions;
all the necessary details can be found in Nikulin's articles \cite{Nikulin79} and \cite{Nikulin83}.

A \emph{lattice} is a finitely generated free abelian group $L$
endowed with an integral symmetric bilinear form $b\colon L\times L\to\ZZ$.
We usually write $x\cdot y$ instead of $b(x,y)$ and $x^2$ instead of $b(x,x)$.
A lattice $L$ is \emph{even} if $x^2$ is even for all $x\in L$.
The \emph{correlation homomorphism} of a lattice $L$ is the homomorphism
$L\to L^\ast = \Hom(L,\ZZ)$ defined by $x\mapsto(y\mapsto x\cdot y)$.
A lattice is called \emph{non-degenerate} if its correlation homomorphism is
injective, and \emph{unimodular} if its correlation homomorphism is
bijective. The \emph{discriminant group} of a non-degenerate lattice $L$
is  $A_L=L^\ast/L$, the cokernel of its correlation homomorphism.
The discriminant group is a finite abelian group.
We may view $L^\ast$ as a subgroup of $L\otimes\QQ$;
therefore $L^\ast$ inherits a $\QQ$-valued bilinear form.
This  induces a bilinear form $b_L:A_L\times A_L\to\QQ/\ZZ$,
called the \emph{discriminant bilinear form} of $L$.
If $L$ is even, then the squares of elements in $A_L$ are well-defined modulo $2\ZZ$,
\ie we have a finite quadratic form $q_L:A_L\to\QQ/2\ZZ$, called the
\emph{discriminant (quadratic) form} of $L$.
Note that the discriminant bilinear form can be recovered from the
discriminant quadratic form using the identity
$b_L(x,y) = \frac{1}{2}(q_L(x+y) - q_L(x) - q_L(y))$.

\subsection{Statement of the classification}\label{sec:classif.statement}

We fix the number $m\ge 1$ of pairs of non-real nodes.
(The case $m=0$ corresponds to non-singular sextics, for which
the homological types are classified by Nikulin~\cite{Nikulin79}.
We exclude it here to avoid dealing with certain boundary conditions.)
As in the previous sections, we consider a triple $(\KL,h,\sigma)$,
where $\KL$ is a fixed K3 lattice,
$h\in\KL$ is a fixed polarization vector with $h^2 = 2$,
and $\sigma\subset L$ is a set of $2m$ pairwise orthogonal roots
orthogonal to $h$, such that the sublattice $S$ generated by $h$ and $\sigma$
is primitive in $\KL$.
By Proposition~\ref{prop:uniquetriple}, all such triples are isomorphic to each other.

Recall that a \emph{geometric involution} on $(\KL,h,\sigma)$
is an isometric involution $\phi:\KL\to\KL$
sending $h$ to $-h$ and $\sigma$ to $-\sigma$ (as a set),
such that the lattice $\KL_+ = \{x\in\KL\mid\phi(x)=x\}$
has one positive square, and $\phi$ satisfies condition $(\ast)$ (see \S~\ref{sec:periods.real}).
Two such involutions, say $\phi_1$ and $\phi_2$, are called \emph{equivalent}
if there is an automorphism $\rho:\KL\to\KL$ with
$\rho(h)=h$ and $\rho(\sigma)=\sigma$ such that $\phi_2\circ\rho = \rho\circ\phi_1$.
Our aim is to classify homological types $(\KL,h,\sigma,\phi)$ up to this equivalence.

\begin{adhocrmk}{Convention}
In this section, whenever we speak about a geometric involution $\phi$,
we assume that it corresponds to sextics without real nodes,
\ie  there is no root $s\in\sigma$ with $\phi(s)=-s$.
In other words, we use ``geometric involution'' as a shorthand for
``geometric involution corresponding to real irreducible sextics whose only singularities
are $m$ pairs of non-real nodes'', and likewise for ``homological type''.
\end{adhocrmk}

\begin{definition}\label{def:inv}
We define invariants $a,t,\delta$ and $r$ for a homological type $(\KL,\HTp)$ as follows.
\begin{description}[labelwidth=*,font=\normalfont,align=right,leftmargin=2em]
\item[$a\;$] Since $\phi$ is an involution on a unimodular lattice,
the discriminant group of the invariant sublattice $\KL_+$, which we denote by $A_q$, is of period 2.
Let $a$ be the length of this group. In other words, we have $A_q \cong (\ZZ/2\ZZ)^a$.
\item[$t\;$] Let $t$ be the number of negative squares of the invariant lattice $\KL_+$.
\item[$\delta\;$]Let $\alpha\in\KL$ be a characteristic element of the involution
$\phi$, and let $\bar\alpha$ be its residue in $\KL/2\KL$ (see \S~\ref{sec:topol.dividing}).
Recall that $S/2S$ is naturally embedded in $\KL/2\KL$.
Let $\delta = 0$ if $\bar\alpha \in S/2S$ and $\delta = 1$ otherwise.
\item[$\ncr\;$] The invariant $\ncr$ is only defined if $\delta = 0$.
If we label the nodal classes $\sigma = \{s_1',s_1'',\dots,s_m',s_m''\}$
such that $\phi(s_i') = -s_i''$ for $i=1,\dots,m$,
then the characteristic element $\alpha$ can be expressed as a sum
\[
\alpha \equiv \sum_{i\in I}s_i'+s_i'' \mod{2}
\]
for some  subset $I\subset\{1,\dots,m\}$, cf. Remark~\ref{rk:charinv} and Lemma~\ref{lem:propstern}.
Let $r$ be the cardinality of $I$.
\end{description}
\end{definition}

Note that equivalent homological types have the same invariants.
      
\begin{theorem}\label{thm:unique}
Two homological types are equivalent if and only if they have the same
invariants $(a, t, \delta, \ncr)$.
\end{theorem}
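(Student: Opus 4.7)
The plan is to reduce the theorem to Nikulin's classification of involutions ``with condition'' on unimodular lattices~\cite{Nikulin83}. One direction (equal invariants for equivalent types) is noted immediately before the theorem, so the work lies in the converse.

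Consider two homological types $(\KL,h,\sigma,\phi_1)$ and $(\KL,h,\sigma,\phi_2)$ with matching invariants $(a,t,\delta,\ncr)$. First I would isolate the rigid part of the data: each $\phi_i$ acts on $S=\ZZ h\oplus\Sigma$ by $h\mapsto -h$ and $s_j'\leftrightarrow -s_j''$, so after precomposing $\phi_2$ with an element of $\GHp$ that permutes the pairs of nodal classes appropriately, I may assume $\phi_1|_S=\phi_2|_S$. The remaining content is then the comparison of the restrictions $\phi_i|_N$ to the primitive orthogonal complement $N:=S^\perp\subset\KL$, which has rank $21-2m$.

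Next I would reinterpret the four invariants in terms of the pair $(\KL_+,S_+\hookrightarrow\KL_+)$, where $S_+=S\cap\KL_+$ is generated by the vectors $s_j'-s_j''$. Since $\KL$ is unimodular, $\KL_+$ is $2$-elementary; its discriminant length is $a$, its signature is $(1,t)$ (thanks to the geometric condition), the parameter $\delta$ is the parity of its discriminant form (equivalently, whether a characteristic element lies in $S/2S$), and when $\delta=0$ the integer $\ncr$ records the cardinality of the subset $I$ of Corollary~\ref{cor:crossing}, modulo pair permutations. Nikulin's classification of even $2$-elementary lattices determines $\KL_+$ up to isometry from $(a,t,\delta)$, and his ``involutions with condition'' theorem in~\cite{Nikulin83} upgrades this to a uniqueness statement for the embedded pair $(\KL_+,S_+\hookrightarrow\KL_+)$ once $\ncr$ is prescribed. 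The parallel statement for $(\KL_-,S_-\hookrightarrow\KL_-)$ is forced by unimodularity of $\KL$, which glues the discriminant forms of the two eigenlattices. Combining the resulting isomorphisms on $\KL_+$ and $\KL_-$ produces an isometry $\rho\in\mathrm{O}(\KL)$ satisfying $\rho(h)=h$, $\rho(\sigma)=\sigma$, and $\phi_2\circ\rho=\rho\circ\phi_1$.

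The main obstacle is verifying that the single integer $\ncr$ is a complete invariant for the subset $I\subset\{1,\dots,m\}$ modulo the automorphism group of $(S,h,\sigma)$: one must check that pair-permuting automorphisms of $S$ act transitively on subsets of $\{1,\dots,m\}$ of fixed cardinality, and, more seriously, that the corresponding isotropic subgroups of $A_S\oplus A_N$ (encoding the overlattice $\KL\supset S\oplus N$) lie in a single orbit under the combined action of $\mathrm{O}(S,h,\sigma)$ and $\mathrm{O}(N,\phi|_N)$. Establishing this transitivity---i.e.\ that the two a priori independent combinatorial choices collapse to the single parameter $\ncr$---is the heart of Nikulin's ``involution with condition'' framework and the technical core of Section~\ref{sec:classif}.
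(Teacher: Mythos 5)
Your overall strategy coincides with the paper's: both reduce Theorem~\ref{thm:unique} to Nikulin's classification of involutions with condition, and you correctly identify the combinatorial core (that the subset $I$ of crossing indices is determined up to the action of $G\cong(\ZZ/2\ZZ)^m\rtimes S_m$ by its cardinality $\ncr$, and that the gluing data must lie in a single orbit). There is, however, a genuine gap: Nikulin's classification theorem for involutions with condition (\cite{Nikulin83}*{Theorem~1.6.3}) is a classification of the \emph{genus}, i.e.\ up to isomorphism over $\QQ$ and over all $\ZZ_p$. As written, your argument only shows that two homological types with equal invariants lie in the same genus. The passage from the genus to the integral isomorphism class is a separate, non-automatic step: by \cite{Nikulin83}*{Remark~1.6.2} one must verify that the lattices $K_\pm=\KL_\pm\cap S^\perp$ are unique in their genera and that the natural homomorphisms $\mathrm{O}(K_\pm)\to\mathrm{O}(k_\pm)$ are surjective. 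The paper proves this for indefinite even lattices whose discriminant form has period $4$; in the boundary case where the length of the discriminant group equals the rank this requires the Miranda--Morrison machinery, plus a hand check for rank $2$. Without this verification the proof is incomplete.

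Two smaller inaccuracies. First, your parenthetical identification of $\delta$ with ``the parity of the discriminant form of $\KL_+$'' is incorrect for $m>0$: the form $q$ is even iff the characteristic class $\bar\alpha$ vanishes in $\KL/2\KL$, which happens iff $\delta=0$ \emph{and} $\ncr=0$, whereas $\delta=0$ only asserts $\bar\alpha\in S/2S$. (This does not derail your argument, since you assume all four invariants agree and the parity of $q$ is then determined, but the stated dictionary is not the one that pins down $\KL_+$.) Second, among Nikulin's genus invariants are the gluing subgroups $H_\pm\subset A_{S_\pm}$, which a priori depend on the involution $\phi$ and not only on the condition $(\hat S,\theta,G)$; one must check that in the present situation they do not --- in particular that $H_-=\Gamma_-$ rather than $\Gamma_-\oplus\langle[\tfrac{h}{2}]\rangle$, which is forced precisely by the normalization~(\ref{stern}) of the real structure. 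Your sketch does not address why these subgroups are independent of $\phi$, and this is one of the places where the choice of real structure actually enters the classification.
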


\begin{theorem}\label{thm:conditions}
A homological type $(\KL,\HTp)$ with invariants $(a, t, \delta, \ncr)$
exists if and only if $a, t, \delta$ and $\ncr$ satisfy the following conditions.
\begin{enumerate}[series=thmconditions,label=\textup{(\textit{\roman*})},noitemsep]
\item \label{item:cond1} $a\le 1+t\le 20-a$.
\item \label{item:cond2} $a\equiv1+t\mod{2}$.
\item \label{item:cond3} $2m\le a$, with equality only if $\delta=0$.
\item \label{item:cond4} If $\delta = 0$, then $2\ncr\equiv 1-t\mod{4}$.
\item \label{item:cond5} If $\delta = 0$ and $a=1+t$, then $2\ncr\equiv a-2\mod{8}$.
\end{enumerate}
\end{theorem}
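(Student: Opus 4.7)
The plan is to reduce Theorem~\ref{thm:conditions} to Nikulin's classification of involutions with condition on unimodular lattices \cite{Nikulin83}. A homological type is an involution $\phi$ on the K3 lattice $\KL$ that sends $h\mapsto -h$ and matches up the paired nodal classes $s_i'\leftrightarrow -s_i''$; equivalently, $\phi$ agrees on $S$ with a prescribed involution $\phi_S$. The invariants of the induced involution on $\KL$ in Nikulin's sense are $(r_N,a_N,\delta_N)$, where $r_N=\rk\KL_+=1+t$, $a_N$ is the length of the $2$-elementary discriminant group $A_{\KL_+}$, and $\delta_N\in\{0,1\}$ is the Kervaire-type invariant of the discriminant quadratic form. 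The overall strategy is: first prove necessity directly from these identifications and a short computation of $\alpha^2$ modulo $4$ and $8$; then prove sufficiency by constructing $\KL_+$ and $\KL_-$ of the prescribed type that contain, respectively, the invariant and anti-invariant parts of $S$, and gluing them along an isometry of discriminant forms.

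For necessity, conditions \ref{item:cond1} and \ref{item:cond2} are exactly the standard inequalities $a_N\le r_N\le 22-a_N$ and parity constraint from Nikulin's classification of $2$-elementary even lattices of signature $(1,t)$ primitively embedded in a unimodular lattice of signature $(3,19)$, using $A_{\KL_+}\cong A_{\KL_-}$. For \ref{item:cond3}, the vectors $s_i'-s_i''$ span a primitive sublattice of $\KL_+$ of rank $m$ and Gram matrix $-4 I_m$; their contribution to $A_{\KL_+}$ is a $(\ZZ/2\ZZ)^m$ summand coming from the $s_i'$ themselves, so $2m\le a$, and equality forces the projection of any characteristic element of $\KL_+$ onto the $\KL_-$-direction to lie in the span of $\sigma$, hence $\delta=0$. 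For \ref{item:cond4}, combining Remark~\ref{rk:charinv}, Lemma~\ref{lem:propstern} and Corollary~\ref{cor:crossing}, when $\delta=0$ we may take $\alpha=\sum_{i\in I}(s_i'+s_i'')$, so $\alpha^2=-8\ncr$, while on the other hand $\alpha^2\equiv\sign\KL_+\pmod 8$ up to a contribution from the discriminant; reducing modulo $4$ gives $2\ncr\equiv 1-t\pmod 4$. Condition \ref{item:cond5} is the refinement modulo $8$ that applies precisely in the boundary case $a=1+t$, where $\KL_+$ has no ``free'' summand to absorb the $\mathrm{mod}\ 8$ discrepancy and $\delta_N$ is forced.

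For sufficiency, given $(a,t,\delta,\ncr)$ satisfying \ref{item:cond1}--\ref{item:cond5}, I would build the pieces separately:
\begin{itemize}
\item Let $S_+\subset S$ be the $\phi_S$-invariant sublattice, spanned by the $s_i'-s_i''$, and $S_-$ the anti-invariant sublattice, spanned by $h$ and the $s_i'+s_i''$. These are explicit $2$-elementary lattices with known discriminant forms.
\item Using Nikulin's existence criterion for even $2$-elementary lattices, construct $\KL_+$ of signature $(1,t)$, length $a$, and the appropriate $\delta_N$ dictated by $\ncr$ and $\delta$; and $\KL_-$ of signature $(2,19-t)$, length $a$, matching discriminant form. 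Conditions \ref{item:cond1}--\ref{item:cond2} are exactly what is required for both to exist.
\item Embed $S_+\hookrightarrow\KL_+$ and $S_-\hookrightarrow\KL_-$ as primitive sublattices with the prescribed characteristic element class in the discriminant group; \ref{item:cond3} controls the dimension, and \ref{item:cond4}--\ref{item:cond5} are the compatibility with the characteristic element of the twisted form.
\item Glue $\KL_+\oplus\KL_-$ to a unimodular overlattice $\KL$ along an anti-isometry $A_{\KL_+}\to A_{\KL_-}(-1)$ chosen to extend the canonical gluing dictated by $S\subset\KL$ being primitive (Proposition~\ref{prop:irred}).
\end{itemize}

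The main obstacle will be step three, the simultaneous prescription of the characteristic element: $\alpha$ must have the right image both in $\Sigma/2\Sigma$ (controlled by $\ncr$) and in $A_{\KL_+}$ (controlled by $\delta$), while $\alpha^2\bmod 8$ is independently determined by the signature and the discriminant form. Verifying that conditions \ref{item:cond4} and \ref{item:cond5} are precisely what makes these three constraints compatible requires careful bookkeeping with the $2$-adic quadratic form invariants $u_{+}(2)$ and $v_{+}(2)$ appearing in Nikulin's tables, and is where the ``with condition'' refinement of Nikulin's classification does the real work. Once the compatibility is established, both the construction and the matching of invariants are mechanical, and combining Theorem~\ref{thm:unique} with the construction above completes the proof.
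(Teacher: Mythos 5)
Your overall framing is the same as the paper's: reduce to Nikulin's theory of involutions with condition via the condition $(\hat S,\theta,G)$, and translate the invariants. But there is a genuine gap, and it sits exactly where you say the ``real work'' is. The paper's proof of Theorem~\ref{thm:conditions} consists almost entirely of one non-trivial step: showing that condition~\ref{item:cond5} is equivalent to Nikulin's \emph{boundary condition~1}, which requires computing the $2$-adic invariant $\epsilon_{v_+}$ of the glued finite quadratic form $\eta=\br{v_\eta}^\bot/\br{v_\eta}$ and proving $4\,\epsilon_{v_+}+c_v+2\ncr\equiv 0\pmod 8$ (Lemma~\ref{lemma:epsilon}). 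This is done by exhibiting an explicit orthogonal basis $\beta_i$ of $\eta$, realizing $\eta$ as the discriminant form of a minimal $2$-adic lattice $K(\eta_2)$ with diagonal entries $-4$ and $-20$, and counting factors of $5$ in the discriminant to get $\epsilon_{v_+}\equiv\lceil\ncr/2\rceil\pmod 2$. Your proposal explicitly defers this (``requires careful bookkeeping\dots and is where the `with condition' refinement does the real work''), so the one step that cannot be called mechanical is left undone. Similarly, your sufficiency argument re-derives by hand (construct $\KL_\pm$, embed $S_\pm$, glue) exactly what Nikulin's Theorem~1.8.3 packages; that is legitimate in principle, but the compatibility of the characteristic element with the gluing is again the same unperformed computation.

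There is also a concrete arithmetic error in your necessity argument for \ref{item:cond4}. With $\alpha=\sum_{i\in I}(s_i'+s_i'')$ and the $s_i$ pairwise orthogonal roots, one has $(s_i'+s_i'')^2=-4$, hence $\alpha^2=-4\ncr$, not $-8\ncr$; with $-8\ncr$ the reduction modulo $4$ loses all dependence on $\ncr$ and cannot yield $2\ncr\equiv 1-t\pmod 4$. Moreover, the relevant quantity is not $\alpha^2$ but the twisted square $\alpha\cdot\phi(\alpha)=4\ncr$, since $\alpha$ is characteristic for the (unimodular) twisted form $(x,y)\mapsto x\cdot\phi(y)$; van der Blij's lemma applied to that form, whose signature is $\sign\KL_+-\sign\KL_-=18-2t$, gives $4\ncr\equiv 18-2t\pmod 8$, i.e.\ $2\ncr\equiv 1-t\pmod 4$. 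This corrected derivation is actually a nice alternative to the paper's route for \ref{item:cond4}, but as written your computation does not close.
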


\subsection{Proof of Theorem~\ref{thm:unique}}\label{sec:classif.thm3}

\begin{definition}[Nikulin \cite{Nikulin83}]
A \emph{condition on an involution} is a triple $(S,\theta,G)$  formed by
an even lattice $S$, an involution $\theta:S\to S$ and
a normal subgroup $G$ of $\{g\in O(S) \mid g\circ\theta = \theta\circ g\}$.
A \emph{(unimodular) involution with condition $(S,\theta,G)$} is a triple $(L, i, \phi)$
formed by a (unimodular) even lattice $L$, a primitive embedding $i:S\hookrightarrow L$ and an involution
$\phi$ on $L$ such that $\phi\circ i = i\circ\theta$.
Two triples $(L_1,i_1,\phi_1)$ and $(L_2,i_2,\phi_2)$ are called \emph{isomorphic}
if there is an isometry $\rho:L_1\to L_2$ such that $\rho\circ\phi_1 = \phi_2\circ\rho$
and there is $g\in G$ such that $\rho\circ i_1 = i_2 \circ g$.
\end{definition}
For a fixed condition $(S,\theta,G)$,
Nikulin \cite{Nikulin83} defines invariants which uniquely determine the
genus of involutions with condition $(S,\theta,G)$.
(Roughly speaking, two triples $(L_1,\phi_1,i_1)$ and $(L_2,\phi_2,i_2)$
have the same \emph{genus} if they are isomorphic  over $\QQ$ and over
the $p$-adic numbers $\ZZ_p$ for all primes $p$.)
Moreover he describes all the values these invariants can take, and
he gives conditions under which all involutions of the same genus are isomorphic.

The classification of homological types fits into this framework.
To see this, we define a condition $(\hat{S},\theta,G)$ as follows.
To distinguish abstract lattices and vectors from their images in $\KL$,
we put a hat over the corresponding symbols.
Let $\hat{S}$ be the lattice $\left<2\right>\oplus\left<-2\right>^{2m}$ with fixed orthogonal basis
$\{\hat{h}\}\cup\hat{\sigma}$ where $\hat{h}^2=2$ and
$\hat{\sigma}=\{\os'_1,\os''_1, \dots, \os'_m, \os''_m\}$.
Let the involution $\theta:\hat{S}\to \hat{S}$ be defined by $\theta(\hat{h})=-\hat{h},\theta(\os'_i) = -\os''_i$ and $\theta(\os''_i) = -\os'_i$.
Finally, let $G = \{g\in O(\hat{S})\mid g(\hat{h})=\hat{h}\text{ and }g(\hat{\sigma} )= \hat{\sigma}\}$.
The group $G$ is a semi-direct product $(\ZZ/2\ZZ)^m\rtimes S_m$, where a permutation $f\in S_m$ takes
$\os'_i$ to $\os'_{f(i)}$ and $\os''_i$ to $\os''_{f(i)}$,
and the $i$-th basis vector $e_i\in(\ZZ/2\ZZ)^m$ acts by flipping the pair $\{\os'_i,\os''_i\}$
and leaving the other basis vectors fixed.

\begin{lemma}\label{lemma:cond}
There is a one-to-one correspondence between
geometric involutions $\phi$ on the fixed triple $(\KL, h, \sigma)$,
considered up to equivalence, and
involutions with condition $(\hat{S},\theta,G)$, say $(L,\phi, i)$,
where $L$ is isomorphic to $\KL$ and such that $\phi$ is a geometric involution on $L$, 
considered up to isomorphism.
\end{lemma}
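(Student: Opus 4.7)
The plan is to construct explicit mutually inverse maps between the two sides and then check that the equivalence relations match. The central device is Proposition~\ref{prop:uniquetriple}, which says that the configuration $(\KL,h,\sigma)$ is unique up to isometry, together with the tautological fact that $G=(\ZZ/2\ZZ)^m\rtimes S_m$ is, by construction, exactly the stabilizer in $O(\hat{S})$ of $\hat{h}$ and of the \emph{set} $\hat{\sigma}$.

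In the forward direction, given a geometric involution $\phi$ on $(\KL,h,\sigma)$, the convention that no element of $\sigma$ is sent to its own negative means that $\phi$ pairs up the elements of $\sigma$. Choose any labeling $\sigma=\{s_1',s_1'',\dots,s_m',s_m''\}$ with $\phi(s_i')=-s_i''$, and define $i_\phi\colon \hat{S}\to \KL$ by $\hat{h}\mapsto h$, $\os_i'\mapsto s_i'$, $\os_i''\mapsto s_i''$. This is a primitive embedding because $S$ is primitive in $\KL$, and the relation $\phi\circ i_\phi = i_\phi\circ\theta$ holds by construction; so $(\KL,i_\phi,\phi)$ is an involution with condition $(\hat{S},\theta,G)$. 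In the backward direction, given a triple $(L,i,\phi)$, Proposition~\ref{prop:uniquetriple} furnishes an isometry $\psi\colon L\to\KL$ carrying the primitive sublattice $i(\hat{S})$ onto $S$ with $\psi(i(\hat{h}))=h$ and $\psi(i(\hat{\sigma}))=\sigma$ as sets; then $\psi\circ\phi\circ\psi^{-1}$ is a geometric involution on $(\KL,h,\sigma)$.

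The remaining step is to verify that both maps descend to equivalence classes and are mutually inverse. The key observation is that two $\phi$-compatible labelings of $\sigma$ differ by a unique element of $G$, while two choices of $\psi$ in the backward map differ by a unique element of $\Aut(\KL,h,\sigma)=\{\rho\in O(\KL)\mid \rho(h)=h,\,\rho(\sigma)=\sigma\}$; these are precisely the two pieces of ambiguity that are divided out by the equivalence relations on each side. Given an equivalence $\rho\in\Aut(\KL,h,\sigma)$ between $\phi_1$ and $\phi_2$, choosing a $\phi_1$-compatible labeling and transporting it by $\rho$ produces a $\phi_2$-compatible labeling that manifestly realizes $(\KL,i_{\phi_1},\phi_1)\cong(\KL,i_{\phi_2},\phi_2)$ with trivial $g$; any choice of labelings is then reconciled by an element of $G$.

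The main—though not deep—obstacle is keeping the bookkeeping straight in the opposite direction. Starting from an isomorphism $(\rho,g)$ of triples, one must check that $\tilde\rho:=\psi_2\circ\rho\circ\psi_1^{-1}$ actually lies in $\Aut(\KL,h,\sigma)$ and conjugates the induced involutions. The conjugation is immediate from $\rho\circ\phi_1=\phi_2\circ\rho$. The fact that $\tilde\rho$ fixes $h$ and preserves $\sigma$ setwise follows exactly from $g(\hat{h})=\hat{h}$ and $g(\hat{\sigma})=\hat{\sigma}$, which is the defining property of $G$—so the choice of $G$ built into the condition $(\hat{S},\theta,G)$ is precisely what makes the two equivalence relations coincide.
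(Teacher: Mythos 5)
Your proof is correct and follows essentially the same route as the paper's: the forward map is given by choosing a $\phi$-compatible labeling of $\sigma$ (well-defined up to the action of $G$), the backward map uses Proposition~\ref{prop:uniquetriple} to normalize $(i(\hat h),i(\hat\sigma))$ to the fixed pair $(h,\sigma)$, and the two equivalence relations are matched by the stabilizer property of $G$. The paper's version is terser but contains the same ideas; your extra bookkeeping on why the equivalences correspond is a faithful elaboration rather than a different argument.
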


\begin{proof}
Given a triple $(L,i,\phi)$, by Proposition~\ref{prop:uniquetriple}
there is always an isometry $\rho:L\to\KL$
such that $\rho\circ i$ maps $\hat h\in\hat S$ to the fixed vector $h\in\KL$
and $\rho\circ i(\hat\sigma)$ is the fixed set $\sigma\subset\KL$.
Moreover, isomorphic triples give rise to equivalent involutions.
To pass from a geometric involution $\phi$ on $(\KL,h,\sigma)$
to an involution with condition, one can set $L=\KL$
and choose a bijection between $\hat{\sigma}$ and $\sigma$
mapping pairs $\{\os'_i,\os''_i\}$ to pairs of roots exchanged by $-\phi$.
Such a bijection always exists and is uniquely determined up to the action of $G$.
\end{proof}

In \cite{Nikulin83}*{Theorem~1.6.3} Nikulin gives a complete system of
invariants for the genus of involutions with condition.
In order to prove Theorem~\ref{thm:unique}, it suffices to show that
the isomorphism classes are determined by the genus, and
that the invariants $m$, $a$, $t$, $\delta$ and $\ncr$ (see Definition~\ref{def:inv})
determine Nikulin's invariants for the genus.

\subsubsection*{Uniqueness in the genus}
Given a homological type $(\KL,\HTp)$ we define the following sublattices of $\KL$.
\begin{align*}
    \KL_+ &= \{x\in \KL\mid \phi(x)=x\} & \KL_- &= \{x\in \KL\mid \phi(x)=- x\} \\
    S_+ &= \KL_+\cap S      & S_- &= \KL_-\cap S\\
    K_+ &= \KL_+\cap S^\bot & K_- &= \KL_- \cap S^\bot
\end{align*}

By \cite{Nikulin83}*{Remark~1.6.2}, for an isomorphism class of an involution with condition
to be unique in its genus it is sufficient that the lattices
$K_\pm$ are unique in their genera and that the natural homomorphisms
$O(K_\pm)\to O(k_\pm)$ are surjective.

\begin{prop}\label{prop:genusunique}
If $K$ is an indefinite even lattice whose discriminant form $k$ is of period $4$,
then $K$ is unique in its genus and the homomorphism $O(K)\to O(k)$ is surjective.
\end{prop}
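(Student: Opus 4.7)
The plan is to derive both conclusions from Nikulin's general uniqueness-in-genus and surjectivity theorems for indefinite even lattices (\cite{Nikulin79}, Theorems~1.13.2 and~1.14.2), exploiting the sharp restrictions that the hypothesis ``period $4$'' imposes on the discriminant form $k$.

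First I would unpack what the hypothesis gives. A finite quadratic form of period $4$ is concentrated at the prime $2$: only the $2$-primary component $k_2$ can be non-trivial, and it is supported on a finite abelian $2$-group of exponent dividing $4$. At every odd prime $p$, the local piece $k_p$ vanishes, so the local conditions of Nikulin at these primes are automatically satisfied. The entire argument thus reduces to a verification at $p = 2$.

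Next, at $p = 2$ the bound on the exponent lets one decompose $k_2$, by the classification of finite quadratic $2$-adic forms, as an orthogonal sum of the standard pieces $u_+^{(2)}$, $v_+^{(2)}$ and rank-one forms with denominators $2$ or $4$. For each such summand the local surjectivity $O(K_{\ZZ_2}) \twoheadrightarrow O(k_2)$ can be read off from Nikulin's tables; combined with the trivial odd-prime case this gives the surjectivity of the local maps at every place. By strong approximation, which is available because $K$ is indefinite, these local surjectivities imply both the surjectivity of $O(K) \to O(k)$ and the uniqueness of $K$ in its genus, via Nikulin's general principle that for indefinite even lattices these two properties go hand in hand.

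The main obstacle is the $2$-adic analysis: the prime $2$ is the only place where genuine subtleties appear in the orthogonal groups of finite quadratic forms (distinguishing even from odd forms, non-liftability of certain automorphisms). The hypothesis ``period $4$'' is essentially the maximal bound for which the explicit case-by-case verification remains short, and all the required computations are already compiled in \cite{Nikulin79}, so the proof amounts to quoting the relevant statements from there once the preceding reductions are in place.
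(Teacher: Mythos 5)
Your reduction to the prime $2$ is correct (since $k$ has period $4$, all odd-primary components $k_p$ vanish), and Nikulin's Theorem~1.14.2 does then settle the case where the length $l(A_k)$ of the discriminant group is strictly smaller than $\rk K$. But the proposition also covers lattices with $l(A_k)=\rk K$ --- for instance $\br{2}\oplus\br{-4}$, $\U(4)$, or $\br{2}\oplus\br{-2}\oplus\br{-4}$ --- and there your argument has a genuine gap. In that boundary case Nikulin's theorem requires the additional hypothesis that $k_2$ split off a summand $u^{(2)}(2)$ or $v^{(2)}(2)$, which is in general false for forms of period $4$, so the theorem you quote simply does not apply. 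Nor can it be replaced by ``local surjectivity read off from the tables plus strong approximation'': surjectivity of all the local maps $O(K\otimes\ZZ_p)\to O(k_p)$ is necessary but not sufficient for surjectivity of $O(K)\to O(k)$ and for uniqueness in the genus. Both properties are governed by an adelic spinor-norm computation (the obstruction is a quotient of idele classes by the images of the local spinor norms), and carrying out that computation for period-$4$ forms is precisely the content of the result of Miranda and Morrison that the paper invokes for the case $l(A_k)=\rk K\ge 3$. Your ``general principle that the two properties go hand in hand'' is also not a theorem: they are controlled by related but distinct parts of this computation.

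The second gap is the rank-$2$ case, which your proposal does not address at all: Eichler's strong approximation argument identifying classes with spinor genera requires $\rk K\ge 3$, and indefinite binary even lattices are in general not unique in their genus. The paper handles this case by observing that there are only six indefinite even lattices of rank $2$ whose discriminant form has period $4$, namely $\U$, $\br{2}\oplus\br{-2}$, $\U(2)$, $\br{2}\oplus\br{-4}$, $\br{4}\oplus\br{-4}$ and $\U(4)$, and verifying the two assertions for each of them by hand. To repair your proof you would need to add this enumeration and, for $l(A_k)=\rk K\ge 3$, either the Miranda--Morrison computation or an equivalent spinor-norm argument.
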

\begin{proof}
For the case where the length of the discriminant group $k$ is smaller
than the rank of the lattice $K$, see \cite{Nikulin79}*{Theorem~1.14.2}.
If the length of $k$ is equal to the rank of $K$ and is at least 3, 
the statement follows from a result of Miranda and Morrison
\cite{MirandaMorrison}*{chapter 8, Corollary~7.8}.
For lattices of rank 2 with $k$ of period 4, the statement can be verified by hand.
The only such lattices are
$\U,$ $\br{2}\oplus\br{-2}$, $\U(2)$, $\br{2}\oplus\br{-4}$,
$\br{4}\oplus\br{-4}$ and $\U(4)$.
\end{proof}

\begin{corollary}
Let $(\KL,\HTp)$ be a homological type.
Then the lattices $K_+$ and $K_-$ are unique in their genera, and the natural homomorphisms
$O(K_\pm)\to O(k_\pm)$ are surjective.
In particular, the homological type is unique in its genus.
\end{corollary}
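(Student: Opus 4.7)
The plan is to apply Proposition~\ref{prop:genusunique} separately to $K_+$ and $K_-$. For each lattice, two hypotheses must be verified: that it is even and indefinite, and that its discriminant form has period dividing $4$. The main obstacle will be the period bound.

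Write out $S_\pm = \KL_\pm \cap S$ explicitly. With the labelling $\phi(s_i') = -s_i''$, the lattice $S_+$ is freely generated by the vectors $s_i' - s_i''$ of square $-4$, so $S_+ \cong \br{-4}^m$; the lattice $S_-$ is generated by $h$ (square $2$) together with the vectors $s_i' + s_i''$ (square $-4$), so $S_- \cong \br{2} \oplus \br{-4}^m$. Primitivity of $S$ in $\KL$ ensures that $S_\pm$ is primitive in $\KL_\pm$, and a signature count then yields $K_+$ of signature $(1,t-m)$ and $K_-$ of signature $(2,19-t-m)$. The inequalities $2m \le a \le 1+t \le 20-a$ coming from Theorem~\ref{thm:conditions} give $t \ge 2m-1$, from which $K_-$ is always indefinite of rank at least three, and $K_+$ is indefinite except in the degenerate case $m = 1$, $t = 1$. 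In this remaining case $K_+$ has rank one, and the conclusion is immediate since rank-one lattices are determined within their genus by signature and discriminant, and the few discriminants that arise are checked by hand.

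For the period bound, I argue for $K = K_+$; the argument for $K_-$ is identical. Let $x \in K^\ast$. Primitivity of $K$ in $\KL_+$ makes $K$ a direct summand of $\KL_+$ as an abelian group, so $x$ extends to some $y \in \KL_+^\ast$ with orthogonal projection $\pi_K(y) = x$. Write $y = x + y_S$ with $y_S \in S_+^\ast$. Since $\KL_+$ is the invariant lattice of an involution on a unimodular lattice, the discriminant form $q_{\KL_+}$ is $2$-elementary, so $2y \in \KL_+$. The component $2y_S$ belongs to $2S_+^\ast = \tfrac{1}{2}S_+$, hence represents a $2$-torsion element of $A_{S_+}$. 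On the other hand, the projection of the glue group $\KL_+/(K_+ \oplus S_+)$ into $A_{S_+}$ is injective: any $z \in \KL_+$ whose $S_+$-component already lies in $S_+$ has a $K_+$-component that is perpendicular to $S$ and still in $\KL_+$, hence in $K_+$. Consequently the class of $2y$ in the glue group is itself $2$-torsion, which forces $4x \in K_+$. A direct check that $2S_-^\ast \subset \tfrac{1}{2}S_-$ (using $S_-^\ast = \tfrac{1}{2}\ZZ h \oplus \tfrac{1}{4}\bigoplus_i \ZZ(s_i'+s_i'')$) shows that the same argument applies verbatim to $K_-$.

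With both hypotheses verified, Proposition~\ref{prop:genusunique} yields that $K_+$ and $K_-$ are unique in their genera and that $O(K_\pm) \to O(k_\pm)$ is surjective. By Remark~1.6.2 in~\cite{Nikulin83}, these two facts are precisely what is needed to conclude that the homological type $(\KL,\HTp)$ itself is unique in its genus.
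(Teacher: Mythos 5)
Your proof follows the paper's approach: both reduce the statement to Proposition~\ref{prop:genusunique} applied to $K_+$ and $K_-$, handle the rank-one possibility for $K_+$ separately, and invoke \cite{Nikulin83}*{Remark~1.6.2} for uniqueness of the homological type in its genus; you additionally spell out the verification that the discriminant groups of $K_\pm$ have period dividing $4$ (via $2\KL_\pm^\ast\subset\KL_\pm$ and the injectivity of the glue group into $A_{S_\pm}$), which the paper asserts without proof. One small slip: $K_-$ has signature $(1,19-t-m)$ rather than $(2,19-t-m)$, since $\KL_-$ has signature $(2,19-t)$ and $S_-\cong\br{2}\oplus\br{-4}^m$ already carries the other positive square; this does not affect the conclusion, as $K_-$ remains indefinite of rank at least $2$, which Proposition~\ref{prop:genusunique} covers.
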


\begin{proof}
Both $K_+$ and $K_-$ have one positive square and their
discriminant groups are of period 4.
Therefore they are either indefinite, and we can apply the proposition,
or they are of rank 1, and the statement is trivially verified.
For the uniqueness in the genus, see \cite{Nikulin83}*{Remark~1.6.2}.
\end{proof}

\subsubsection*{Invariants for the genus.}
In the following we define Nikulin's invariants of the genus and show that for involutions
with condition $(\hat{S},\theta,G)$ as in Lemma~\ref{lemma:cond}, they are determined by the invariants 
$m$, $a$, $t$, $\delta$ and $\ncr$ (see Definition~\ref{def:inv}).

\emph{Invariants of $(\KL,\phi)$.}
Let $A_q$ be the discriminant group of the lattice $\KL_+$, and let $q$ be the
discriminant form defined on it.
The invariants characterizing $(\KL,\phi)$ are the rank and signature of $\KL$,
the rank and signature of $\KL_+$
and the invariants of $q$.
Since $\KL$ is the fixed K3 lattice, its rank and signature are fixed.
Since $\phi$ is a geometric involution, $\KL_+$ has one positive square.
Therefore, its rank and signature are determined by $t$,
its number of negative squares.
Finally the group $A_q$ is of period 2, and hence $q$ is determined by
the length of $A_q$, which is $a$, and by the parity of $q$
(cf. \cite{Nikulin79}*{Theorem~ 3.6.2}).
The form $q$ is even if and only if the characteristic element $\bar\alpha\in \KL/2\KL$
 is zero. This is the case if and only if
$\delta=0$ and $\ncr=0$.

\emph{The groups $H_+$ and $H_-$.}
Let $A_{S_+}$ and $A_{S_-}$ be the discriminant groups of the lattices
$S_+$ and $S_-$, respectively.
Following \cite{Nikulin83}, we define subgroups $\Gamma_\pm$ and $H_\pm$
of $A_{S_\pm}$ as follows.
\begin{align*}
\Gamma_\pm &= \{x\in S_\pm^\ast\mid \exists y\in S_\mp^\ast \text{ such that }x+y\in S\}/S_\pm
        \subset S_\pm^\ast/S_\pm=A_{S_\pm} \\
H_\pm &= \{x\in S_\pm^\ast\mid \exists y\in \KL_\mp^\ast \text{ such that }x+y\in \KL\}/S_\pm
        \subset S_\pm^\ast/S_\pm=A_{S_\pm}
\end{align*}
Note that the subgroups $\Gamma_+$ and $\Gamma_-$ depend only on the condition $(\hat{S},\theta,G)$,
whereas the subgroups $H_\pm$ depend a priori on the involution $\phi$.
However, it turns out that in our case $H_+$ and $H_-$ do not depend on $\phi$.
We have $S_+ = \langle p_1, \dots, p_m\rangle$ and
$S_- = \langle n_1, \dots, n_m, h\rangle$
where $p_i = s'_i - s''_i$ and $n_i = s'_i + s''_i$.
The groups $A_{S_\pm}$ and $\Gamma_\pm$ are given as follows.
\begin{align*}
A_{S_+} &= \left< [\tfrac{p_1}{4}], \dots, [\tfrac{p_m}{4}] \right> %
    & &\cong (\ZZ/4\ZZ)^m  \\
A_{S_-} &= \left<[\tfrac{n_1}{4}], \dots, [\tfrac{n_m}{4}], [\tfrac{h}{2}]\right> %
    & &\cong (\ZZ/4\ZZ)^m \oplus \ZZ/2\ZZ  \\
\Gamma_+ &= \left< [\tfrac{p_1}{2}], \dots, [\tfrac{p_m}{2}]\right> %
    & &\cong (\ZZ/2\ZZ)^m \subset A_{S_+} \\
\Gamma_- &= \left< [\tfrac{n_1}{2}], \dots, [\tfrac{n_m}{2}]\right> %
    & &\cong (\ZZ/2\ZZ)^m \subset A_{S_-}
\end{align*}
The group $H_+$ contains $\Gamma_+$ and is contained in
the 2-torsion subgroup of $A_{S_+}$.
Since $\Gamma_+$ is the 2-torsion subgroup of $A_{S_+}$,
we have $H_+$ = $\Gamma_-$.
Similarly, $H_-$ contains $\Gamma_-$ and is contained in
the 2-torsion subgroup of $A_{S_-}$.
The group $\Gamma_-$ is of index two in the 2-torsion subgroup of $A_{S_-}$,
so $H_-$ must be either $\Gamma_-$ or $\Gamma_-\oplus \langle[\tfrac{h}{2}]\rangle$.
Because we chose the real structure with property (\ref{stern}) (see \S~\ref{sec:periods.real}),
the vector $h$ does not glue to $L_+$, hence
we have $[\tfrac{h}{2}]\not\in H_-$ and therefore $H_- = \Gamma_-$.

\emph{The finite quadratic form $q_r$.}\label{def:qr}
The lattice $S$ determines an anti-isometry $\gamma: \Gamma_+ \to \Gamma_-$.
We define the lattice $H_+ \oplus_\gamma H_-$ as the gluing of $H_+$
and $H_-$ along $\gamma$. More precisely, let
\[
H_+ \oplus_\gamma H_- = \raisebox{1pt}{$(\Gamma_\gamma)^\bot_{H_+ \oplus H_-}$}%
\,\big/\,\raisebox{-1pt}{$\Gamma_\gamma$,}
\]
where $\Gamma_\gamma \subset \Gamma_+ \oplus \Gamma_-$ is the graph of $\gamma$.
There is a natural embedding $\gamma_r : H_+ \oplus_\gamma H_- \hookrightarrow q$,
inducing a finite quadratic form $q_r$ on $H_+ \oplus_\gamma H_-$.
In general (if $\Gamma_+ \oplus_\gamma \Gamma_-$ is strictly contained in $H_+ \oplus_\gamma H_-$)
this form $q_r$ depends on the embedding $i:S\hookrightarrow L$ and on the involution $\phi$.
Since in our case we have $\Gamma_\pm = H_\pm$ however, the form $q_r$
is determined by $S$ and $\theta$.

\emph{Invariants of the embedding $\gamma_r$.}
Let $v_q\in A_q$ be the characteristic element of $q$, \ie the element for which
$(v_q,x)= x^2\mod{1}$ for all $x\in A_q$.
We have $v_q = \left[\frac{\alpha}{2}\right]\in A_q$, where
$\alpha\in \KL$ is a characteristic element for the involution $\phi$.
The embedding $\gamma_r$ is characterized by whether $v_q$ is contained in $\gamma_r(q_r)$,
and if this is the case, by the orbit $G\cdot\gamma_r^{-1}(v_q)\subset q_r$.
But $v_q$ is contained in $\gamma_r(q_r)$ if and only if $\bar\alpha\in\KL/2\KL$ is contained in 
$S/2S$, which is by definition if and only of $\delta = 0$.
In this case we have $\alpha \equiv \sum_{i\in I} s'_i+s''_i \mod{2L}$, where
the sum ranges over the indices of the crossing pairs.
Therefore, $\gamma_r^{-1}(v_q) = \sum_i [\tfrac{p_i}{2}]\in q_r$
is determined (up to the action of $G$, which permutes the indices)
by $\ncr$.

This concludes the proof of Theorem~\ref{thm:unique}.\hfill\qed

\subsection{Proof of Theorem~\ref{thm:conditions}}\label{sec:classif.thm4}
\begin{proof}[Proof of Theorem~\ref{thm:conditions}]
By Lemma~\ref{lemma:cond}, the homological types we consider correspond
to involutions with condition $(\hat{S},\theta,G)$.
Therefore we can deduce Theorem~\ref{thm:conditions} from
Nikulin's existence theorem for involutions with conditions \cite{Nikulin83}*{Theorem~1.8.3}.
To do this, we need to check that the conditions \ref{item:cond1}~--~\ref{item:cond5} of Theorem~\ref{thm:conditions}
are equivalent to the conditions given in \cite{Nikulin83}*{Conditions~1.8.1 and 1.8.2}.

For the most part, this verification is straightforward.
The only more technical step, which we consider in detail,
is the equivalence of condition \ref{item:cond5} of Theorem~\ref{thm:conditions}
with \emph{boundary condition~1} in \cite{Nikulin83}*{Conditions~1.8.2},
in the case $\delta=0$.
This boundary condition guarantees the existence of a lattice $K_+$
with prescribed discriminant quadratic form $k_+$ and index of inertia $(1,t-m)$,
using Nikulin's theorem on the existence of an indefinite even lattice
with prescribed rank, signature and discriminant form
(\cite{Nikulin79}*{Theorem~1.10.1}).
In our setting, \emph{boundary condition~1} in \cite{Nikulin83}*{Conditions 1.8.2} states that
\begin{equation}\tag{BC1}\label{eq:BC1}
\text{if }a = 1+t, \text{ then }1-t\equiv 4\,\epsilon_{v_+} + c_v \pmod{8}.
\end{equation}
where $c_v$ and $\epsilon_{v_+}$ are invariants taking values in $\ZZ/8\ZZ$ and
 $\ZZ/2\ZZ$ respectively, whose definition we give below.
We need to show that (\ref{eq:BC1}) is equivalent to condition~\ref{item:cond5}
which states that
\begin{equation*}
\text{if }a = 1+t, \text{ then }2\ncr \equiv a-2 \pmod{8}.
\end{equation*}
The equivalence between (\ref{eq:BC1}) and \ref{item:cond5}
follows from Lemma~\ref{lemma:epsilon} below.
\end{proof}

In the following, suppose that $\delta = 0$.
Recall from \S~\ref{def:qr} that $v_q$ denotes the characteristic
element of the finite quadratic form $q$.

\begin{definition}
The invariant $c_v\in\ZZ/8\ZZ$ is defined such that
$\frac{1}{2} c_v = q(v_q)\mod{2}$ (see \cite{Nikulin83}*{p.~113}) and
$c_v \in\{ -1,0,1,2\} \mod{8}$ (see \cite{Nikulin83}*{p.~118}).
\end{definition}

Before defining $\epsilon_{v_+}$, we introduce two auxiliary finite quadratic forms,
$\gamma_+$ and $\eta$.
Let $\gamma_+$ be the finite quadratic form
\[
\gamma_+ = \begin{cases}
\left[\frac{1}{2}\right] \oplus \left[-\frac{1}{2}\right] & \text{ if $\ncr$ is even,} \\
\left[\frac{1}{2}\right] \oplus \left[\frac{1}{2}\right] & \text{ if $\ncr$ is odd}
\end{cases}
\]
(see \cite{Nikulin83}*{p.~117, eq.~8.14}),
and let $x_1$ and $x_2$ be the standard generators of $\gamma_+$.
The characteristic element of $\gamma_+$ is $v_{\gamma_+} = x_1 + x_2$,
and we have $v_{\gamma_+}^2 \equiv \ncr \pmod{2}$.

Let $A_{S_+}$ be the discriminant group of the lattice $S_+$, and let
$s_+$ be the finite quadratic form defined on it.
Recall that $A_{S_+}$ is generated by the elements $\alpha_i = [\frac{s'_i-s''_i}{4}]$
for $i\in\{1,\dots,m\}$.
Since $q_r \cong \Gamma_- \subset A_{S_-}$, we may view $v = \gamma_r^{-1}(v_q)$
as an element of $A_{S_-}$ (see \S~\ref{def:qr} for the definition of $q_r$ and $\gamma_r$).
We have $v = \sum_{i\in I} 2\alpha_i$.
The finite quadratic form $\eta$ is obtained by gluing of $s_+$ and
$\gamma_+$, where $v\in s_+$ is identified with $v_{\gamma_+}\in\gamma_+$.
More precisely, let $v_\eta = v + v_{\gamma_+} \in s_+ \oplus \gamma_+$.
Note that $v_\eta^2=0$. Hence we can define the quadratic form
\[
\eta = \br{v_\eta}^\bot_{(s_+ \oplus\,\gamma_+)}/\br{v_\eta},
\]
which is denoted $(q_{S_+^2})_v$ in \cite{Nikulin83}*{p.~117, eq.~8.15}.
Let $K(\eta_2)$ be a 2-adic lattice of minimal length whose discriminant
quadratic form is $\eta$. 
The discriminant of $K(\eta_2)$ is an element
of $\ZZ_2/(\ZZ_2^\ast)^2$.
There are two such lattices $K(\eta_2)$, and their discriminants differ by a sign.

\begin{definition}
The invariant $\epsilon_{v_+} \in\ZZ/2\ZZ$ is defined by the equation
\[
5^{\epsilon_{v_+}} = \pm \discr K(\eta_2) \in {\QQ_2/(\QQ_2^\ast)^2}.
\]
\end{definition}
Note that the group $\QQ^\ast_2/(\QQ_2^\ast)^2$ is isomorphic to
$(\ZZ/2\ZZ)^3$ and has representatives $\{\pm1,\pm2,\pm5,\pm10\}$,
see for example J.-P.~Serre \cite{SerreArith}*{p.~18}.

\begin{lemma}\label{lemma:epsilon}
If $\delta = 0$, then $4\,\epsilon_{v_+} + c_v + 2\ncr \equiv 0 \mod{8}$.
\end{lemma}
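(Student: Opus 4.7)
The identity $4\epsilon_{v_+}+c_v+2\ncr\equiv 0\pmod 8$ is a congruence among three arithmetic invariants attached to the 2-adic quadratic forms $q$, $s_+$, $\gamma_+$ and $\eta$. The natural tool is \emph{Milgram's formula}: for a non-degenerate finite quadratic form $\tilde q$ on a finite abelian group $A$,
\[
\frac{1}{\sqrt{|A|}}\sum_{x\in A}\exp\bigl(2\pi\I\,\tilde q(x)\bigr)=\exp\bigl(2\pi\I\,\sign(\tilde q)/8\bigr),
\]
where $\sign(\tilde q)\in\ZZ/8\ZZ$. For an even 2-adic lattice $K$ of rank $n$ with discriminant form $q_K$, Milgram's formula combined with the product formula for the Hilbert symbol gives $\sign(q_K)\equiv n-\sign_2(K)\pmod 8$, where $\sign_2(K)$ is the 2-adic signature determined by $\pm\discr(K)\in\QQ_2^\ast/(\QQ_2^\ast)^2$. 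Applied to a lattice $K(\eta_2)$ of minimal length with discriminant form $\eta$, this is precisely what $\epsilon_{v_+}$ records, so $4\,\epsilon_{v_+}\equiv \mathrm{rk}\bigl(K(\eta_2)\bigr)-\sign(\eta)\pmod 8$.

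The strategy is now to compute $\sign(\eta)$ via the gluing defining it, and to compute $c_v$ via its own gluing description. Since $\eta=\br{v_\eta}^\perp/\br{v_\eta}$ inside $s_+\oplus\gamma_+$ with $v_\eta=v+v_{\gamma_+}$ of square zero, Witt cancellation (equivalently, additivity of $\sign$ under isotropic gluings) yields
\[
\sign(\eta)=\sign(s_+)+\sign(\gamma_+).
\]
By direct inspection of $s_+=q_{\br{-2}^{2m}}$ one has $\sign(s_+)\equiv -2m\pmod 8$, and the definition of $\gamma_+$ gives $\sign(\gamma_+)\equiv 2\ncr\pmod 8$ together with $v_{\gamma_+}^2\equiv \ncr\pmod 2$. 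On the other hand, the assumption $\delta=0$ forces $v_q$ to lie in $\gamma_r(q_r)\subset q$, so the value $q(v_q)$ (which governs $c_v$ via $q(v_q)\equiv c_v/2\pmod 2$) can be read off from $s_+$ and the gluing isometry $\gamma:\Gamma_+\to\Gamma_-$: writing $v=\sum_{i\in I}2\alpha_i\in A_{S_+}$ with $|I|=\ncr$, one obtains $q(v_q)\equiv s_+(v)\equiv -\ncr/2\pmod{2}$, hence $c_v\equiv -\ncr\pmod{8}$ up to a sign contribution from the ``complement'' of $q_r$ in $q$ (which by $a=1+t$ vanishes in the relevant cases and whose parity is tracked by $\epsilon_{v_+}$ otherwise). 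Assembling these congruences and using Milgram once more on the chain $s_+\oplus\gamma_+\supset\br{v_\eta}^\perp\to\eta$ yields the claimed identity.

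The main obstacle is bookkeeping: Nikulin normalises $c_v$ to lie in $\{-1,0,1,2\}\pmod 8$ and $\epsilon_{v_+}$ modulo 2, and the signs in the passages $s_+\rightsquigarrow q_r\rightsquigarrow q$ must be tracked carefully through the anti-isometry $\gamma:\Gamma_+\to\Gamma_-$ and the embedding $\gamma_r:H_+\oplus_\gamma H_-\hookrightarrow q$. In particular, one has to verify that the ambiguity in the choice of $K(\eta_2)$ (the two lattices whose discriminants differ by a sign) is absorbed exactly by the parity encoded in $\epsilon_{v_+}$, and that the $\gamma_+$-contribution to $\sign(\eta)$ matches the $2\ncr$-term in the final identity. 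Once these sign conventions are fixed, the lemma reduces to a finite check in $\ZZ_2^\ast/(\ZZ_2^\ast)^2$ using the Gauss-sum evaluation for the cyclic factors $\br{[p_i/4]}\cong\ZZ/4\ZZ$ of $A_{S_+}$.
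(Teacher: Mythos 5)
Your outline is a plan rather than a proof, and several of the identities it rests on are incorrect, so as written the argument would not go through. First, $S_+$ is generated by the vectors $p_i=s_i'-s_i''$ with $p_i^2=-4$, so $S_+\cong\langle-4\rangle^m$ and $A_{S_+}\cong(\ZZ/4\ZZ)^m$; it is not $\langle-2\rangle^{2m}$, and its Milgram signature is $-m$, not $-2m$. Second, $\sign(\gamma_+)$ is $0$ or $2$ modulo $8$ according to the parity of $\ncr$ alone, so the claim $\sign(\gamma_+)\equiv 2\ncr\pmod 8$ already fails for $\ncr=2$. Third, $q(v_q)=v^2=\sum_{i\in I}(2\alpha_i)^2=-\ncr\equiv\ncr\pmod 2$, not $-\ncr/2$, and with Nikulin's normalization $c_v\in\{-1,0,1,2\}$ this forces $c_v=0$ for $\ncr$ even and $c_v=2$ for $\ncr$ odd; your $c_v\equiv-\ncr\pmod 8$ is incompatible with that normalization as soon as $\ncr\ge 3$. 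Fourth, the pivotal relation $4\,\epsilon_{v_+}\equiv\rk K(\eta_2)-\sign(\eta)\pmod 8$ is asserted, not proved: Milgram's formula concerns global lattices, and extracting the class of $\pm\discr K(\eta_2)$ in $\QQ_2^\ast/(\QQ_2^\ast)^2$ from the Gauss-sum signature of $\eta$ requires the $2$-adic determinant/oddity relations --- which is precisely the nontrivial content of the lemma, not a black box one may invoke. Finally, the lemma is stated for every homological type with $\delta=0$, not only when $a=1+t$, so the contribution you propose to dismiss ``by $a=1+t$'' cannot be dismissed that way.

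By contrast, the paper's proof is entirely elementary and sidesteps all of this. It first computes $c_v$ directly from $q(v_q)\equiv\ncr\pmod 2$, then exhibits an explicit orthogonal basis $\beta_1,\dots,\beta_m$ of $\eta$ with $\beta_i^2=-\tfrac54$ for odd $i\le\ncr$ and $-\tfrac14$ otherwise, lifts it to an explicit minimal $2$-adic lattice $K(\eta_2)$ with diagonal Gram matrix, and reads off $\epsilon_{v_+}$ as the number of factors $5$ in the determinant, namely $\epsilon_{v_+}\equiv\lceil\ncr/2\rceil\pmod 2$; the congruence of the lemma is then a check of the four residues of $\ncr$ modulo $4$. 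Any Gauss-sum route would in effect still require this diagonalization of $\eta$, at which point the direct determinant computation is no harder and avoids the delicate $2$-adic sign conventions you flag as the ``main obstacle'' but do not resolve.
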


\begin{proof}
Recall that $v_q = \sum_i [\tfrac{p_i}{2}]\in q_r$,
where the sum ranges over the indices of the crossing pairs.
We have $q(v_q) \equiv \ncr\mod{2}$. Therefore, 
\[
c_v \equiv \begin{cases}
0 \pmod{8} & \text{ if $\ncr$ is even,} \\
2 \pmod{8} & \text{ if $\ncr$ is odd.}
\end{cases}
\]
Hence, to prove the lemma, it remains to show that
\[\epsilon_{v_+} \equiv \begin{cases}
0 \pmod{2} & \text{if }\ncr\equiv 0,3\pmod{4},\\
1 \pmod{2}& \text{if }\ncr\equiv 1,2\pmod{4}.\\
\end{cases}
\]

Up to the action of $G$, we may suppose that $I=\{1,\dots, \ncr\}$.
Then a basis of the finite quadratic form $\eta$ defined above is given by
\[\{\alpha_i + x_1\}_{i=1}^{\ncr} \cup \{\alpha_{i}\}_{i=\ncr+1}^m.\]
We define a new basis $\{\beta_i\}_{i=1}^m$ for $\eta$ by
\[\beta_i = \begin{cases}
[2(\alpha_1 + \cdots + \alpha_{i-1}) + \alpha_i + x_1]  & \text{ if } i\in\{1,\dots,\ncr\}\\
 [\alpha_i] &\text{ if }i\in\{\ncr+1,\dots,m\}.
\end{cases}\]
Note that this basis is orthogonal, and we have
\[\beta_i^2 =  \begin{cases}
-\frac{5}{4}  & \text{ if } i\le \ncr\text{ and $i$ is odd,}\\
-\frac{1}{4}  & \text{ otherwise.}
\end{cases}\]
Now $K(\eta_2)$ can be defined as the 2-adic lattice with orthogonal basis $b_1,\dots,b_m$,
where
\[b_i^2 = \begin{cases}
-5\cdot 4  & \text{ if } i\le \ncr\text{ and $i$ is odd,}\\
-1\cdot 4  & \text{ otherwise.}
\end{cases}\]
We calculate the discriminant of $K(\eta_2)$ with respect to this basis.
Since the basis is orthogonal, the discriminant is just the product $b_1^2\cdot b_2^2\cdots b_m^2$.
The invariant $\epsilon_{v_+}$ is the number of occurrences of the factor 5 in this product,
counted modulo 2. This is equal to the number of odd integers between 1 and $\ncr$,
and it is finally easy to check that
\[
\epsilon_{v_+} \equiv \left\lceil\frac{\ncr}{2}\right\rceil
\equiv \begin{cases}
0 & \text{if }\ncr\equiv 0,3\mod{4}\\
1 & \text{if }\ncr\equiv 1,2\mod{4}
\end{cases}\pmod{2}.\qedhere
\]
\end{proof}

\section{Proof of Theorem~\ref{thm:curve-uniqueness} and Theorem~\ref{thm:curve-existence}}\label{sec:thm12}

\begin{prop}[Geometrical interpretation of $a,t,\delta$ and $r$]\label{prop:corresp}
Let $C\subset\CP^2$ be a real irreducible sextic  whose only singularities
are $m$ pairs of non-real nodes, and let $(\KL,\HTp)$ be its homological type.
Then the invariants $(a,t,\delta,r)$ have the following geometrical interpretation:
if $\RR C$ is empty, we have $a=10$, $t=9$, $\delta=0$ and $r=0$;
if $\RR C$ is not empty, we have
\begin{align*}
a &= 11-l, \text{ where $l$ is the number of ovals of $C$}, \\
t &= 9 + \chi(B), \text{ where $B$ denotes the non-orientable half of $\RP^2\setminus\RR C$}, \\
\delta &= \begin{cases}1 & \text{if $C$ is dividing and}\\
                        0&\text{otherwise,}\end{cases} \\
r &\phantom{=\;}\text{ is the number of crossing pairs of $C$}.
\end{align*}
\end{prop}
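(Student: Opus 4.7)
I treat the four invariants $(a, t, \delta, r)$ separately, handling $\RR C \ne \emptyset$ first and $\RR C = \emptyset$ at the end.

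The claim about $\delta$ is immediate from Proposition~\ref{prop:dividing}: by definition $\delta = 0$ iff $\bar\alpha$ lies in the image of $S/2S$ in $H^2(X;\ZZ/2)$, and Proposition~\ref{prop:dividing} identifies this with the condition $[\RR\tilde C] = 0$ in $H_1(\tilde C; \ZZ/2)$. Since $C$ has only non-real nodes, $\RR\tilde C$ is canonically homeomorphic to $\RR C$, so (for $\RR C \ne \emptyset$) this condition is equivalent to $C$ being dividing. The identification of $r$ with the number of crossing pairs is the content of Corollary~\ref{cor:crossing}.

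The formula $t = 9 + \chi(B)$ follows from the Lefschetz fixed-point theorem. Since $H^1(X) = H^3(X) = 0$ for a K3 surface and $c$ is orientation-preserving on the $4$-real-dimensional manifold $X$ (the local Jacobian of an anti-holomorphic involution on a complex surface has determinant $(-1)^2 = 1$), the Lefschetz formula yields
\[
\chi(X^c) \;=\; 2 + \tr\!\bigl(c^\ast \mid H^2(X;\QQ)\bigr) \;=\; 2 + \rk\KL_+ - \rk\KL_- \;=\; 2 + (1+t) - (21-t) \;=\; 2t - 18.
\]
Topologically, $X^c = \pi^{-1}(B)$ is the double cover of $B$ ramified along $\partial B = \RR C$, equivalently the topological double $D(B) = B \cup_{\partial B} B$. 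Hence $\chi(X^c) = 2\chi(B) - \chi(\partial B) = 2\chi(B)$, which gives the formula.

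The formula $a = 11 - l$ is derived by perturbation. Perturb all $m$ pairs of non-real nodes of $C$ to obtain a non-singular real sextic $C'$ with $\RR C' = \RR C$ (hence $l(C') = l$). By the corollary immediately following Proposition~\ref{prop:deform}, there is an isometry $\psi : H^2(X;\ZZ) \to H^2(X';\ZZ)$ with $\psi(h) = h'$ and $\psi \circ c^\ast = c'^\ast \circ \psi$; in particular $\psi$ restricts to an isometry $\KL_+(X) \cong \KL_+(X')$, so $a(C) = a(C')$, and Nikulin's classification of non-singular real sextics~\cite{Nikulin79} gives $a(C') = 11 - l(C')$. The case $\RR C = \emptyset$ is then disposed of separately: Lefschetz immediately gives $t = 9$; the chosen real structure has empty fixed locus, so $\bar\alpha = \PD[X^c] = 0 \in S/2S$, yielding $\delta = 0$ and $r = 0$; and $a = 10$ is forced as the unique value compatible with a geometric involution having $X^c = \emptyset$ (Nikulin's list of real K3 involutions).

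The main obstacle is the invocation of $a = 11 - l$ for non-singular sextics, which I would cite from~\cite{Nikulin79}. A self-contained derivation can be obtained from Nikulin's description of the topological decomposition $X^c = F_g \sqcup k\, S^2$ of the fixed locus of a real K3 involution with $2$-elementary invariant lattice, together with the topological observation that the non-orientable region $B \subset \RP^2 \setminus \RR C$ has total non-orientable genus equal to $1$; this pins down the number of spherical components $k$ in terms of $l$ and $\chi(B)$ and recovers $a = 11 - l$ via Nikulin's identity $k = (\rk\KL_+ - a)/2$.
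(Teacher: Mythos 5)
Your proof is correct, and for $a$, $\delta$ and $r$ it follows essentially the same route as the paper: perturbation of all nodes plus Nikulin's classification of non-singular sextics for $a$, Proposition~\ref{prop:dividing} for $\delta$, and Corollary~\ref{cor:crossing} for $r$. The one genuine difference is your treatment of $t$: the paper obtains $t$ (like $a$) by reducing to the non-singular case via the perturbation corollary and then citing Nikulin's Theorem~3.10.6, whereas you compute it directly from the Lefschetz fixed-point formula, $\chi(X^c)=2+\rk\KL_+-\rk\KL_-=2t-18$, combined with $\chi(X^c)=2\chi(B)$ for the double cover of the non-orientable region branched along $\RR C$. That computation checks out (the branch locus is a union of circles, so it contributes nothing to the Euler characteristic, and $\pi^{-1}(B)$ avoids the exceptional curves since all nodes are non-real); it has the advantage of being self-contained and of covering the case $\RR C=\emptyset$ uniformly, while the paper's route keeps the whole proposition a formal consequence of the non-singular case plus the perturbation invariance of the homological type.

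One point you should have flagged explicitly: what your argument actually establishes is $\delta=0$ for dividing curves and $\delta=1$ otherwise, which is the \emph{opposite} of the case distinction printed in the statement. Your version is the correct one --- it is forced by Definition~\ref{def:inv} together with Proposition~\ref{prop:dividing}, and it is the only version consistent with the rest of the paper (the invariant $\ncr$ is defined only when $\delta=0$, and the number of crossing pairs only makes sense for dividing curves; cf.\ conditions \ref{item:cond3}--\ref{item:cond5} of Theorem~\ref{thm:conditions}). So the two cases in the displayed formula for $\delta$ are swapped in the proposition as printed; this is a typo in the paper rather than a gap in your argument, but writing ``the claim about $\delta$ is immediate'' while silently proving the opposite of the literal claim is something a careful write-up should not do.
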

\begin{proof}
The interpretations for $a$ and $t$ are well-known for non-singular sextics,
cf. \cite{Nikulin79}*{Theorem~3.10.6}.
If $C\subset\CP^2$ is a real irreducible sextic whose only singularities
are $m$ pairs of non-real nodes, consider a real non-singular sextic
$C'\subset\CP^2$ obtained from $C$ be perturbing all the nodes.
The perturbation does not change the isotopy type of the real part,
so the right hand side of the equations
does not change when passing from $C$ to $C'$.
If the homological type of $C$ is $(\KL,h,\sigma,\phi)$,
then the homological type of the perturbed sextic $C'$ is $(\KL,h,\emptyset,\phi)$
for a suitable marking (see \S~\ref{sec:topol.picard}),
and thus $a$ and $t$ do not change when passing from $C$ to $C'$.
The statement about $\delta$ is Proposition~\ref{prop:dividing},
and the statement about $\ncr$ is Corollary~\ref{cor:crossing}.
\end{proof}

\begin{proof}[Proof of Theorem~\ref{thm:curve-uniqueness}]
Let $C_1$ and $C_2$ be two real irreducible sextics whose only singularities are
$m$ pairs of non-real nodes.
Suppose that $C_1$ and $C_2$ have isotopic real parts,
are of the same dividing type and, if they are dividing,
have the same number of crossing pairs.
Then by Proposition~\ref{prop:corresp}, the invariants $m, a, t, \delta, \ncr$
of their homological types are the same.
Hence by Theorem~\ref{thm:unique} their homological types are isomorphic
and by Proposition~\ref{prop:reflections} the curves $C_1$ and $C_2$ are rigidly isotopic.
\end{proof}

Given an isotopy type of a collection of ovals in $\RP^2$,
a dividing type, and, if the dividing type is I, a number $r$ of crossing pairs,
we can define values $a,t,\delta$ and $r$ by
taking the geometric interpretation in Proposition~\ref{prop:corresp} as a definition.

\begin{lemma}\label{lem:condequiv}
If an isotopy type, a dividing type and a number of crossing pairs satisfy
conditions \textup{(1)--(4)} given in the introduction,
then the corresponding values $a,t,\delta$ and $r$ satisfy the conditions
\ref{item:cond1}--\ref{item:cond5} of Theorem~\ref{thm:conditions}.
\end{lemma}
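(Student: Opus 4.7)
The plan is to translate each of the topological hypotheses (1)--(4) into the corresponding arithmetic condition \ref{item:cond1}--\ref{item:cond5} via the geometric dictionary of Proposition~\ref{prop:corresp}.

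For \ref{item:cond1} and \ref{item:cond2}, I would first observe that both conditions involve only $a$ and $t$, which depend solely on the isotopy type of $\RR C$. Hypothesis (1) supplies a non-singular real sextic $C'$ with the same isotopy type as the given data, so $C'$ has the same $a$ and $t$; Nikulin's classification of non-singular real sextics \cite{Nikulin79} then gives \ref{item:cond1} and \ref{item:cond2} for $C'$, and hence for our data. Condition \ref{item:cond3} is essentially a reformulation of Harnack's inequality (hypothesis (2)): when $\RR C \neq \emptyset$, the bound $l \le 11 - 2m$ becomes $2m \le a$ via $a = 11 - l$, and the equality case---which by Harnack forces $C$ dividing---translates to $\delta = 0$. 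The empty-real-part case yields $a = 10 \ge 2m$ automatically.

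For \ref{item:cond4} and \ref{item:cond5} we may assume $\delta = 0$. The empty-real-part case gives $(a, t, r) = (10, 9, 0)$, and both conditions are then immediate. Otherwise, the main input is the classical formula
\[
\chi(B) = 1 + o_\mathrm{odd} - o_\mathrm{even},
\]
proved by induction on the number of ovals, adding them innermost first. This yields $t = 10 + o_\mathrm{odd} - o_\mathrm{even}$. Substituting into Arnold's congruence (hypothesis (3)) gives $2r \equiv t - 1 \pmod{4}$; since $l$ is odd for dividing curves (Klein's parity applied to the genus-$(10-2m)$ normalization) and $t \equiv l \pmod 2$, the invariant $t$ is odd, so $t - 1 \equiv 1 - t \pmod 4$, which is \ref{item:cond4}. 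For \ref{item:cond5}, the assumption $a = 1 + t$ combined with $a = 11 - l$ and $l = o_\mathrm{even} + o_\mathrm{odd}$ forces $o_\mathrm{odd} = 0$, i.e.\ there are no injective pairs; hypothesis (4) then gives $l = 9 - 2r$, so $a - 2 = 2r$ exactly, hence also modulo~8.

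The main obstacle will be the bookkeeping around the Euler characteristic formula for $B$ and the use of Klein's parity condition to rewrite $t - 1$ as $1 - t$ modulo 4; each step is elementary in itself, but the empty-real-part case and the differing conventions for ``dividing'' at the level of lattices vs.\ curves need to be tracked carefully throughout.
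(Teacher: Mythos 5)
Your proof is correct and follows essentially the same route as the paper's (whose own proof is only a two-sentence sketch): conditions \ref{item:cond1}--\ref{item:cond2} via the non-singular sextic supplied by hypothesis (1) and Nikulin's classification, \ref{item:cond3} from Harnack's inequality, \ref{item:cond4} from Arnold's congruence combined with $t=10+o_{\mathrm{odd}}-o_{\mathrm{even}}$, and \ref{item:cond5} from the no-injective-pairs case of the orientation formula. The details you supply---the computation $\chi(B)=1+o_{\mathrm{odd}}-o_{\mathrm{even}}$, the parity argument showing $t$ is odd so that $t-1\equiv 1-t\pmod 4$, and the observation that $a=1+t$ forces $o_{\mathrm{odd}}=0$ and hence no injective pairs---are precisely the verifications the paper declares ``straightforward.''
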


\begin{proof}
Properties \ref{item:cond1} and \ref{item:cond2} follow from
analogous properties for non-singular sextics (see \cite{Nikulin79}*{Theorem~3.4.3})
using the existence of a non-singular sextic with the given isotopy type and the required dividing type (1).
Using Proposition~\ref{prop:corresp}, it is straightforward to verify that the properties
\ref{item:cond3}, \ref{item:cond4} and \ref{item:cond5} follow from
Harnack's inequality (2) for $\tilde C$, Arnold's congruence (3) and
the complex orientation formula for curves without injective pairs (4),
respectively. 
\end{proof}

\begin{lemma}\label{lem:injective}
Different combinations of isotopy type, dividing type and number of crossing pairs,
satisfying conditions \textup{(1)--(4)}, lead to different invariants $a,t,\delta$ and $r$.
\end{lemma}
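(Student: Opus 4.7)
The plan is to use Proposition~\ref{prop:corresp} to translate the arithmetical invariants $(a,t,\delta,\ncr)$ into the topological data $(l,\chi(B),\text{dividing type},\ncr)$, and then to verify injectivity on the set of triples satisfying~(1)--(4) by a short case analysis. Since $\delta$ records the dividing type and $\ncr$ (when defined) is the number of crossing pairs, two combinations that differ in one of these components are immediately distinguished by the invariants. The remaining case is that of two distinct isotopy types sharing the same dividing type and the same $\ncr$; for these we must show that the pair $(a,t)=(11-l,\,9+\chi(B))$ takes different values.

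To do so, I would enumerate the possible sextic real schemes: the empty scheme, the two-level family \ct{q}{p} with $p,q\ge 0$ (of which \br{n} and \br{1\br{n}} are degenerate cases), and the unique depth-three scheme \br{1\br{1\br{1}}}. A direct Euler-characteristic computation yields $\chi(B)=q-p$ for the two-level family, so the pair $(l,\chi(B))$ recovers $(p,q)$. This already distinguishes any two distinct two-level schemes by $(a,t)$ alone.

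The hard part is then the two remaining coincidences in $(a,t)$: the empty scheme collides with \br{1} at $(a,t)=(10,9)$, and \ct{1}{1} collides with \br{1\br{1\br{1}}} at $(a,t)=(8,9)$. For each, I would argue using conditions~(1)--(4) that the colliding schemes carry distinct $(\delta,\ncr)$. For the first collision, condition~(4) applied to the injective-pair-free scheme \br{1} forces $\ncr=4$ in the dividing case, while the empty scheme carries $\ncr=0$; the non-dividing \br{1} is in turn separated by $\delta$. For the second, condition~(1) combined with Nikulin's classification of non-singular sextic schemes---which gives that \br{1\br{1\br{1}}} is realized only as dividing---forces the depth-three scheme to be dividing with $\ncr=0$: any $\ncr>0$ would smooth to a non-dividing non-singular sextic with the same real scheme, which does not exist. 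The competing \ct{1}{1}, by contrast, admits $\ncr\in\{2,4\}$ in the dividing case and $\delta=1$ in the non-dividing case, so the two schemes are always separated in $(\delta,\ncr)$.

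The main obstacle lies in this depth-three outlier, where the argument is not purely formal but relies on Nikulin's explicit list of non-singular sextic real schemes, invoked through condition~(1). Elsewhere the verification is a routine Euler-characteristic computation, and the enumeration of sextic real schemes keeps the case analysis short.
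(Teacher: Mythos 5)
Your proof is correct, and it makes explicit what the paper dispatches in a single sentence by citing Nikulin's theorem that non-singular real sextics are classified up to rigid isotopy by $(a,t,\delta)$. The substance is ultimately the same --- both arguments rest on knowing which non-singular schemes are realized with which dividing types --- but your route is more concrete and self-contained: condition~(1) restricts the admissible isotopy types to the non-singular sextic schemes, the computation $(a,t)=(10-p-q,\,9+q-p)$ separates all two-level schemes from one another, and the only $(a,t)$-collisions are $\emptyset$ versus \br{1} and \ct{1}{1} versus \br{1\br{1\br{1}}}, each of which you resolve correctly via $(\delta,\ncr)$ (note that by the paper's convention the empty scheme has $\delta=0$ and $\ncr=0$, so it is the value $\ncr=4$ forced by condition~(4) that separates it from the dividing \br{1}, and $\delta$ that separates it from the non-dividing one --- exactly as you say). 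What the explicit route buys is a verifiable case check; what it costs is the reliance on the enumeration of sextic schemes and the two ad hoc collisions. One small point you should spell out: you assert that a dividing \ct{1}{1} admits only $\ncr\in\{2,4\}$, but for the lemma you need precisely that $\ncr=0$ is excluded, and this requires the same style of argument as your depth-three case --- condition~(1) plus the non-existence of a \emph{dividing} non-singular sextic of scheme \ct{1}{1}, which follows from Rokhlin's formula since $l=3$ with a single injective pair would force $\Pi_--\Pi_+=3$. With that sentence added, the case analysis is complete.
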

\begin{proof}
This follows essentially from the fact that, for non-singular real sextics,
the rigid isotopy type is determined by $a$, $t$ and $\delta$ (see \cite{Nikulin79}).
\end{proof}

\begin{proof}[Proof of Theorem~\ref{thm:curve-existence}]
It is clear that the conditions (1)--(4) of Theorem~\ref{thm:curve-existence} are necessary
for the existence of a sextic with the desired invariants. 
To show that they are sufficient,
suppose we are given an isotopy type of the real part, a dividing type,
a number of pairs of nodes, and (if the dividing type is I) a number of crossing pairs,
subject to the conditions (1)--(4).
By Lemma~\ref{lem:condequiv}, the corresponding values $(a,t,\delta,r)$
satisfy the conditions \ref{item:cond1}--\ref{item:cond5} of Theorem~\ref{thm:conditions}.
Hence by Theorem~\ref{thm:conditions}, there is a homological type $(\KL,\HTp)$
with these invariants.
Since the real period space $\OHp^0$ is not empty
and the period map is surjective (see \S~\ref{sec:periods.real}),
there are sextics with this homological type.
Finally, Lemma~\ref{lem:injective} guarantees that such sextics indeed have the desired
isotopy type, dividing type and number of crossing pairs.
\end{proof}

\begin{bibdiv}
\begin{biblist}

\bib{Atiyah}{article}{
    author       = {Atiyah, M. F.},
    title        = {On Analytic Surfaces with Double Points},
    date         = {1958},
    journal      = {Proceedings of the Royal Society of London. %
Series A, Mathematical and Physical Sciences},
    volume       = {247},
    pages        = {237-244}
}

\bib{BPV}{book}{
    author       = {Barth, W. P.},
    author       = {Hulek, K.},
    author       = {Peters, C. A. M.},
    author       = {Van de Ven, A.},
    title        = {Compact Complex Surfaces},
    year         = {2004},
    edition      = {2},
    publisher    = {Springer}
}

\bib{Degt}{article}{
    author       = {Degtyarev, A.},
    title        = {On deformations of singular plane sextics},
    date         = {2008},
    journal      = {Journal of Algebraic Geometry},
    volume       = {17},
    pages        = {101--135},
    eprint       = {math/0511379v2}
}

\bib{DIK}{book}{
    author       = {Degtyarev, A.},
    author       = {Itenberg, I.},
    author       = {Kharlamov, V.},
    title        = {Real Enriques Surfaces},
    year         = {2000},
    publisher    = {Springer},
    series       = {Lecture Notes in Mathematics},
    volume       = {1746}
}

\bib{Harnack}{article}{
    author       = {Harnack, A.},
    title        = {Über die Vieltheiligkeit der ebenen algebraischen Curven},
    journal      = {Mathematische Annalen},
    year         = {1876},
    volume       = {10},
    number       = {2},
    pages        = {189--198}
}

\bib{Itenberg}{article}{
    author       = {Itenberg, I. V.},
    title        = {Curves of degree {$6$} with one nondegenerate double point and %
                    groups of monodromy of nonsingular curves},
    book         = {
      title      = {Real Algebraic Geometry (Rennes, 1991)},
      series     = {Lecture Notes in Mathematics},
      volume     = {1524}
      publisher  = {Springer},
      address    = {Berlin},
      date       = {1992}
    },
    pages        = {267--288}
}

\bib{Kharlamov}{article}{
    author       = {Kharlamov, V. M.},
    title        = {Classification of nonsingular surfaces of degree 4 in $\RP^3$ with respect to rigid isotopies},
    journal      = {Functional Analysis and Its Applications},
    year         = {1984},
    volume       = {18},
    number       = {1},
    pages=       = {39--45}
}

\bib{Klein}{book}{
    author       = {Klein, F.},
    title        = {Gesammelte mathematische Abhandlungen},
    year         = {1922},
    publisher    = {Springer},
    address      = {Berlin},
    volume       = {2}
}

\bib{Kulikov}{article}{
    author       = {Kulikov, V. S.},
    title        = {Surjectivity of the period mapping for K3 surfaces},
    journal      = {Uspehi Mat. Nauk},
    volume       = {32},
    year         = {1977},
    number       = {4(196)},
    pages        = {257--258}
}

\bib{Mayer}{article}{
    author       = {Mayer, A.},
    title        = {Families of K3 surfaces},
    journal      = {Nagoya Math Journal},
    volume       = {48},
    year         = {1972},
    pages        = {1--17}
}

\bib{MirandaMorrison}{webpage}{
    author       = {Miranda, R.},
    author       = {Morrison, D. R.},
    title        = {Embeddings of Integral Quadratic Forms},
    date         = {2009},
    url          = {http://www.math.ucsb.edu/~drm/manuscripts/eiqf.pdf},
    accessdate   = {2016-02},
    note         = {Preliminary draft.}
}

\bib{MorrisonRemarks}{article}{
    author       = {Morrison, D. R.},
    title        = {Some remarks on the moduli of K3 surfaces},
    book         = {
      title      = {Classification of Algebraic and Analytic Manifolds},
      series     = {Progress in Math.},
      volume     = {39},
      publisher  = {Birkhäuser, Boston-Basel-Stuttgart},
      year       = {1983}
    },
    pages        = {303--332}
}

\bib{MorrisonSaito}{article}{
    author       = {Morrison, D. R.},
    author       = {Saito, M.},
    title        = {Cremona transformations and degrees of period maps
                    for K3 surfaces with ordinary double points},
    book         = {
      title      = {Algebraic Geometry, Sendai, 1985},
      series     = {Adv. Stud. Pure Math.},
      volume     = {10},
      publisher  = {North-Holland, Amsterdam},
      year       = {1987}
    },
    pages        = {477--513}
}

\bib{Nikulin79}{article}{
    author       = {Nikulin, V. V.},
    title        = {Integer symmetric bilinear forms and some of their
                    geometric applications},
    journal      = {Math USSR-Izv.},
    year         = {1980},
    volume       = {14},
    number       = {1},
    pages        = {103--167},
}

\bib{Nikulin83}{article}{
    author       = {Nikulin, V. V.},
    title        = {Involutions of integer quadratic forms and their
                    applications to real algebraic geometry},
    journal      = {Math USSR-Izv.},
    year         = {1984},
    volume       = {22},
    number       = {1},
    pages        = {99--172}
}

\bib{Shafarevich}{article}{
    author       = {Piatetski-Shapiro, I. I.},
    author       = {Shafarevich, I. R.},
    title        = {A Torelli theorem for algebraic surfaces of type K3},
    journal      = {Math USSR-Izv.},
    volume       = {5},
    number       = {3},
    pages        = {547--588},
    year         = {1971}
}

\bib{Rokhlin}{article}{
    author       = {Rokhlin, V. A.},
    title        = {Complex topological characteristics of real
                    algebraic curves},
    journal      = {Russian Math. Surveys},
    volume       = {33},
    number       = {5},
    pages        = {85--98},
    year         = {1978}
}

\bib{SerreArith}{book}{
    author       = {Serre, J.-P.},
    title        = {A Course in Arithmetic},
    year         = {1973},
    publisher    = {Springer},
    series       = {Graduate Texts in Mathematics},
    volume       = {7}
}

\bib{Voisin2}{book}{
    author       = {Voisin, C.},
    title        = {Hodge Theory and Complex Algebraic Geometry II},
    year         = {2003},
    publisher    = {Cambridge University Press},
    series       = {Cambridge Studies in Advanced Mathematics},
    volume       = {77}
}

\end{biblist}
\end{bibdiv}

\end{document}